\numberwithin{equation}{section}
\newtheorem{theorem}{Theorem}[section]
\newtheorem{lemma}[theorem]{Lemma}
\newtheorem{proposition}[theorem]{Proposition}
\newtheorem{remark}[theorem]{Remark}
\newtheorem{definition}[theorem]{Definition}
\newtheorem{conjecture}[theorem]{Conjecture}
\newcommand{\al}{\alpha}
\newcommand{\be}{\beta}
\newcommand{\Ga}{\Gamma}
\newcommand{\de}{\delta}
\newcommand{\e}{\varepsilon}
\newcommand{\la}{\lambda}
\newcommand{\La}{\Lambda}
\newcommand{\si}{\sigma}
\newcommand{\vp}{\varphi}
\newcommand{\om}{\omega}
\newcommand{\cv}{\mathcal V}
\newcommand{\cu}{\mathcal U}
\newcommand{\cq}{\mathcal Q}
\newcommand{\cp}{\mathcal P}
\newcommand{\cl}{\mathcal L}
\newcommand{\co}{\mathcal O}
\newcommand{\cf}{\mathcal F}
\newcommand{\sq}{\mathscr Q}
\newcommand{\mr}{\mathring}
\newcommand{\wh}{\widehat}
\newcommand{\lp}{L^{p}}
\newcommand{\ZR}{\mathbb{R}}
\newcommand{\ZT}{\mathbb{T}}
\newcommand{\ZZ}{\mathbb{Z}}
\newcommand{\ZC}{\mathbb{C}}
\newcommand{\ZB}{\mathbb{B}}
\newcommand{\ZI}{\mathbb{I}}
\newcommand{\ZM}{\mathbb{M}}
\newcommand{\ti}{\tilde}
\newcommand{\Id}{{\bf 1}}
\newcommand{\cT}{{\mathcal T}}
\newcommand{\bq}{{\bf q}}
\begin{document}

\title{On the Bochner-Riesz operator in $\ZR^3$}
\author{Shukun Wu}

\address{
Shukun Wu\\
Department of Mathematics\\
University of Illinois at Urbana-Champaign\\
Urbana, IL, 61801, USA}

\email{shukunw2@illinois.edu}
\date{}

\begin{abstract}
We improve the Bochner-Riesz conjecture in $\ZR^3$ to $\max\{p,p/(p-1)\}\geq3.25$. Our main methods are the Bourgain-Guth broad-narrow argument in \cite{Bourgain-Guth-Oscillatory}, and the iterated polynomial partitioning used in \cite{Guth-restriction-R3} (implicitly) and \cite{Wang-restriction-R3}. The main novelty of this paper is a backward algorithm that emerges from the iterated polynomial partitioning we used. This algorithm helps us realize a geometric observation from the tangential contributions.
\end{abstract}
\maketitle

\section{Introduction}
\setcounter{equation}0
The purpose of this paper is to present a small improvement of the Bochner-Riesz conjecture in $\ZR^3$. Recall that for $\la>0$, the Bochner-Riesz multiplier of order $\la$ in $\ZR^n$ is defined by  
\begin{equation}
\label{B-R-operator}
    T^\la f(x)=\int_{\ZR^n}(1-|\xi|^2)^\la_+\wh{f}(\xi)e^{ix\cdot\xi}d\xi.
\end{equation}

\begin{conjecture}(Bochner-Riesz)
\label{Bochner-Riesz}
Assume $f\in L^p(\ZR^n)$ and $1\leq p\leq \infty$. Then
\begin{equation}
\label{B-R-conj}
    \|T^\la f\|_p\leq C_{p,\la}\|f\|_p
\end{equation}
for $\la>\la_{n,p}$, where the factor $\la_{n,p}$ is defined to be
\begin{equation}
\label{lambda-p}
    \la_{n,p}=\max\Big\{0,n\Big|\frac{1}{p}-\frac{1}{2}\Big|-\frac{1}{2}\Big\}.
\end{equation}
\end{conjecture}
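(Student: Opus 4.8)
The statement as phrased is the full Bochner--Riesz conjecture in every dimension and for every exponent, which remains open for $n\ge 3$ (for $n=2$ it is the classical Carleson--Sj\"olin theorem); what I can offer is the standard line of attack, carried as far as current technology allows, together with the precise point where it stalls. The plan is first to reduce \eqref{B-R-conj} to an oscillatory integral estimate. By duality one may assume $2\le p\le\infty$. I would decompose the multiplier $(1-|\xi|^2)_+^\la$ into a boundedly convergent sum of smooth pieces localized to the shells $1-|\xi|\sim 2^{-j}$, $j\ge 0$, and parabolically rescale the $j$-th piece into a Fourier multiplier adapted to a $2^{-j}$-neighborhood of the unit sphere (locally a paraboloid). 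Summing the resulting geometric series shows that \eqref{B-R-conj} for every $\la>\la_{n,p}$ would follow from the extension estimate $\|E_\delta g\|_{L^p(\ZR^n)}\lesssim_\e \delta^{-\e}\|g\|_p$ over a $\delta$-cap decomposition of the sphere, i.e.\ from the adjoint restriction conjecture for the sphere in $\ZR^n$. So the whole problem becomes one affinely invariant, local estimate for an oscillatory integral with curved phase.

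Next I would attack that extension estimate through the established hierarchy. For $p\ge \tfrac{2(n+1)}{n-1}$ it is the Stein--Tomas theorem and \eqref{B-R-conj} is classical; for smaller $p$ the tools are, in increasing strength, interpolation with the trivial $L^2$ bound, the C\'ordoba-type square function estimates, bilinear and then multilinear restriction (Tao; Bennett--Carbery--Tao) fed into the Tao--Vargas--Vega machinery, and finally polynomial partitioning (Guth; Hickman--Rogers; Wang). The driver for the strongest of these is the Bourgain--Guth broad--narrow dichotomy: split $E_\delta g$ pointwise into a \emph{broad} part, where the significant frequency caps are quantitatively transverse and which multilinear restriction controls, plus a \emph{narrow} part, where the caps cluster on a lower-dimensional variety and which one handles by parabolic rescaling and induction on scales. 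In $\ZR^3$ the step that buys the most is to iterate polynomial partitioning inside the broad part; the novelty here is to run a backward algorithm over that iteration, converting the geometric structure of the tangential wave packets into a quantitative gain.

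The hard part --- and the reason the full statement is still open for $n\ge 3$ --- is that in every such dimension the best available extension/Kakeya input is strictly weaker than what \eqref{lambda-p} demands: multilinear restriction loses a genuine power of the scale when descaled to a single linear estimate, and polynomial partitioning recovers only part of that loss, so the induction on scales cannot be driven down to the critical exponent $p=\tfrac{2n}{n-1}$. Realistically this strategy therefore proves \eqref{B-R-conj} only on the range of $p$ for which the sphere's extension estimate is presently known, and the honest content of the present work is to enlarge that range in $\ZR^3$ to $\max\{p,p/(p-1)\}\ge 3.25$ --- still short of the conjectural $\max\{p,p/(p-1)\}\ge 3$. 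Closing the remaining gap, in $\ZR^3$ or in higher dimensions, would require a new idea exactly at this juncture.
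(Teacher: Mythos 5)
This statement is the Bochner--Riesz conjecture itself, which the paper records only as background and does not prove (its actual result is the partial case $\max\{p,p/(p-1)\}\geq 3.25$ in $\mathbb{R}^3$), so there is no proof in the paper to compare against, and your proposal is right to treat the full conjecture as open for $n\geq 3$. Your sketch --- dyadic decomposition of the multiplier into $2^{-j}$-neighborhoods of the sphere, reduction to a local estimate for functions with Fourier support near the surface, the Bourgain--Guth broad--narrow dichotomy, and iterated polynomial partitioning with a backward algorithm in $\mathbb{R}^3$ --- is exactly the strategy the paper carries out in Sections 2--7 for its partial theorem, with the one caveat that the correct reduction is to the stronger neighborhood-multiplier estimate for the class $\mathbb{M}(R)$ (the paper's Theorem in Section 2), not literally to the adjoint restriction conjecture, since ``restriction implies Bochner--Riesz'' is itself only known in special situations (e.g.\ Carbery's result for paraboloids).
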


The Bochner-Riesz operator was introduced by Bochner in the 1930s, aiming to understand the radical convergence of Fourier transform. Since then, the Bochner-Riesz conjecture plays a crucial role in Fourier analysis. In the early 1970s, Carleson and Sj\"olin \cite{Carleson-Sjolin}, as well as Fefferman \cite{Fefferman-spherical}, settled the Bochner-Riesz conjecture in the plane. While for $\ZR^n$, $n\geq3$, the conjecture is widely open.

In higher dimensions, Tomas \cite{Tomas} proved that the Bochner-Riesz conjecture is true when $\max\{p,p/(p-1)\}\geq 2(n+2)/(n-1)$, via a $TT^\ast$ method. This result was improved by Bourgain \cite{Bourgain-Besicovitch} later in 1991, using new estimates for the Nikodym maximal function. After that, improvements have been made by several authors. See for instance,  \cite{Wolff-Kakeya}, \cite{Lee}, \cite{Bourgain-Guth-Oscillatory}. To the author's knowledge, in $\ZR^3$, the best result so far is due to Lee \cite{Lee} and \cite{Lee-Hormander}, who proved that the Bochner-Riesz conjecture is true when $\max\{p,p/(p-1)\}\geq 10/3$; In $\ZR^n$, $n\geq4$, the best results are given by Guth, Hickman and Iliopoulou \cite{G-H-I}. They in fact proved a more general result with H\"ormander-type operators, which automatically implies the Bochner-Riesz conjecture at the endpoints
\begin{eqnarray*}
&&\max\{p,p/(p-1)\}\geq2\frac{3n+1}{3n-3}\hspace{5mm}{\rm if}~n~{\rm is~odd,}\\[1ex]
&&\max\{p,p/(p-1)\}\geq2\frac{3n+2}{3n-2}\hspace{5mm}{\rm if}~n~{\rm is~even}.
\end{eqnarray*}

\vspace{3mm}

The main result of this paper is the following improvement of the Bochner-Riesz conjecture in $\ZR^3$. 
\begin{theorem}
\label{B-R-theorem-sharp}
Let $T^\la$ be the Bochner-Riesz operator defined in \eqref{B-R-operator}. Then the Bochner-Riesz conjecture \eqref{B-R-conj} holds when $n=3$, $\max\{p,p/(p-1)\}\geq 3.25$.
\end{theorem}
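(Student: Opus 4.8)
The plan is to follow the standard reduction of Bochner--Riesz to an oscillatory-integral estimate and then to run the iterated polynomial partitioning of \cite{Guth-restriction-R3} and \cite{Wang-restriction-R3}, with the new backward algorithm doing the work in the tangential case; the overarching engine is an induction on a scale parameter. For the reduction: by duality it suffices to treat $p\geq 2$, and interpolating with the trivial $L^2$ bound at $\la=0$ it suffices to prove \eqref{B-R-conj} at the critical value $\la=\la_{3,p}=1-3/p$ with an arbitrarily small loss $\la^\e$ in the constant. Expanding $(1-|\xi|^2)_+^{\la}$ into dyadic pieces supported in the $\de$-neighborhood of $S^2$ and parabolically rescaling each $\de^{1/2}$-cap, the $\de$-piece of $T^\la$ becomes a local estimate at scale $R=\de^{-1}$ for an extension-type operator over a small cap of $S^2$. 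Since partial restriction estimates for the paraboloid are not known to transfer to Bochner--Riesz, one must retain the curvature and work with the genuinely variable-coefficient object --- a H\"ormander-type oscillatory integral $T^\la g(x)=\int_{\ZR^2}e^{i\la\phi(x,\xi)}a(x,\xi)g(\xi)\,d\xi$ on $\ZR^3$ whose phase satisfies the standard nondegeneracy and positive-definiteness (curvature) conditions, as in \cite{Lee} and \cite{G-H-I}. After summing the dyadic pieces, Theorem~\ref{B-R-theorem-sharp} follows from the local bound $\|T^\la g\|_{L^p(\ZR^3)}\lesssim_\e\la^{\e-3/p}\|g\|_{L^\infty}$ for $p\geq 3.25$, which one proves by induction on $\la$.

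\emph{Broad--narrow.} Next, fix $K=K(\e)$ and partition the $\xi$-cap into $K^{-1}$-subcaps $\tau$. At each point $x$ either $|T^\la g(x)|$ is essentially governed by the subcaps lying in a thin slab around some line or plane (the \emph{narrow} case), or it is spread among $\gtrsim 2$ transverse subcaps (the \emph{broad} case); this is the Bourgain--Guth dichotomy of \cite{Bourgain-Guth-Oscillatory}. In the narrow case one rescales each subcap, returns to the same family of operators at the smaller scale $\la K^{-2}$, and closes via the induction on $\la$ --- the $K^{O(1)}$ losses are harmless because $p$ lies strictly above the threshold. Everything thus reduces to the $2$-broad estimate $\|\mathrm{Br}_2\,T^\la g\|_{L^p(\ZR^3)}\lesssim_\e\la^{\e}\|g\|_{L^p}$ for $p\geq 3.25$, where $\mathrm{Br}_2$ keeps only the contribution of pairs of transverse subcaps.

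\emph{Iterated polynomial partitioning.} I would prove the $2$-broad estimate by a double induction --- on the radius $R=\la$ and on the ``size'' of $g$ (the number of essential wave packets, or $\|g\|_2^2$) --- following \cite{Guth-restriction-R3} and \cite{Wang-restriction-R3}. On $B_R$, apply the polynomial ham-sandwich theorem to find $P$ of degree $D\sim D(\e)$ with $\ZR^3\setminus Z(P)$ cut into $\sim D^3$ cells, each carrying a $\sim D^{-3}$ share of $\int|\mathrm{Br}_2 T^\la g|^p$. By B\'ezout each $R^{1/2}$-wave packet (curved, in this setting) meets only $O(D)$ cells, so on a typical cell $g$ is much sparser and the inductive hypothesis yields a power gain: the \emph{cellular} case. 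Otherwise the norm concentrates in the $R^{1/2+\de}$-neighborhood of $Z(P)$ --- the \emph{algebraic} case --- and there one splits the wave packets into those \emph{tangent} to $Z(P)$ and those \emph{transverse} to it; the transverse ones have bounded overlap with the relevant slabs and are absorbed into the cellular estimate, while the tangent ones are the main term. One then recurses, polynomial-partitioning the tangent piece inside a neighborhood of $Z(P)$, then inside a neighborhood of a curve, and so on, generating a tree of algebraic sets of decreasing dimension.

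\emph{The backward algorithm, and the main obstacle.} The essential difficulty --- and the reason a direct iteration of \cite{Guth-restriction-R3} stalls at Lee's exponent $10/3$ --- is that each tangential step leaks a small loss, and these would accumulate along the tree. One must therefore harvest an extra geometric gain at every tangential node from the fact that wave packets tangent to a low-dimensional variety are far more constrained than a naive count suggests (their \emph{brooms} must fan out, in the terminology of \cite{Wang-restriction-R3}), which in $\ZR^3$ forces a favorable redistribution of the input mass across scales. The \emph{backward algorithm} is the bookkeeping device that makes this usable: rather than descending the tree and summing losses, one starts at the deepest, essentially one-dimensional tangential leaves --- where the estimate is immediate --- and reconstructs the bound at successive parent nodes, at each step trading the tangency constraint for a quantitative gain that cancels the local loss, so that the estimate telescopes back up to the root; after optimizing $D$, $K$, and $\de$, the induction closes exactly at $p=13/4$. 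I expect this last step to be the crux, for three reasons: (i) the backward recursion must be arranged so the gains genuinely telescope, rather than merely being controlled level by level; (ii) the curved wave packets interact with the algebraic varieties in a nontrivial way, so one needs transverse-equidistribution and nested-tube arguments in the spirit of \cite{G-H-I}, or a preliminary reduction to a sub-polynomial scale on which $\phi$ is a negligible perturbation of the flat phase; and (iii) the many auxiliary parameters must be chosen so that no single step costs more than $\la^\e$.
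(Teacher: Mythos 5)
Your overall architecture (broad--narrow reduction, iterated polynomial partitioning with a cell/transverse/tangent trichotomy, and a final pass extracting extra geometry from the tangential terms) matches the paper's, but the proposal has a genuine wrong turn at the very first step. You reduce Theorem \ref{B-R-theorem-sharp} to an $L^\infty\to L^p$ bound for a general H\"ormander-type oscillatory integral satisfying the standard nondegeneracy and positive-definite curvature hypotheses, ``as in \cite{Lee} and \cite{G-H-I}.'' That reduction cannot reach $3.25$: for that class of operators in $\ZR^3$ the exponent $10/3$ is \emph{sharp} (this is exactly the content of the counterexamples in \cite{G-H-I}, building on Bourgain's), which is why Lee's argument stops at $10/3$. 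To go below $10/3$ one must \emph{not} pass to the variable-coefficient class. The paper instead keeps the translation-invariant multiplier structure throughout: after the dyadic decomposition it works with the convolution operator $Sf$ of \eqref{multiplier-definition}, whose multiplier lives in an $R^{-1}$-neighborhood of a fixed graph, so the wave packets are honest straight tubes. It is precisely this translation invariance that makes the two tools of Section 7 available --- the Nikodym maximal function (used with no loss even below the critical index $q=3$) and the Littlewood--Paley theorem for translated cubes --- which are what convert the $L^2$ information back into $L^p$. Relatedly, your target estimate carries $\|g\|_{L^\infty}$ on the right, a restriction-type formulation; for Bochner--Riesz one must carry $\|f\|_p$ through the whole argument, and the paper flags ``passing back to $L^p$'' as a principal difficulty. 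Your sketch supplies no mechanism for it.

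The description of the backward algorithm is also not the paper's mechanism and is too vague to close the argument. You describe starting at the deepest leaves and telescoping a gain upward that cancels each local loss, with the gain coming from Wang's brooms; the paper explicitly states it is \emph{unable} to use Wang's broom/two-ends relation here. What actually happens is: after the forward iteration halts at a tangential step, the small-scale tangent tubes are related back to scale-$R$ wave packets of $f$ through a chain of dyadic pigeonholings (the multiplicities $v_u$), producing simultaneously (i) the $L^2$ inequality \eqref{backward-l2-final} bounding $\sum_j\|g_{j,tang}\|_2^2$ by a favorable factor times $\sum_{T\in\ZT_{B_R,tang}}\|f_T\|_2^2$, and (ii) the measure bound \eqref{backward-l1-final} for the intersection of $\cup T$ with every horizontal plane $L_a$, proved by a hairbrush/incidence argument on the tangent tubes (Lemma \ref{incidence-lemma}) propagated up through the scales. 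Estimate (ii) is then fed into the Nikodym maximal function and Theorem \ref{L-P-cubes} to dominate $\sum_T\|f_T\|_2^2$ by $\|f\|_p^2$ times $\big(R^3\min\{1,r^{-1/2}|\cf|v^{-1}\}\big)^{1/q}$, and the optimization in $v$ is what closes the induction exactly at $p=3.25$. Without this concrete incidence-plus-maximal-function step the proposal has no quantitative content at the crux.
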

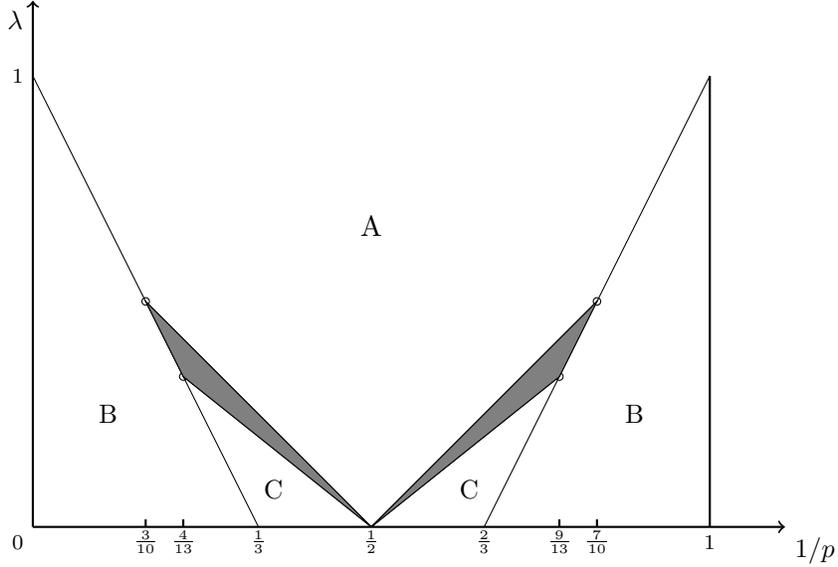
\begin{figure}
\begin{tikzpicture}
\draw[->,thick] (0,0) -- (0,7) node [anchor=north east] {$\la$};
\draw[->,thick] (0,0) -- (10,0) node [anchor=north west] {$1/p$};
\draw[thick] (9,0) -- (9,6) node [anchor=north east] {};

\node at (4.5,-0.2) {\tiny $\frac{1}{2}$};

\node at (3,-0.2) {\tiny $\frac{1}{3}$};

\node at (6,-0.2) {\tiny $\frac{2}{3}$};

\node at (-0.2,6) {\footnotesize $1$};
\node at (9,-0.2) {\footnotesize $1$};
\node at (-0.2,-0.2) {\footnotesize 0} ;

\node at (2,-0.2) {\tiny $\frac{4}{13}$};
\draw[thick] (2,0) -- (2,0.1);
\node at (1.5,-0.2) {\tiny $\frac{3}{10}$};
\draw[thick] (1.5,0) -- (1.5,0.1);
\node at (7,-0.2) {\tiny $\frac{9}{13}$};
\draw[thick] (7,0) -- (7,0.1);
\node at (7.5,-0.2) {\tiny $\frac{7}{10}$};
\draw[thick] (7.5,0) -- (7.5,0.1);
\draw (0,6) -- (3,0);
\draw (9,6) -- (6,0);
\draw (2,2) circle (.5mm);
\draw (1.5,3) circle (.5mm);
\draw (7,2) circle (.5mm);
\draw (7.5,3) circle (.5mm);
\draw (2,2) -- (4.5,0);
\draw (1.5,3) -- (4.5,0);
\draw (7,2) -- (4.5,0);
\draw (7.5,3) -- (4.5,0);
\draw [fill=gray] (4.5,0) to (2,2) to (1.5,3) to  (4.5,0) ;
\draw [fill=gray] (4.5,0) to (7,2) to (7.5,3) to  (4.5,0) ;
\node at (8,1.5) {B};
\node at (1,1.5) {B};
\node at (4.5,4) {\large A};
\node at (3.2,0.5) {C};
\node at (5.8,0.5) {C};

\end{tikzpicture}

\caption{$L^p$ behavior for the Bochner-Riesz operator \eqref{B-R-operator} in $\ZR^3$.    } 
\label{figure-graph}

\end{figure}

We let $K^\la$ be the kernel of the Bochner-Riesz operator, namely, $T^\la f=K^\la\ast f$. Recall that $K^\la(x)$ has an asymptotic expansion
\begin{equation}
    K^\lambda(x)\approx|x|^{-(n+1)/2-\lambda}\left(e^{i|x|}\sum_{j=0}^{\infty}a_j|x|^{-j}+e^{-i|x|}\sum_{j=0}^\infty b_j|x|^{-j}\right)
\end{equation}
for some constants $a_j,b_j$ as $x\to\infty$. This shows that the estimates \eqref{B-R-conj} fail when $\la\leq\la_{n,p}$, by taking $f$ to be a smooth test function whose Fourier transform equals to 1 in the unit ball. In particular, when $n=3$, the estimates \eqref{B-R-conj} fail in the region $B$ in Figure \ref{figure-graph}. The region $A$ was obtained by Lee in \cite{Lee}, while the region $C$ remains open. The shaded area shows the improvement we will make in this paper.

\vspace{3mm}

The main method we use to prove Theorem \ref{B-R-theorem-sharp} is polynomial partitioning, which was introduced by Guth and Katz \cite{Guth-Katz}. In \cite{Guth-restriction-R3}, Guth applied this idea to restriction estimates in Fourier analysis. A nice overview of polynomial partitioning can be found in \cite{Guth-restriction-R3}. Broadly speaking, given a finite measure $\mu$ in $\ZR^n$, one can use the Borsuk-Ulam theorem to find a polynomial of degree $d$, such that the zero set of this polynomial partitions $\ZR^n$ into $\sim d^n$ many components, each of which has the same $\mu$-measure. These components are often called ``cells". It should be emphasized that the lower bound of the number of cells is crucial in the polynomial partitioning method. That is, we need the number of cells to be greater than $c d^n$, for a small absolute constant $c$.

The polynomial partitioning method also uses some basic theorems for varieties. For example: The Fundamental Theorem of Algebra, the B\'ezout's theorem. We will slightly modify Guth's original argument in \cite{Guth-restriction-R3} for the polynomial partitioning method. See Section 4 for details.

In the landmark paper \cite{Guth-restriction-R3}, Guth used polynomial partitioning to improve the restriction conjecture in $\ZR^3$ to $p>3.25$. This result was later strengthened to $p>13/3$ by Wang \cite{Wang-restriction-R3}. Specifically, Wang observed that one would have some extra geometric structures among wave packets by using polynomial partitioning iteratively. Thus, utilizing the idea of Wolff's two-ends argument in \cite{Wolff-bilinear}, Wang introduced a relation between wave packets and large balls, similar to the one in \cite{Tao-bilinear}, to obtain the best result for restriction conjecture in $\ZR^3$ so far. Howerer, we are unable to use Wang's idea in this paper. Instead, motivated by Wang's work, we seek for connections between wave packets at different scales. We also use the dyadic pigeonholing trick to realize our geometric observations. Details are presented in Section 7.

One difficulty for attacking the Bochner-Riesz conjecture is that, there is not an efficient way to pass $L^2$ space back to $L^p$ space. In this paper, we will use a combination of square function, Littlwood-Paley theorem for translated cubes, and the Nikodym maximal function to help us move back to $L^p$ space. The idea of using the Nikodym maximal function for studying the Bochner-Riesz conjecture dates back to \cite{Cordoba-B-R}. See also \cite{Bourgain-Besicovitch}. We point out that we have no loss when estimating the Nikodym maximal function in this paper, even below the critical index $q=3$ for the Nikodym maximal conjecture.

It is known that (See for instance \cite{Carbery}, \cite{Tao-B-R-restriction}) the restriction conjecture and the Bochner-Riesz conjecture are closely related. As a result, one would expect that progress in one problem would impact the other. Recent breakthroughs in the restriction conjecture encourage us to work on the Bochner-Riesz conjecture.

\vspace{3mm}

For convenience, we will prove the following version of Theorem \ref{B-R-theorem-sharp}. The case $p<2$ can be obtained by duality.
\begin{theorem}
\label{B-R-theorem-sharp-2}
Let $T^\la$ be the Bochner-Riesz operator defined in \eqref{B-R-operator}. Then when $n=3$, for any Schwartz function $f$, any $p\geq3.25$,  $\e<100$, we have
\begin{equation}
\label{B-R-estimate2}
    \|T^{\la_{3,p}+3\e} f\|_p\leq C_{p,\e}\|f\|_p,
\end{equation}
where $\la_{3,p}$ was defined in \eqref{lambda-p}.
\end{theorem}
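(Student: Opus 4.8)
The plan is to reduce Theorem \ref{B-R-theorem-sharp-2} to a single scale-uniform estimate for an annular multiplier, and then to attack that estimate with the Bourgain--Guth broad--narrow decomposition followed by iterated polynomial partitioning. First I would apply a Littlewood--Paley decomposition in the frequency variable near the sphere $|\xi|=1$: writing $T^\la=\suml_{k\geq 0}T^\la_k$, where $T^\la_k$ has multiplier comparable to $2^{-k\la}\psi\big(2^k(1-|\xi|)\big)$, supported in the annulus $\big\{\,\big|1-|\xi|\big|\sim 2^{-k}\,\big\}$. The asymptotic expansion of $K^\la$ recalled above is exactly what forces the constraint $\la>\la_{3,p}$, and conversely it shows that \eqref{B-R-estimate2} follows from a $\de$-uniform bound $\|S_\de f\|_p\lesssim_\e \de^{-\la_{3,p}-\e}\|f\|_p$ for the normalized annular operator $S_\de$ (multiplier a smooth bump on $\{1-\de\leq|\xi|\leq 1\}$), summed as a geometric series in $k=\log_2(1/\de)$ --- the extra factor $3\e$ leaving room for rescaling Jacobians and the summation. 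Cutting the annulus into $\sim\de^{-1}$ caps of dimensions $\de^{1/2}\times\de^{1/2}\times\de$ and parabolically rescaling each cap turns this into a local $L^p(B_R)$ bound, $R=\de^{-1}$, with admissible loss $R^{\e}$, for a Carleson--Sj\"olin (positively curved H\"ormander-type) operator; since Lee's exponent $10/3$ is precisely the general H\"ormander-type threshold in $\ZR^3$, it is the positivity of the curvature of the sphere that must be exploited to do better.

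On the ball $B_R$ I would decompose the input into wave packets adapted to $R^{-1/2}$-caps --- tubes of dimensions $R^{1/2}\times R^{1/2}\times R$ --- and run the broad--narrow dichotomy of \cite{Bourgain-Guth-Oscillatory}. At each point, either the contribution concentrates on wave packets from a single cap of some intermediate size (the \emph{narrow} case), which after parabolic rescaling is an instance of the same estimate at a smaller scale and is absorbed by the induction on $R$, the gain coming from the reduced number of caps; or several pairwise transverse caps contribute comparably (the \emph{broad} case), where one has the multilinear restriction estimate of Bennett--Carbery--Tao. Essentially the whole argument lies in the broad case.

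For the broad term I would run the polynomial partitioning of Guth \cite{Guth-restriction-R3}, in the slightly modified form of Section 4 that secures the crucial lower bound $\gtrsim d^3$ on the number of cells: a degree-$d$ polynomial whose zero set $Z$ splits $B_R$ into $\sim d^3$ cells of roughly equal mass. In the \emph{cellular} case the mass spreads over the cells and one inducts cell by cell (each smaller, each wave packet meeting few of them). In the \emph{algebraic} case the mass concentrates near $Z$, and there the wave packets split into those transverse to $Z$ (bounded overlap, handled directly) and those tangent to $Z$. The tangential packets --- tangent to a $2$-dimensional variety in $\ZR^3$ --- are the crux; following \cite{Wang-restriction-R3} one iterates the polynomial partitioning inside the wall, producing a nested family of scales and of varieties. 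Rather than relating wave packets to large balls as in \cite{Wang-restriction-R3}, I would look for relations between the wave packets living at the different scales of this nested family: after the forward iteration terminates, run it in reverse --- this is the backward algorithm --- using dyadic pigeonholing at each step to make the geometric observation on the tangential contributions precise.

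Finally, since polynomial partitioning naturally outputs $L^2$-based and essentially local information, to recombine the pieces in $L^p$ I would use, as announced in the introduction, a square function estimate, the Littlewood--Paley theorem for translated cubes, and sharp bounds for the Nikodym maximal function --- with no loss, even below the critical index $q=3$ of the Nikodym maximal conjecture in $\ZR^3$. Summing the narrow and broad contributions and closing the induction on the radius then yields $\|S_\de f\|_p\lesssim_\e\de^{-\la_{3,p}-\e}\|f\|_p$, hence \eqref{B-R-estimate2}. I expect the main obstacle to be exactly the tangential algebraic case: one must extract enough from the geometry of wave packets tangent to $Z$, via the backward algorithm together with the Nikodym input, that the bookkeeping of all the gains --- narrow recursion, cellular recursion, transversality, tangency, and the cost of passing from $L^2$ back to $L^p$ --- balances precisely at the exponent $p=3.25$.
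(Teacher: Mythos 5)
Your proposal follows essentially the same route as the paper: dyadic decomposition of the multiplier near the sphere reducing to a scale-$R=\de^{-1}$ annular/graph operator, localization to $B_R$, the Bourgain--Guth broad--narrow dichotomy with parabolic rescaling and induction on scales for the narrow part, and, for the broad part, iterated polynomial partitioning with a backward algorithm, dyadic pigeonholing, Nikodym maximal bounds, and the Littlewood--Paley theorem for translated cubes to return to $L^p$. The only stray remark is the appeal to Bennett--Carbery--Tao in the broad case, which the paper does not use (the broad term is handled entirely by the partitioning and the bilinear tangential estimate), but since you immediately replace it with Guth's partitioning this does not change the argument.
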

The assumption $\e<100$ is just a minor technical issue. Our method is robust for $\e\geq1/100$. From now on, we fix $\e<1/100$, and let $\be=\e^{1000}$, $d=\e^6$, $\de=\e^2$ for convenience. We also let $E_\e=\e^{-\e^{-10000}}$ and assume $3<p<10/3$.

\vspace{3mm}

This paper is organized as follows: In Section 2, we will include some basic techniques in harmonic analysis, and a standard decomposition of the Bochner-Riesz operator. Section 3 is devoted to a revision of wave packet decomposition. In Section 4, we will modify Guth's original polynomial partitioning, and use the modification repeatedly in Section 5 to build up our iterated polynomial partitioning algorithm. After that, we will focus on the tangential contribution in Section 6. Finally, we will create a backward algorithm to pile things up in Section 7, and conclude the proof of our main result.

\vspace{3mm}

{\bf Notations: }
Throughout the paper, we will use the following notations:
\begin{enumerate}
    \item[$\bullet$] We let $a\sim b$ mean that $ca\leq b\leq Ca$ for some unimportant constants $c$ and $C$. We also use $a\lesssim b$ to represent $a\leq Cb$ for an unimportant constant $C$, and use $a\lesssim_\e b$ to reprensent $a\leq C_\e b$ for a big constant $C_\e$ depends only on $\e$. We remark that these constants may change from line to line.
    \item[$\bullet$]We use $B^n(x,r)$ to represent the open ball centered at $x$, of radius $r$, in $\ZR^n$, and use $B^n_r$ to represent the ball $B^n(0,r)$. In particular, we will use $B_r$ to denote the ball $B^3_r$. For any point $x=(x_1,\ldots,x_n)\in\ZR^n$, we let $\bar{x}=(x_1,\ldots,x_{n-1})$ be the first $n-1$ coordinates of $x$.
    \item[$\bullet$] Assuming $T$ is a finite rectangular tube in $\ZR^n$, we let $c(T)\in\ZR^n$ be the center of $T$, and let $\{e_1,\ldots,e_n\}$ be an orthonormal frame associated to $T$. That is, $e_j$ is a vector parallels to the $j$-th side of $T$, $1\leq j\leq n$. For any point $x\in\ZR^n$, we define a dilation of $x$ with respect to $T$ by $x/T:=(x_1/l_1,\ldots,x_n/l_n)$, where $l_1,\ldots,l_n$ are the side lengths of $T$ and $(x_1,\ldots,x_n)$ is the coordinate of $x$ under the frame $\{e_1,\ldots,e_n\}$.
    \item[$\bullet$] For any finite rectangular tube $T\subset\ZR^n$, we define a weight function $w_{N,T}(x)$ associated to it by  $w_{N,T}(x)=(1+|(x-c(T))/T|)^{-N}$.
    \item[$\bullet$] For any function $f$ defined in $\ZR^n$, we use $Z(f)$ to denote the zero sets of $f$, $Z(f)=\{x\in\ZR^n:f(x)=0\}$, and use $Z(f_1,\ldots,f_k)$ to denote the set $Z(f_1)\cap\cdots\cap Z(f_k)$. We also use both $N_rX$ and $N_r(X)$ to denote the $r$-neighborhood of the set $X$, for any $X\subset\ZR^n$ and a positive number $r$.
\end{enumerate}

\vspace{3mm}

\noindent
{\bf Acknowlegement}. I would like to thank my advisor Xiaochun Li for his encouragements throughout the project. I would also like to thank Mengzhudong Feng and Jiahao Hu for helpful discussions related to algebraic geometry.

\section{Preliminaries and basic setups}
In this section, we will first review some basic techniques in harmonic analysis. Then, we will decompose the Bochner-Riesz operator \eqref{B-R-operator} and work on a specific model operator, which we will carefully study in Section 5. Since such decomposition works for all dimensions, we prefer not to restrict ourselves in $\ZR^3$ here.

\vspace{3mm}
\subsection{Smooth mollifier} \hfill

We construct a smooth mollifier that will help us built up smooth partitions of unity and smooth cutoff functions throughout the paper. Of course there are various kinds of mollifiers, while here we just give one typical example.

We first build up a smooth mollifier in $\ZR$. Let $a(x)$ be the function
\begin{equation}
    a(x)=\Bigg(\prod_{k=1}^{E_\e}\frac{\sin (2^{-k}x)}{2^{-k}x}\Bigg)^2.
\end{equation}
Since
\begin{equation}
    \int_{\ZR}2^{k-1}\chi_{[-2^{-k},2^{-k}]}(\xi)e^{ix\cdot\xi}d\xi=\frac{\sin 2^{-k}x}{2^{-k}x},
\end{equation}
we can check directly that
\begin{enumerate}
    \item $1\geq a(x)\geq c$ when $x\in[-1,1]$;
    \item $0\leq \wh{a}(\xi)\leq1$, $\wh{a}(\xi)$ is supported in $[-2,2]$ and $\wh{a}(\xi)\geq c$ on $[-1,1]$;
    \item $a(x)\leq 2^{N^N}|x|^{-N}$ for any $|x|\geq 1$, where $N=E_\e$.
    \item As a consequence of $(1)$ and $(3)$, $a(x)\leq N^{N^N}(1+|x|)^{-N}$.
    \item We have $\int a(x)dx, \int \wh{a}(\xi)d\xi<\infty$.
\end{enumerate}

\vspace{3mm}

Based on the function $a(x)$ defined in $\ZR$, we can construct a smooth mollifier in $\ZR^n$. It is standard to use the smooth mollifiers to construct different kinds of smooth cutoff functions with respect to any rectangular parallelepiped in $\ZR^n$, such that these cutoff functions have good support conditions, regularity conditions, and decay conditions. We can also use the smooth mollifier to construct different smooth partitions of unity with respect to any cover of $\ZR^n$ using congruent rectangular parallelepipeds.

We will use these constructions for free throughout the paper. Since all the constructions stated here are quite standard, we omit the details. 

\subsection{A local $L^2$ estimate}\hfill

Let $\Ga$, $\Ga=\{(\bar\xi,\Phi(\bar\xi)):\bar\xi\in\overline{B^{n-1}(0,1)}\}$, be the truncated graph of a function $\Phi:\ZR^{n-1}\to\ZR$ with bounded Hessian. Next, we state a local $L^2$ estimate for functions whose Fourier transforms are supported in a small neighborhood of $\Ga$. A similar result for $\Ga$ being the truncated cone was stated in \cite{Wolff-bilinear}.
\begin{lemma}
Let $f$ be an $L^2$ function such that $\wh{f}$ is supported in $N_{\rho}(\Ga)$,  where $\rho$ is a positive number much smaller than 1. Let $1\geq\si\geq\rho$ and let $B\subset\ZR^n$ be a ball of radius $\si^{-1}$ in the time space. We assume that $\vp_B$ is a smooth cutoff function with respect to $B$ that $|\vp_B(x)|\geq c$ on $B$, $\wh\vp_B$ is supported in $B(0,3\si)$ and $|\wh\vp_B(\xi)|\geq c$ on $B(0,\si)$. Then
\begin{equation}
\label{local-l2}
    \|f\vp_B\|_2\lesssim\rho^{1/2}\si^{-1/2}\|f\|_2.
\end{equation}
\end{lemma}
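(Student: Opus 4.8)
The plan is to exploit a standard orthogonality-in-frequency argument after localizing in physical space. First I would note that the function $g := f\vp_B$ has Fourier transform $\wh{g} = \wh{f} * \wh{\vp_B}$, which is supported in the set $N_{\rho}(\Ga) + B(0,3\si)$. Since $\rho \le \si \le 1$ and $\Ga$ is the graph of a function $\Phi$ with bounded Hessian over $\overline{B^{n-1}(0,1)}$, this enlarged support is contained in a $\sim\si$-neighborhood of the graph, a slab-like region of vertical thickness $O(\si)$ over the unit ball in the $\bar\xi$-variables. The key point is that when we intersect this region with a vertical strip $\{|\bar\xi - \bar\eta| \le \si\}$ of horizontal width $\si$, the bounded Hessian of $\Phi$ forces the graph to stay within a vertical window of size $O(\si^2) + O(\si) = O(\si)$; but more usefully, we can cover $N_\rho(\Ga)$ efficiently and then dominate by the coarser $\si$-slab structure.

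The heart of the estimate is a covering and almost-orthogonality step. I would tile $\overline{B^{n-1}(0,1)}$ by a finitely-overlapping collection of $\si$-cubes $\{\tau\}$ in the $\bar\xi$-variable, of which there are $\sim \si^{-(n-1)}$; above each $\tau$, the portion of $N_\rho(\Ga)+B(0,3\si)$ lying over $\tau$ is contained in a rectangular box $R_\tau$ of dimensions $\sim \si \times \cdots \times \si$ (i.e. a cube of side $\sim\si$), using the bounded-Hessian hypothesis to control the vertical oscillation of $\Phi$ over a $\si$-cube by $O(\si^2) \lesssim \si$. Writing $g = \sum_\tau g_\tau$ with $\wh{g_\tau}$ supported in $R_\tau$, the boxes $R_\tau$ have bounded overlap, so by Plancherel $\|g\|_2^2 \sim \sum_\tau \|g_\tau\|_2^2$. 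On the other hand, for each fixed $\tau$, the portion of $\wh{g}$ over $\tau$ actually lives in a much thinner set: $\wh{f}$ over $\tau$ occupies only a $\rho$-neighborhood of a graph piece, and after convolving with $\wh{\vp_B}$ (supported in $B(0,3\si)$) it occupies a set of vertical thickness $\sim\si$ but — crucially — we will instead decompose $f = \sum_\tau f_\tau$ first and use that $\wh{f_\tau}$ sits in a $\rho\times\si\times\cdots$ plank.

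A cleaner route, which I expect to be the actual argument: decompose $f = \sum_{\tau} f_\tau$ where $\tau$ ranges over $\rho$-cubes covering $B^{n-1}(0,1)$ (so $\wh{f_\tau}$ is supported in a box $\theta_\tau$ of dimensions $\rho\times\cdots\times\rho$, there being $\sim\rho^{-(n-1)}$ of them and they are finitely overlapping, hence $\|f\|_2^2 \sim \sum_\tau \|f_\tau\|_2^2$). Then $f\vp_B = \sum_\tau f_\tau \vp_B$, and $\wh{f_\tau \vp_B}$ is supported in $\theta_\tau + B(0,3\si)$, a box of dimensions $\sim\si\times\cdots\times\si$. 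Group the $\tau$'s into clusters indexed by $\si$-cubes $Q$; for each $Q$ the $\theta_\tau + B(0,3\si)$ with $\tau\subset Q$ all lie in a common $O(\si)$-cube, and distinct $Q$'s give boundedly overlapping such cubes. Thus $\|f\vp_B\|_2^2 \lesssim \sum_Q \|\sum_{\tau\subset Q} f_\tau\vp_B\|_2^2$. For a single $Q$, the function $\sum_{\tau\subset Q} f_\tau\vp_B = (\sum_{\tau\subset Q} f_\tau)\vp_B =: h_Q \vp_B$ has $\wh{h_Q}$ supported in a $\rho\times\si\times\cdots\times\si$ slab (thickness $\rho$ in the normal direction, by bounded Hessian over the $\si$-cube $Q$). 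Here I would use the hypothesis on $\vp_B$ together with an uncertainty-principle / local-constancy argument: since $\vp_B$ is an $L^1$-normalized bump adapted to a ball of radius $\si^{-1}$, and $h_Q$ has frequency support in a plank of thickness $\rho$ in one direction, $\|h_Q\vp_B\|_2^2 \lesssim \rho\si^{-1}\,\si^{n-1}\cdot(\text{average of }|h_Q|^2\text{ over scale }\si^{-1})$ — more precisely one proves $\|h_Q \vp_B\|_2^2 \lesssim \rho\,\si^{-1} \|h_Q\|_{L^2(w_B)}^2$ for a suitable weight $w_B$, by writing $h_Q$ as a superposition over its $\rho^{-1}\times\si^{-1}\times\cdots$ dual tiles and noting $\vp_B$ is essentially constant on the $\si^{-1}$-scale. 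Summing over $Q$ and undoing the orthogonality, $\|f\vp_B\|_2^2 \lesssim \rho\si^{-1}\sum_Q \|h_Q\|_2^2 \sim \rho\si^{-1}\|f\|_2^2$, which is \eqref{local-l2}.

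The main obstacle is making the single-slab estimate $\|h_Q\vp_B\|_2 \lesssim \rho^{1/2}\si^{-1/2}\|h_Q\|_2$ rigorous: one must quantify that a function with frequency support in a slab of normal-thickness $\rho$, after multiplication by a bump localized to the dual scale $\si^{-1}$ in the normal direction, gains exactly the factor $(\rho/\si)^{1/2}$. This is where the precise support and lower-bound conditions on $\wh{\vp_B}$ (supported in $B(0,3\si)$, $\gtrsim c$ on $B(0,\si)$) and $\vp_B$ ($\gtrsim c$ on $B$) are used, via a $TT^*$ or direct Plancherel computation comparing $\|h_Q\vp_B\|_2$ to $\|h_Q\|_2$ through the overlap of translates of the slab by the support of $\wh\vp_B$. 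Everything else — the two layers of almost-orthogonality and the geometry of $N_\rho(\Ga)$ under the bounded-Hessian assumption — is routine once one fixes the finitely-overlapping covers at scales $\rho$ and $\si$; the bounded Hessian is exactly what guarantees that the graph over a $\si$-cube deviates from its tangent plane by $O(\si^2)\le O(\rho)\cdot\text{(nothing)}$... and here I would be careful: over a $\si$-cube the deviation is $O(\si^2)$, which need not be $\le\rho$, so the slab thickness in the normal direction is really $\max(\rho,\si^2)$; since $\si\le1$ we have $\si^2\le\si$, and if $\si^2 > \rho$ one simply redoes the decomposition at an intermediate scale or absorbs the discrepancy into the weight — I would check that the final bound $\rho^{1/2}\si^{-1/2}$ still holds, which it does because the relevant gain is governed by the $\rho$-neighborhood of $\Ga$ itself and not by the curvature-induced thickening once $\vp_B$ is localized at scale $\si^{-1}$. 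I would present the argument in the clean form above and remark that the curvature only enters to ensure the clusters $Q$ have bounded overlap.
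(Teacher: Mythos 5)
There is a genuine gap at the central step of your argument. Everything rests on the single-cluster estimate $\|h_Q\vp_B\|_2\lesssim\rho^{1/2}\si^{-1/2}\|h_Q\|_2$, which you justify by asserting that $\wh{h_Q}$ is supported in a slab of normal thickness $\rho$ over the $\si$-cube $Q$. That support claim is false in general: over a $\si$-cube the graph of $\Phi$ deviates from its tangent plane by $O(\si^2)$, so the slab thickness is $\max(\rho,\si^2)$, and the hypotheses only give $\rho\le\si\le1$, so the regime $\si^2>\rho$ (e.g.\ $\rho=R^{-1}$, $\si=R^{-1/4}$) is permitted. In that regime your flat-plank argument yields only the factor $(\si^2/\si)^{1/2}=\si^{1/2}$ rather than $(\rho/\si)^{1/2}$. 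You flag this yourself at the end, but ``redo the decomposition at an intermediate scale or absorb the discrepancy into the weight'' is precisely the missing step, and the sentence ``the relevant gain is governed by the $\rho$-neighborhood of $\Ga$ itself'' is the assertion to be proved, not a justification: the cluster estimate is essentially the original lemma restricted to a piece of the surface, so nothing has been reduced. The correct intermediate scale is $\rho^{1/2}$, the unique scale at which the curvature-induced thickening $O((\rho^{1/2})^2)$ matches $\rho$; covering $\Ga$ by $\rho^{1/2}$-caps, each cap's $\rho$-neighborhood sits in a finitely overlapping $\rho^{1/2}\times\cdots\times\rho^{1/2}\times\rho$ box, and one deduces the measure bound $|q\cap N_\rho(\Ga)|\lesssim\rho\si^{-1}|q|$ for every $\si$-ball $q$ and every $\si\in[\rho,1]$.

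Once that measure bound is in hand, the paper dispenses with both layers of your decomposition: by Plancherel $\|f\vp_B\|_2=\|\wh f\ast\wh\vp_B\|_2$, and expanding the square gives a bilinear form with kernel $K(\xi,\xi')=\Id_{N_\rho(\Ga)}(\xi)\Id_{N_\rho(\Ga)}(\xi')\int\wh\vp_B(\eta-\xi)\overline{\wh\vp_B(\eta-\xi')}\,d\eta$, which is supported on $|\xi-\xi'|\le6\si$ and satisfies $\int|K(\xi,\xi')|\,d\xi'\lesssim\rho\si^{-1}$ by the measure bound; Schur's test then finishes the proof in two lines. I would recommend replacing the clustering, the weighted local-constancy step, and the ``undoing of orthogonality'' by this direct Schur-test computation, keeping from your write-up only the $\rho^{1/2}$-cap covering (not the $\rho$- and $\si$-cap coverings) as the geometric input. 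A minor further point: the lemma as stated gives only lower bounds and support conditions on $\vp_B$ and $\wh\vp_B$; both your argument and the Schur test also need the matching upper bounds $\|\vp_B\|_\infty\lesssim1$ (equivalently $\|\wh\vp_B\|_1\lesssim1$, $\|\wh\vp_B\|_\infty\lesssim\si^{-n}$), which are implicit in the phrase ``smooth cutoff function with respect to $B$'' and should be stated if you use them quantitatively.
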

\begin{proof}

We partition $\overline{B^{n-1}(0,1)}$ into finitely overlapping balls $\{Q\}$ of radius $\rho^{1/2}$. Since $|\nabla^2\Phi|$ is bounded, the set $\Ga_Q=\{(\bar\xi,\Phi(\bar\xi)):\bar\xi\in Q\}$ is contained in a rectangular tube $\om_Q$ of dimensions $\sim\rho^{1/2}\times\cdots\times\rho^{1/2}\times\rho$, and hence $N_{\rho}(\Ga_Q)\subset 2\om_Q$. Notice that the sets $\{\om_Q\}$ are finitely overlapped. Thus, for any $1\geq\si\geq\rho$, any $\si$ ball $q\subset\ZR^n$, we have
\begin{equation}
    |q\cap N_{\rho}(\Ga)|\lesssim \rho\si^{-1}|q|.
\end{equation}

By Plancherel, $\|f\vp_B\|_2=\|\wh{f}\ast\wh\vp_B\|_2$. We square $\|\wh{f}\ast\wh\vp_B\|_2$ and recall that $\wh{f}$ is supported in $N_\rho(\Ga)$, so
\begin{equation}
    \|\wh{f}\ast\wh\vp_B\|_2^2=\int \Id_{N_\rho(\Ga)}(\xi)\Id_{N_\rho(\Ga)}(\xi')\wh{f}(\xi)\wh\vp_B(\eta-\xi)\overline{\wh{f}(\xi')\wh\vp_B(\eta-\xi')}d\eta d\xi d\xi'.
\end{equation}
Let $K(\xi,\xi')$ be the kernel
\begin{equation}
    K(\xi,\xi')=\int\wh\vp_B(\eta-\xi)\overline{\wh\vp_B(\eta-\xi')}\Id_{N_\rho(\Ga)}(\xi)\Id_{N_\rho(\Ga)}(\xi')d\eta.
\end{equation}
It is straightforward to check that
\begin{equation}
    \int |K(\xi,\xi')|d\xi,~~\int |K(\xi,\xi')|d\xi'\lesssim \rho\si^{-1}.
\end{equation}
Therefore, by Cauchy-Schwarz inequality and Schur's test, $\|\wh{f}\ast\wh\vp_B\|_2^2\lesssim \rho\si^{-1}\|\wh{f}\|_2^2$. Take square root for both sides to conclude our lemma.
\end{proof}
We will not use this lemma until Section 5. While we choose to state it here because the lemma is elementary.

\subsection{Littwood-Paley theorem for translated cubes}\hfill

There is a lot of literature including the study of Littwood-Paley theorem for translated cubes or rectangles. For example, \cite{Cordoba-B-R}, \cite{Cordoba-L-P}, and a generalization in \cite{Rubio-de-Francia}. We choose to state the one in \cite{Cordoba-L-P} here, as it is what we need later in Section 7.
\begin{theorem}
\label{L-P-cubes}
Assume that $\{\om\}$ is a collection of congruent rectangular parallelepipeds in $\ZR^n$. Let $\{\wh\psi_\om\}$ be a collection of smooth cutoff functions associated to $\{\om\}$ such that $\wh\psi_\om$ is supported in $2\om$. Then for any Schwartz function $f$ and for any $2\leq p<\infty$, 
\begin{equation}
    \Big\|\Big(\sum_\om|\psi_\om\ast f|^2\Big)^{\frac{1}{2}}\Big\|_p\leq C_p\|f\|_p.
\end{equation}
\end{theorem}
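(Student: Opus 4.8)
The plan is to reduce the square-function estimate in Theorem \ref{L-P-cubes} to the known Littlewood--Paley inequality for a genuine dyadic or lacunary decomposition, exploiting the fact that the parallelepipeds $\om$ are all \emph{congruent} (translates of a fixed box $\om_0$). First I would normalize: after an affine change of variables we may assume the common box is the unit cube $Q_0=[-1/2,1/2]^n$, so that $\{\om\}$ is a collection of unit cubes, and $\wh\psi_\om$ is supported in $2\om$, i.e.\ in a cube of sidelength $2$ concentric with $\om$. The key structural point is then that the frequency supports $2\om$ have bounded overlap \emph{in each fixed unit-scale region}: any ball of radius $O(1)$ meets only $O(1)$ of the $2\om$'s that are essentially distinct, but of course infinitely many $\om$ may pile up. So the inequality cannot follow from mere orthogonality; one must use the $L^p$ structure.

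The approach I would take is the standard one via Rubio de Francia / C\'ordoba: decompose $\ZR^n$ into its lattice of unit cubes $\{R\}$ (a tiling by translates of $Q_0$), and for each $R$ let $\wh\phi_R$ be a smooth bump adapted to $2R$ forming a partition of unity, $\sum_R\wh\phi_R\equiv1$. For each $\om$ there are only $O(1)$ lattice cubes $R$ with $2R\cap 2\om\neq\emptyset$, so $\psi_\om\ast f=\sum_{R:\,2R\cap2\om\neq\emptyset}\psi_\om\ast(\phi_R\ast f)$, and by Minkowski it suffices to control $\big\|(\sum_\om|\psi_\om\ast(\phi_R\ast f)|^2)^{1/2}\big\|_p$ for each fixed residue class of $R$. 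Now within a fixed such class the cubes $2\om$ meeting a fixed $2R$ are genuinely finitely overlapping, so for the piece $f_R=\phi_R\ast f$ one has the easy $L^2$ bound $\sum_\om\|\psi_\om\ast f_R\|_2^2\lesssim\|f_R\|_2^2$ by Plancherel; combined with the trivial pointwise bound $\sup_\om|\psi_\om\ast f_R|\lesssim Mf_R$ (maximal function, since $\psi_\om$ is an $L^1$-normalized bump with rapidly decaying tails, using property (4) of the mollifier $a$) and interpolation/vector-valued Calder\'on--Zygmund, one gets the square-function bound in $L^p$, $2\le p<\infty$. Summing the $O(1)$ residue classes and undoing the normalization finishes the proof.

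Alternatively, and perhaps more cleanly, I would invoke directly the vector-valued Calder\'on--Zygmund machinery: the operator $f\mapsto\{\psi_\om\ast f\}_\om$ is bounded from $L^2$ to $L^2(\ell^2)$ by the bounded-overlap Plancherel argument above (this is where congruence is essential --- for a fixed scale the multipliers $\{\wh\psi_\om\}$ have uniformly bounded overlap after the lattice localization), and its kernel $\{\psi_\om(x-y)\}_\om$, viewed as an $\ell^2$-valued kernel, satisfies the H\"ormander condition because each $\psi_\om$ is a translate--modulate of a single fixed rapidly-decreasing Schwartz function, so $\|\nabla_y\{\psi_\om(x-y)\}\|_{\ell^2}\lesssim|x-y|^{-n-1}$ uniformly. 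Then vector-valued Calder\'on--Zygmund gives weak-$(1,1)$ and hence $L^p$ boundedness for $1<p<\infty$, and in particular for $2\le p<\infty$. I would only need the range $p\ge2$ as stated.

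The main obstacle, and the only place real care is needed, is the bounded-overlap claim: one must be sure that after localizing $f$ to a single lattice cube $R$, the frequency cubes $2\om$ that can contribute have $O_n(1)$ overlap, \emph{uniformly} in $R$ and in the collection $\{\om\}$. This is exactly where congruence of the parallelepipeds is used --- if the $\om$ had wildly varying sizes the overlap could blow up, as in the genuine Rubio de Francia theorem which requires a separate, harder argument. For congruent boxes the claim is elementary: the centers of the $\om$ meeting a fixed $2R$ lie in a bounded region, and any box of the common shape containing a given point is determined up to $O(1)$ choices once we know it is a translate of $\om_0$ by the requirement that $2\om$ be concentric with $\om$ and overlap $2R$. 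Granting this, everything else is the routine Calder\'on--Zygmund / maximal-function package already implicit in the ``standard constructions'' the paper allows itself in Section 2.1.
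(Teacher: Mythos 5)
First, be aware that the paper does not prove Theorem \ref{L-P-cubes} at all: it is quoted as a known theorem of C\'ordoba \cite{Cordoba-L-P}, so you are supplying a proof of a cited classical result rather than reconstructing an argument from the text. Your proposal, however, has a genuine gap in both of its branches, and the branch you prefer is based on a false claim. The vector-valued Calder\'on--Zygmund route cannot work: writing $\wh\psi_\om(\xi)=\wh\psi_0(\xi-c_\om)$ for a fixed bump $\wh\psi_0$, the kernel entries are $\psi_\om(x)=e^{ic_\om\cdot x}\psi_0(x)$, so $\|\{\psi_\om(x)\}_\om\|_{\ell^2}=(\#\{\om\})^{1/2}|\psi_0(x)|$ already fails the size condition, and $\nabla\psi_\om$ carries a factor $|c_\om|$ which is unbounded over the collection; the asserted uniform bound $\|\nabla_y\{\psi_\om(x-y)\}_\om\|_{\ell^2}\lesssim|x-y|^{-n-1}$ is false. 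More decisively, a vector-valued Calder\'on--Zygmund argument would yield the estimate for all $1<p<\infty$, whereas the inequality is \emph{false} for $p<2$: in $n=1$ take $\wh f=\Id_{[0,N]}$, so that $\|f\|_p\sim N^{1-1/p}$ while $\big(\sum_j|\psi_j\ast f|^2\big)^{1/2}\sim N^{1/2}|\psi_0|$ has $L^p$ norm $\sim N^{1/2}$. This is precisely why equal-scale Littlewood--Paley theory is not a corollary of classical singular-integral theory and why the hypothesis $p\geq2$ is essential.

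The first branch is also incomplete. The localization to lattice cubes $R$ and the bound $\sum_{\om\sim R}|\psi_\om\ast\phi_R\ast f|^2\lesssim (M(\phi_R\ast f))^2$ reduce matters, via the Fefferman--Stein vector-valued maximal inequality, to $\big\|\big(\sum_R|\phi_R\ast f|^2\big)^{1/2}\big\|_p\lesssim\|f\|_p$ --- but that is exactly Theorem \ref{L-P-cubes} for the standard lattice, so the reduction is circular, and the ``interpolation'' offered to close it does not do so: interpolating the $L^2\to L^2(\ell^2)$ bound against the pointwise maximal ($\ell^\infty$) bound gives $L^p\to L^p(\ell^r)$ only for some $r>2$, which controls the smaller $\ell^r$ norm rather than the $\ell^2$ square function. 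The standard correct argument (C\'ordoba's) runs by duality for $p\geq2$: one first proves the weighted inequality $\sum_\om\int|\psi_\om\ast f|^2w\leq C\int|f|^2\,Mw$ for arbitrary $w\geq0$ --- this is where the separation of the frequency boxes genuinely enters --- and then estimates $\big\|\sum_\om|\psi_\om\ast f|^2\big\|_{p/2}\leq\sup_{\|g\|_q\leq1}C\int|f|^2\,Mg\leq C\|f\|_p^2$ using the $L^q$ boundedness of $M$ with $q=(p/2)'>1$. Finally, note that congruence alone does not give bounded overlap of the $2\om$ (a continuum of translates of a fixed cube contain any given point), so your justification of the overlap claim is not right either; disjointness or finite overlap of $\{\om\}$ is an implicit hypothesis of the theorem, satisfied by the family $\{B_\theta\}$ in Section 7 where it is applied.
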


\vspace{3mm}

The rest of this section is devoted to the decomposition of the Bochner-Riesz operator. There are two steps. In the first step, we will cut the Bochner-Riesz operator in the frequency space into pieces, so that each piece varies little in the angular direction. After that, we will focus on one piece, and use the Bourgain-Guth broad-narrow argument in \cite{Bourgain-Guth-Oscillatory} to decompose the piece into a broad part and a narrow part. The narrow part is easy to handle by induction, so we will set the broad part as our model operator.  

\subsection{Frequency decomposition for the Bochner-Riesz operator}\hfill

We partition the operator $T^\la f$ defined in \eqref{B-R-operator} in the following way: Let $\bq=\{q\}$ be the collection of lattice $1/4$ cubes in $\ZR^n$. We set $\{\psi_q\}$ to be a smooth partition of unity associated to ${\bf q}$ such that $\psi_q$ is supported in $2q$. Let ${\bf q}_1$ be the collection of cubes $q$ that $2q$ intersects $S^{n-1}$, and let ${\bf q}_2$ be the collection of cubes $q$ that $2q$ is contained in $B^{n}(0,1)$. For any cube $q$, define
\begin{equation}
    T_q^\la f(x)=\int_{\ZR^n}(1-|\xi|^2)^\la_+\psi_q(\xi)\wh{f}(\xi)e^{ix\cdot\xi}d\xi.
\end{equation}
We thus can partition $T^{\la}f$ as
\begin{equation}
\label{B-R-partition1}
    T^{\la}f=\sum_{q_1\in{\bf q}_1}T^{\la}_{q_1}f+\sum_{q_2\in{\bf q}_2}T^{\la}_{q_2}f.
\end{equation}
The second part of \eqref{B-R-partition1} is bounded by Hardy-Littlewood maximal function and hence it satisfies $L^p$ estimates. Since there are only finitely many $q$ in ${\bf q}_1$, it suffices to show for any $q\in{\bf q}_1$,  when $n=3$, $\la=\la_{3,p}+3\e$, $p\geq3.25$,
\begin{equation}
    \|T^{\la}_{q}f\|_p\leq C_{p,\e}\|f\|_p.
\end{equation}

Since the function $(1-|\xi|^2)^\la_+$ is radical, without loss of generality, we assume that $q$ intersects the axis $e_3$. In this case, the multiplier of $T^{\la}_q$ can be rewritten as
\begin{equation}
    (1-|\xi|^2)^{\la}_+\psi_q(\xi)=((1-|\bar{\xi}|^2)^{1/2}-\xi_n)^{\la}_+((1-|\bar{\xi}|^2)^{1/2}+\xi_n)^{\la}\psi_q(\xi).
\end{equation}
We absorb $((1-|\bar{\xi}|^2)^{1/2}+\xi_n)^{\la}$ into the smooth function $\psi_q(\xi)$ and partition $((1-|\bar{\xi}|^2)^{1/2}-\xi_n)_+^{\la}$ dyadically so that $T_q^\la f=\sum_{k\geq0}T_{q,k}^\la f$, with
\begin{equation}
    T_{q,k}^\la f(x)=2^{-\la k}\int_{\ZR^n}\phi_k\Big(\frac{\xi_n-(1-|\bar{\xi}|^2)^{1/2}}{2^{-k}}\Big)\psi_q(\xi)\wh{f}(\xi)e^{ix\cdot\xi}d\xi.
\end{equation}
Here $\phi_k$ is a smooth function supported in $[-2,0]$ and satisfies that $|\partial^\al\phi_k|\leq C_\e$ for all multi-indices $\al$ with $|\al|\leq E_\e$. By the triangle inequality, it suffices to show
\begin{equation}
\label{partitioned-estimate1}
    \|T_{q,k}^\la f(x)\|_p\leq C_{p,\e}2^{-\e k}\|f\|_p.
\end{equation}

Observe that when restricting $\bar\xi\in \overline{B^{n-1}(0,1/2})$, the Hessian of the function $-(1-|\bar{\xi}|^2)^{1/2}$ positive definite. This motives us to prove \eqref{partitioned-estimate1} with $(1-|\bar{\xi}|^2)^{1/2}$ replaced by a more general function.

\vspace{3mm}

Let $\Ga\subset B^n(0,2)$ be the graph of a $C^3$ function $\Phi:\overline{B^{n-1}(0,1/2)}\to\ZR$ with $\nabla\Phi(0)=\Phi(0)=0$ and that $-\nabla^2\Phi$ is positive definite. We will next define a collection of multipliers related to the hypersurface $\Ga$. Let $\psi_1(\bar\xi),\psi_2(\xi_n)$ be two smooth functions satisfying the following conditions:
\begin{enumerate}
    \item The support of $\psi_1$ is contained in the ball $B^{n-1}(0,1/2)$, and the support of $\psi_2$ is contained in the interval $[-2,0]$.
    \item $|\wh\psi_1(\bar x)|\lesssim_\e w_{E_\e,B^{n-1}_1}(\bar x)$ and $|\wh\psi_2(x_n)|\lesssim_\e w_{E_\e,B_1^1}(x_n)$.
    \item $|\partial^{\al_1}\psi_1|\lesssim C_\e$ and $|\partial^{\al_2}\psi_2|\lesssim C_\e$, when $|\al_1|,\al_2\leq E_\e$.
\end{enumerate}
Define the multiplier $m(x)$ by
\begin{equation}
\label{multipler-def}
    \wh{m}(\xi)=\psi_1(\bar\xi)\psi_2(R(\xi_n-\Phi(\bar\xi))).
\end{equation}
We denote by $\ZM(R)$ all the multipliers $m$ that have the formulation \eqref{multipler-def}.

Now fix a multiplier $m\in\ZM(R)$, we define a new operator $Sf$ as
\begin{equation}
\label{multiplier-definition}
    Sf(x)=\int_{\ZR^n} e^{ix\cdot \xi}\wh{m}(\xi)\wh{f}(\xi)d\xi.
\end{equation}
Our strongest result is the following theorem:
\begin{theorem}
\label{multiplier-theorem}
Set $n=3$. Let $Sf$ be defined as above and assume $m\in\ZM(R)$. Then for $p\geq3.25$,
\begin{equation}
\label{multiplier-estimate}
    \|Sf\|_p\leq C_\e R^{\la_{3,p}+2\e}\|f\|_p.
\end{equation}
\end{theorem}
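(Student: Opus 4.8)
\textbf{Proof proposal for Theorem \ref{multiplier-theorem}.}

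The plan is to prove the estimate \eqref{multiplier-estimate} for the model operator $S$ by running the Bourgain--Guth broad--narrow decomposition together with an iterated polynomial partitioning induction on the scale $R$. First I would set up the induction: the target estimate is $\|Sf\|_p \leq C_\e R^{\la_{3,p}+2\e}\|f\|_p$ for $p \geq 3.25$, and I would attempt to prove it by induction on $R$ (on a dyadic scale), allowing the constant $C_\e$ to be chosen after all the implicit parameters $\be, d, \de, E_\e$ are fixed. At scale $R$ one decomposes $\Ga$ into $\sim R^{1/2}$-caps $\theta$ and does the standard wave packet decomposition (Section 3) at scale $R$, writing $Sf = \sum_{T} f_T$ where $T$ ranges over $R^{1/2}\times R^{1/2}\times R$ tubes dual to the caps. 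On a ball $B_R$ one splits $|Sf|^p$ into the \emph{broad} part, where the mass comes from caps $\theta$ pointing in $\gtrsim 2$ transverse directions, and the \emph{narrow} part, where everything is concentrated near a lower-dimensional neighborhood. The narrow part is handled by rescaling and the inductive hypothesis at a smaller scale (after a parabolic rescaling that turns a small neighborhood of $\Ga$ into a full copy of the model surface, exactly as in \cite{Bourgain-Guth-Oscillatory}), so the real content is the broad part, which becomes the ``model operator'' referred to in the excerpt.

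Second, for the broad part I would run the iterated polynomial partitioning of Sections 4--5. One chooses a polynomial of degree $d$ (using the modified polynomial partitioning of Section 4, which guarantees $\gtrsim d^3$ cells) adapted to the $L^p$ mass of $Sf$ on $B_R$, partitioning into cells plus a wall $N_{R^{1/2}}(Z)$. For the cellular contribution one has $\sim d^3$ cells each seeing a $d^{-3}$ fraction of the mass but only an $O(d)$ fraction of the tubes (by B\'ezout), so iterating gains a power of $d$ at each step and eventually drives the geometry down to the tangential/transverse dichotomy on the wall. On the wall one separates the \emph{tangential} tubes (those lying in $N_{R^{1/2}}(Z)$ and essentially tangent to $Z$) from the \emph{transverse} ones; the transverse tubes are controlled by a transversality/polynomial-Wolff-type count, and the tangential tubes are where the main new input of the paper enters. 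Here I would invoke the geometric observation about tangential contributions realized via the backward algorithm of Section 7: iterating the partitioning produces nested families of wave packets at scales $R, R^{1-\de}, R^{1-2\de}, \dots$, and the backward algorithm together with dyadic pigeonholing exploits the fact that tangential wave packets at consecutive scales are organized along the variety, yielding an improved bound over the naive $L^2$-based estimate.

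Third, I would assemble the pieces back in $L^p$. Since there is no efficient direct passage from $L^2$ to $L^p$ for Bochner--Riesz (unlike restriction, where one has the extension operator's $L^2$ orthogonality packaged cleanly), I would follow the plan announced in the introduction: combine the square function estimate over caps, the Littlewood--Paley theorem for translated cubes (Theorem \ref{L-P-cubes}), the local $L^2$ estimate (the Lemma in \S2.2), and the Nikodym maximal function bound, with the stated claim that no loss occurs in the Nikodym estimate even below the critical exponent $q=3$. Summing the broad contribution over all cells and walls at all scales of the iteration, collecting the $d$-gains against the $R^\e$-losses from the pigeonholing and the number of iteration steps ($\lesssim \de^{-1}\log(1/R)$ many), and feeding the narrow part into the inductive hypothesis, one closes the induction provided $p \geq 3.25$; the numerology $3.25$ is exactly where the tangential gain balances the transverse count. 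The main obstacle, and the technical heart of the argument, is the last point about tangential tubes: making the backward algorithm of Section 7 actually extract a quantitative gain from the multi-scale tangential geometry — keeping track of how wave packets at scale $R^{1-k\de}$ relate to those at scale $R^{1-(k+1)\de}$ through the iterated varieties, and ensuring the dyadic pigeonholing losses are only $R^{O(\e)}$ — while simultaneously controlling the return to $L^p$ through the Nikodym maximal function without the usual $\e$-loss below $q=3$.
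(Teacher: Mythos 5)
Your proposal follows essentially the same route as the paper: Theorem \ref{multiplier-theorem} is reduced by the Bourgain--Guth broad--narrow decomposition and induction on scales (with the narrow part absorbed via parabolic rescaling, the interpolation bound $\sum_\tau\|f_{\vp_\tau}\|_p^p\lesssim\|f\|_p^p$, and the localization Lemma \ref{local-lemma1}) to the broad estimate of Theorem \ref{broad-theorem}, which is then proved exactly as you outline by iterated polynomial partitioning, the backward algorithm for the tangential contribution, and the Nikodym/Littlewood--Paley assembly back into $L^p$. The only small imprecision is that the broadness dichotomy is run over the $K^{-1}$-caps $\tau$ with $K=C_0R^{\e^{10}}$ (narrow meaning a single cap dominates pointwise, not concentration near a lower-dimensional set), rather than over the $R^{-1/2}$-caps $\theta$ used for the wave packets; this does not affect the structure of the argument.
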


We can deduce \eqref{partitioned-estimate1} from Theorem \ref{multiplier-theorem} immediately by introducing a new partition of unity for the variable $\bar\xi$, and absorbing $\psi_q(\xi)$ in the function $\wh{f}(\xi)$. Therefore, we get \eqref{B-R-estimate2}. \qed

\subsection{The Broad-Narrow argument}\hfill

It remains to show Theorem \ref{multiplier-theorem}. By partitioning our target operator $Sf$ defined in \eqref{multiplier-definition} finer, and combining some rescaling arguments, we can assume that $\Phi(0)=\nabla\Phi(0)=0$, $\nabla^2\Phi(0)=-I_2$, and $|\nabla^3\Phi|\leq 10^{-10}$.

We let $K=C_0 R^{\e^{10}}$ be a large number, where $C_0$ is a big constant we will choose later from induction. Let $\cq^{\bf tau}=\{Q_\tau\}$ be the collection of vertical lattice $K^{-1}$ tubes of infinite length. Namely, $Q_\tau$ has cross-section diameter $\sim K^{-1}$ and length infinity, and the long side of $Q_\tau$ is parallel to the axis $e_3$. We set $\{\psi_{Q_\tau}\}$ be a smooth partition of unity associated to $\cq^{\rm tau}$ such that $\psi_{Q_\tau}$ is supported in $2Q$, $|\partial^\al\psi_{Q_\tau}|\leq C_\e K^{|\al|}$ for $|\al|\leq E_\e$. Hence, by setting $\wh{f}_\tau=\wh{m}_\tau\wh{f}:=\wh{m}\psi_{Q_\tau}\wh{f}$, we have a frequency decomposition of $Sf$ that
\begin{equation}
    Sf=\sum_{\tau}f_\tau.
\end{equation}

Since $\nabla^2\Phi(0)=-I_2$ and $|\nabla^3\Phi|\leq 10^{-10}$, the Fourier support of the function $f_\tau$ in contained $2\tau$, where $\tau$ is a rectangular tube of dimensions $\sim K^{-1}\times K^{-1}\times K^{-2}$. Here $2\tau$ is the rectangle that has the same center as $\tau$, but twice the side length of $\tau$. We let $\cT=\{\tau\}$ be the collection of rectangular tubes that $f_\tau$ is not 0.  We further decompose $\cq^{\bf tau}$ into 100 subsets $\cq_1^{\bf tau},\ldots,\cq_{100}^{\bf tau}$, such that any two vertical tubes $Q_{\tau_1},Q_{\tau_2}\in\cq_j^{\bf tau},j=1,\ldots,100$, have distance $\geq 9K^{-1}$. Thus,
\begin{equation}
\nonumber
    Sf=\sum_{j=1}^{100}S_j f=\sum_{j=1}^{100}\sum_{Q\in\cq_j^{\bf tau}}\int_{\ZR^3} e^{ix\cdot\xi}(\wh{m}\psi_Q)(\xi)\wh{f}(\xi)d\xi=:\sum_{j=1}^{100}\int_{\ZR^3} e^{ix\cdot\xi}\wh{m}_j(\xi)\wh{f}(\xi)d\xi.
\end{equation}
By the triangle inequality, up to a constant loss, it suffices to assume $m=m_j$ and $Sf=S_jf$ for a particular $j$.

\vspace{3mm}

Next, we let $\cq^{\bf theta}=\{Q_\theta\}$ be the collection vertical lattice $R^{-1/2}$ tubes of infinite length. We similarly define $\{\psi_{Q_\theta}(\xi)\}$ to be smooth partition of unity associated to $\cq^{\bf theta}$, with good support and derivative conditions.

Since $|\nabla^2\Phi|\sim1$, the support of the multiplier $\wh{m}\psi_{Q_\theta}$ is contained in a rectangular tube $\theta$, whose dimensions roughly equal to $R^{-1/2}\times R^{-1/2}\times R^{-1}$, and whose shortest side is parallel to the vector $(\nabla\Phi(\overline{c(Q_\theta})),-1)$. We  write $\wh{\vp}_\theta=\wh{m}\psi_{Q_\theta}$ and
\begin{equation}
    f_\theta=\int_{\ZR^3} e^{ix\cdot\xi}\wh\vp_\theta(\xi)\wh{f}(\xi)d\xi
\end{equation}
in short. Thus, we have a finer frequency decomposition of $Sf$ that
\begin{equation}
    Sf=\sum_{Q_\theta\in\cq^{\bf theta}}\int_{\ZR^3} e^{ix\cdot\xi}(\wh{m}\psi_{Q_\theta})(\xi)\wh{f}(\xi)d\xi=\sum_\theta f_\theta.
\end{equation}
We say a rectangular tube $T$ of dimensions $R^{1/2}\times R^{1/2}\times R$ is  $dual$ to $\theta$, if its longer side is parallel to the vector $(\nabla\Phi(\bar{c_Q}),-1)$. Via a standard non-stationary phase method, we can conclude that if $T$ is the rectangular tube dual to $\theta$ centered at the origin, then
\begin{equation}
    |\vp_\theta(x)|\lesssim_\e w_{T,E_\e}(x).
\end{equation}

To conclude, we have two frequency decompositions for the operator $Sf$, 
\begin{equation}
\label{schwartz-fcn-theta}
    Sf=\sum_{\tau\in \cT} f_\tau=\sum_{\tau\in \cT}\sum_{\theta\in2\tau}f_\theta=\sum_\theta f_\theta.
\end{equation}
We denote by $\Theta$ the collection of all $\theta$ that appear in the last summation above.

\vspace{3mm}
Next, we will give the definition of $\al$-broadness.  Supposing $g$ is a function that $\wh{g}$ is supported in the union of $2\tau$, namely, $\cup_{\tau\in\cT}2\tau$. We let $\wh{g}_\tau=\Id_{\tau}\wh{g}$. Then, for a small positive number $\al$, we say a point $x\in\ZR^3$ is $\al$-broad with respect to $g$, if 
\begin{equation}
    \sup_{\tau\in\cT}|g_{2\tau}(x)|\leq \al|g(x)|.
\end{equation}

We let ${\rm Br}_\al g$ be $|g|$ if $x$ is $\al$-broad and $0$ otherwise. We aim to prove the following $\al$-broad estimates in the rest of the paper:
\begin{theorem}
\label{broad-theorem}
Let ${\rm Br}_\al Sf$ be defined as above and let $\al=K^{-\e}$. Assume that $f$ is supported in an $R$ ball in $\ZR^3$. Then for $p\geq3.25$, 
\begin{equation}
\label{broad-estimate}
    \|{\rm Br}_\al Sf\|_{\lp(B_R)}\leq C_\e R^{\la_{3,p}+\e}\|f\|_p.
\end{equation}
\end{theorem}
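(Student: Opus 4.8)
The plan is to prove Theorem~\ref{broad-theorem} by induction on $R$, using the Bourgain--Guth broad--narrow structure together with iterated polynomial partitioning and a backward bookkeeping algorithm. At the top level, I would fix a small scale $K = C_0 R^{\e^{10}}$ as in the setup and split the ball $B_R$ according to whether a point is $\al$-broad or not with respect to the $\tau$-decomposition. On the narrow part the $\al$-broadness fails, so $|Sf(x)| \lesssim \sup_\tau |f_{2\tau}(x)|$ up to the usual $K^{O(1)}$ factor coming from summing over the bounded-overlap $\tau$'s; after parabolic rescaling each $f_{2\tau}$ is, up to affine change of variables, an operator of the same type $S'$ at scale $R/K^2$ acting on a rescaled Schwartz function, so the inductive hypothesis applies and one gains the factor $(R/K^2)^{\la_{3,p}+\e}$. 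The only arithmetic to check is that the loss $K^{O(1)} \cdot (K^2)^{-\la_{3,p}-\e}$ is $\le 1$ for $p \ge 3.25$, which forces the exponent relation and explains the choice $\al = K^{-\e}$ — this is exactly the point where the numerology $3.25$ enters, and it is routine once the parameters are set.

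The substantive part is the broad contribution. Here I would run the modified polynomial partitioning of Section~4 on the measure $|{\rm Br}_\al Sf|^p \,dx$ restricted to $B_R$, producing a polynomial of controlled degree $d = \e^6$ whose zero set cuts $B_R$ into $\sim d^3$ cells of equal mass, plus a wall (the $R^{1/2}$-neighborhood of the variety). For each $\theta$-tube $\tau$, the wave packet $f_\theta$ is either \emph{cellular} (enters $\lesssim d$ cells) or \emph{tangential}/\emph{transverse} to the variety inside the wall. In the cellular regime one iterates: each cell sees a contribution that is again of broad type at a genuinely smaller scale, and the mass-equidistribution of the partition gives a favorable gain per iteration. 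One iterates this $\sim \log R / \log(\text{something})$ times — this is the ``iterated polynomial partitioning'' — until either the residual scale is $\le K^{O(1)}$, in which case one closes by a trivial $L^2 \to L^p$ estimate, or the wave packets become tangential to the successive varieties. The dominant term is the tangential one, and here I would invoke the local $L^2$ estimate (the unlabelled Lemma in Section~2.2, giving $\|f\vp_B\|_2 \lesssim \rho^{1/2}\sigma^{-1/2}\|f\|_2$ for Fourier support in $N_\rho(\Ga)$) together with a Córdoba-type $L^4$ / square-function argument on the $R^{1/2}$-neighborhood of the variety, bringing in the Littlewood--Paley theorem for translated cubes (Theorem~\ref{L-P-cubes}) and the Nikodym maximal function to pass the $L^2$ information back to $L^p$ without loss below the critical index $q=3$.

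The genuinely new ingredient, and the step I expect to be the main obstacle, is controlling the \emph{accumulated} tangential contributions across all scales of the iteration simultaneously. A naive summation over the $\sim\log R$ scales loses a power of $\log R$ (harmless) but more seriously loses track of which wave packets at scale $R^{1/2}$, $R^{1/4}$, \ldots are responsible for the mass, since at each stage the tangential packets are redistributed among the children varieties. To handle this I would set up the \emph{backward algorithm} of Section~7: rather than pushing forward from scale $R$ down to scale $1$, I would record, for each terminal tangential event at the smallest relevant scale, the entire ancestry of cells and tubes that led to it, and then reconstruct the total estimate by summing over terminal events with multiplicities controlled by dyadic pigeonholing. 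Concretely, I would dyadically pigeonhole on (i) the number of tubes $\tau$ tangent to each intermediate variety, (ii) the number of nonzero wave packets per cell, and (iii) the $L^p$-mass concentrated in each branch, losing only $(\log R)^{O(1)} = R^{o(1)}$ in total, which is absorbed into $R^\e$. The delicate point is that the geometric observation one wants — that tangential wave packets at different scales cannot all be ``expensive'' at once, because tangency to a low-degree variety is a strong constraint (a polynomial Wolff-type / two-ends heuristic à la Wang) — must be phrased so that it survives the pigeonholing and the rescalings. Making that bookkeeping consistent, and verifying that the per-scale gains compound to beat the target exponent $\la_{3,p}+\e$ precisely at $p = 3.25$, is where essentially all the work lies; the harmonic-analytic inputs (square function, translated Littlewood--Paley, Nikodym) are then applied as black boxes at the end.
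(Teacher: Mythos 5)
Your proposal correctly identifies the overall architecture of the paper's argument (iterated polynomial partitioning, a C\'ordoba-type bilinear estimate on the terminal tangential pieces, a backward bookkeeping pass with dyadic pigeonholing, and a Nikodym-maximal-function step to return to $L^p$), but it is a roadmap rather than a proof, and two points are genuinely off. First, Theorem \ref{broad-theorem} concerns only the broad part ${\rm Br}_\al Sf$; the broad--narrow splitting and the parabolic-rescaling induction on the narrow part that occupy your first paragraph belong to the deduction of Theorem \ref{multiplier-theorem} from Theorem \ref{broad-theorem}, which is already carried out in Section 2. More importantly, your claim that the exponent $3.25$ is forced by the arithmetic $K^{O(1)}(K^2)^{-\la_{3,p}-\e}\le 1$ in that narrow step is incorrect: that inequality holds for any $p$ with $\la_{3,p}>0$ once $C_0$ is chosen large. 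The numerology $p\ge 3.25$ enters only at the very end, in the optimization of \eqref{final-broad} over the multiplicity parameter $v$ in the two regimes $r^{1/2}> |\cf|v^{-1}$ and $r^{1/2}\le |\cf|v^{-1}$.

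Second, the steps you defer as ``where essentially all the work lies'' are precisely the content of the theorem and cannot be applied as black boxes. Concretely, what is missing is: (i) the sharp bilinear estimate $\int_{F_j}{\rm Bil}(g_{j,tang})^p\lesssim r^{5/2-5p/4}R^{O(\de)}\|g_{j,tang}\|_2^p$ on each terminal tangential piece; (ii) the incidence Lemma \ref{incidence-lemma}, proved via Wolff's hairbrush argument together with Wongkew's theorem, which bounds by $R^{O(\de)}r^{1/2}$ the number of $Rr^{-1/2}$-separated points at which the tangential tubes meet a horizontal plane $L_a$ --- it is this structural fact about tangency to a degree-$O(d)$ variety, not a general Nikodym bound, that makes the maximal-function step lossless below the critical index $q=3$; and (iii) the pair of complementary outputs of the backward algorithm, namely the $L^2$ gain \eqref{backward-l2-final}, which improves with the multiplicity $v$, and the support bound \eqref{backward-l1-final}, which degrades with $v$; the final case analysis exists precisely to resolve this tension. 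Without these three quantitative inputs your outline contains no mechanism that actually produces the exponent $\la_{3,p}+\e$ at $p=3.25$, so the proposal has a genuine gap at its core.
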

As we will see later, the broadness argument allows us to restrict the contribution of ${\rm Br}_\al Sf$ into a thin neighborhood of a collection of varieties, each of which has degree $\sim d=R^{\e^6}$.

\vspace{3mm}

At the end of this section, we will show how Theorem \ref{broad-theorem} implies Theorem \ref{multiplier-theorem}. We need the following localization lemma and will prove it latter.
\begin{lemma}
\label{local-lemma1}
Let $f$ be a Schwartz function defined in $\ZR^3$ and let $Sf$ be defined in \eqref{multiplier-definition}. Assume that for any $R$ ball $B$ in $\ZR^3$, 
\begin{equation}
\label{local-lemma-assumption}
    \|S(f\Id_{B})\|_{L^p(B_R)}\leq C_1\|f\Id_B\|_p.
\end{equation}
Then we have 
\begin{equation}
    \|Sf\|_p\leq  C_\be R^\be C_1\|f\|_p.
\end{equation}
\end{lemma}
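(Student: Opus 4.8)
\textbf{Proof proposal for Lemma \ref{local-lemma1}.}

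The plan is to exploit the fact that the multiplier $\wh m$ is supported in a fixed compact set, so that $Sf$ is essentially a local operator modulo rapidly decaying tails. Concretely, $S$ is convolution with a kernel $m = \wh{\wh m}$; since $\wh m\in\ZM(R)$ is a Schwartz function with $|\wh m|\lesssim 1$ on a region of measure $\lesssim R^{-1}$ (it lives in a slab of thickness $R^{-1}$ over the graph $\Ga$), one checks from condition (2) in the definition of $\ZM(R)$ that $m$ decays like $w_{E_\e, B_1}$ at scale $O(1)$, and in particular $|m(x)|\lesssim_\e R^{C}(1+|x|)^{-E_\e}$ uniformly. Thus $S$ is morally local at scale $1$; the issue is only the polynomial-in-$R$ constant and the slow-but-summable tail.

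First I would fix a finitely overlapping cover of $\ZR^3$ by balls $\{B\}$ of radius $R$, with an associated partition of unity, and write $f = \sum_B f\Id_{\tilde B}$ (or smooth analogue) where $\tilde B$ is a mild dilate. For each fixed output ball $B_0$ of radius $R$, I would split the input as $f = f\Id_{100 B_0} + f\Id_{(100B_0)^c}$. On the main term, the hypothesis \eqref{local-lemma-assumption} (applied to the ball $100B_0$, or to $O(1)$ many $R$-balls covering it) gives $\|S(f\Id_{100B_0})\|_{L^p(B_0)}\le O(C_1)\|f\Id_{100B_0}\|_p$. For the tail term $f\Id_{(100B_0)^c}$, the separation between its support and $B_0$ is $\gtrsim R$, so the kernel bound gives pointwise on $B_0$
\[
|S(f\Id_{(100B_0)^c})(x)|\;\lesssim_\e\; R^{C}\int_{(100B_0)^c}\frac{|f(y)|}{(1+|x-y|)^{E_\e}}\,dy\;\lesssim_\e\; R^{-E_\e/2}\,\big(M f(x)\big)^{\!1/1}\!,
\]
or more carefully a sum over dyadic annuli $|x-y|\sim 2^j R$, $j\ge 0$, each contributing $\lesssim_\e R^{C}(2^j R)^{-E_\e}\cdot(2^jR)^{3/p'}\|f\|_p$ by Hölder; since $E_\e$ is enormous compared to any fixed power, summing in $j$ yields $\|S(f\Id_{(100B_0)^c})\|_{L^p(B_0)}\lesssim_\e R^{-100}\|f\|_p$, say. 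Adding the two contributions, $\|Sf\|_{L^p(B_0)}^p \lesssim_\e (C_1^p + R^{-100p})\sum_{B:\, \tilde B\cap B_0\ne\emptyset}\|f\Id_{\tilde B}\|_p^p$.

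Finally I would sum over the cover $\{B_0\}$. Because the balls $\{B_0\}$ and their dilates $\{\tilde B\}$ have bounded overlap (a constant independent of $R$), $\sum_{B_0}\sum_{B:\,\tilde B\cap B_0\ne\emptyset}\|f\Id_{\tilde B}\|_p^p \lesssim \|f\|_p^p$, and we obtain $\|Sf\|_p \le C_\e\, C_1\|f\|_p$, which is even better than the claimed $C_\be R^\be C_1$. The slight weakening to $C_\be R^\be C_1$ in the statement is presumably a safety margin: if one prefers to avoid the kernel estimate entirely and instead argue by a crude Schwartz-tail / Bernstein argument with an honest loss, or to absorb the $O(1)$-overlap constant and the number of $R$-balls needed to cover $100B_0$ into a power of $R$, one still lands comfortably under $R^\be$. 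The only mild obstacle is bookkeeping the polynomial-in-$R$ factor $R^C$ coming from $\|m\|_\infty$ against the gain from the tail decay; this is harmless since $E_\e = \e^{-\e^{-10000}}$ dwarfs every fixed power of $R$, so the tail term is negligible at any polynomial scale. I would present the kernel-decay estimate as the one computation worth spelling out, and treat the covering/overlap argument as routine.
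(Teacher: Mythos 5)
Your argument hinges on the kernel bound $|m(x)|\lesssim_\e R^{C}(1+|x|)^{-E_\e}$, and this is false. The multiplier $\wh m$ lives in a slab of thickness $R^{-1}$ around the curved surface $\Ga$, so $m$ is genuinely spread over the ball of radius $\sim R$: along directions normal to $\Ga$ one only has $|m(x)|\sim R^{-1}|x|^{-(n-1)/2}$ for $|x|\lesssim R$, and the rapid decay beyond that is in units of $R$, i.e. $|m(x)|\leq C_N|R^{-1}x|^{-N}$ for $|x|\geq R$ (this is exactly \eqref{kernel-decay} in the paper). Consequently, separating input and output by only $O(1)$ dilates of an $R$-ball buys you a gain of $(|x-y|/R)^{-N}\sim 100^{-N}$, an $\e$-dependent \emph{constant}, not a negative power of $R$. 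Your dyadic-annulus computation, which produces the factor $(2^jR)^{-E_\e}$ and hence the claimed $R^{-100}\|f\|_p$ tail, silently uses decay in $|x|$ rather than in $|x|/R$. With the correct kernel bound the tail term at distance $\sim 100R$ is of size $\gtrsim c_\e R^{\text{positive power}}\|f\|_p$ after the H\"older/volume factors, which cannot be absorbed: in the intended application $C_1\sim R^{\la_p+2\e-\be}$ with $\la_p$ close to $0.077$ at $p=3.25$, so a tail of size $R\cdot\|f\|_p$ (let alone $R^{3}$) would destroy the conclusion.

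This also explains why the $R^\be$ loss in the statement is not a ``safety margin'' but structural. To make the far field genuinely negligible one must take the local region to have radius $R^{1+c}$ for some $c>0$ (the paper uses $R^{1+\e^{1500}}$), so that $(|x|/R)^{-N}\leq R^{-cN}$ crushes every polynomial factor. The price is that this region contains $\sim R^{3c}$ translated $R$-cubes, to each of which the hypothesis \eqref{local-lemma-assumption} is applied, and the triangle inequality plus H\"older over these $R^{O(c)}$ terms is precisely what produces the $R^{\e^{1200}}\leq R^{p\be}$ loss in the paper's proof. Your covering/summation scheme in the last paragraph is fine and matches the paper's; the fix you need is to replace $100B_0$ by an $R^{1+\e^{1500}}$-neighborhood, accept the resulting $R^\be$ loss, and use the correct kernel decay \eqref{kernel-decay} only beyond that range.
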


\vspace{3mm}
Now we can show Theorem \ref{multiplier-theorem} via Theorem \ref{broad-theorem}. Our argument relies on the induction on scales method. We let $\la_p=\la_{3,p}$ for simplicity. By Lemma \ref{local-lemma1}, it suffices to show
\begin{equation}
\label{local-desired}
    \|S(f)\|_{L^p(B_R)}\leq C_\e R^{\la_p+2\e}(R^{-\be}C_\be^{-1})\|f\|_p,
\end{equation}
when $f$ is supported in an $R$ ball in $\ZR^3$.

Clearly, for any $x\in\ZR^3$,
\begin{equation}
\label{broad-narrow}
    |Sf(x)|\leq \al^{-1}\sup_{\tau\in\cT}|f_\tau(x)|+{\rm Br}_\al Sf(x).
\end{equation}
Take $p$-th power to both sides and use the fact $l^p\subset l^\infty$ so that
\begin{equation}
\label{broad-narrow-argument}
    \int_{B_R}|Sf|^p\leq 2^pK^{\e}\sum_{\tau\in\cT}\int_{B_R}|f_\tau|^p+2^p\int_{B_R}{\rm Br}Sf^p.
\end{equation}
Recall that
\begin{equation}
    f_\tau=\int_{\ZR^3} e^{ix\cdot\xi}\wh{m}_\tau(\xi)\wh{f}(\xi)d\xi.
\end{equation}
If we let $\vp_\tau$ be a smooth cutoff function of $2\tau$ that $\vp_\tau=1$ on $2\tau$, then $f_\tau=m_\tau\ast f_{\vp_\tau}$, where $f_{\vp_\tau}=\vp_\tau\ast f$. Notice that for each $\tau$, after parabolic rescaling
\begin{equation}
    \cl_\tau:(\bar{\xi},\xi_3)\to\big(\frac{\bar\xi-\overline{c(\tau)}}{K^{-1}},\frac{\xi_3-\nabla\Phi(\overline{c(\tau)})\cdot\bar\xi-\Phi(\overline{c(\tau)})+\nabla\Phi(\overline{c(\tau)})\cdot\overline{c(\tau)}}{K^{-2}}\big),
\end{equation}
the multiplier $m_\tau\circ \cl_\tau^{-1}\in\ZM(RK^{-2})$.

We use \eqref{multiplier-estimate} at the scale $RK^{-2}$ as an induction hypothesis and employ Lemma \ref{local-lemma1} again to obtain
\begin{equation}
    \int_{B_R}|f_\tau|^p\leq  C_\e^p R^{p\la_p+2p\e}K^{-2p\la_p-4p\e}(C_\be^p R^{p\be})\|f_{\vp_\tau}\|_p^p.
\end{equation}
Summing up all $\tau\in\cT$ and multiplying $2^pK^\e$ to both sides, we have
\begin{equation}
\label{induction-tau}
    2^pK^{\e}\sum_{\tau\in\cT}\int_{B_R}|f_\tau|^p\leq C_\e^pR^{p\la_p+2p\e}(2^pK^{-2p\la_p-4p\e}K^\e C_\be^p R^{p\be})\sum_{\tau\in\cT}\|f_{\vp_\tau}\|_p^p.
\end{equation}
Observe that $\|{f_{\vp_\tau}}\|_{L^rl^r}\leq C_r\|f\|_r$ is true for $r=2$ and $r=\infty$. We invoke the real interpolation so that
\begin{equation}
    \sum_{\tau\in\cT}\|f_{\vp_\tau}\|_p^p\leq C_p\|f\|_p^p.
\end{equation}
Plugging this back to \eqref{induction-tau} we get
\begin{equation}
    2^pK^{\e}\sum_{\tau}\int_{B_R}|f_\tau|^p\leq C_\e^pR^{p\la_p+2p\e}(2^pK^{-2p\la_p-4p\e}K^\e C_\be^p R^{p\be}C_p)\|f\|_p^p.
\end{equation}
Now we can take $C_0$ big enough in the definition $K=C_0 R^{\e^{10}}$ to ensure
\begin{equation}
\label{induction-narrow}
    2^pK^{\e}\sum_{\tau}\int_{B_R}|f_\tau|^p\leq C_\e^pR^{p\la_p+2p\e}2^{-p}(R^{-p\be}C_\be^{-p})\|f\|_p^p.
\end{equation}

On the other hand, Theorem \ref{broad-theorem} gives
\begin{equation}
\label{induction-broad}
    \int_{B_R}{\rm Br}_\al Sf^p\leq C_\e'^p R^{p\la_p+p\e}\|f\|_p^p\leq C_\e^p R^{p\la_p+2p\e}4^{-p}(R^{-p\be}C_\be^{-p})\|f\|_p^p.
\end{equation}
Combining \eqref{broad-narrow-argument}, \eqref{induction-narrow} and \eqref{induction-broad}, we prove \eqref{local-desired}. \qed

\vspace{3mm}
We are left with the proof of Lemma \ref{local-lemma1}.
\begin{proof}We know that $Sf=m\ast f$ for a kernel $m\in\ZM(R)$. By the method of non-stationary phase, we get that for $|x|\geq R$, $N=2000\e^{-2000}<E_\e$,
\begin{equation}
\label{kernel-decay}
    |m(x)|\leq C_N|R^{-1}x|^{-N}.
\end{equation}
Let $\cu,\cv$ be two collections of lattices $R$ cubes that cover $\ZR^3$. It suffices to show
\begin{equation}
    \sum_{U\in\cu}\|Sf\|_{L^p(U)}^p=\sum_{U\in\cu}\int\Id_U|\sum_{V\in\cv}m\ast(f\Id_V)|^p\leq C_\be R^{p\be}C_1\sum_{V\in\cv}\|f\Id_V\|_p^p.
\end{equation}

For fixed $U\in\cu$, we sort $V\in\cv$ according to ${\rm dist}(U,V)$. Let $\cv_k(U)=\{V:(R^{1+\e^{1500}})^{k-1}\leq{\rm dist}(U,V)\leq (R^{1+\e^{1500}})^k\}$ for $k\geq0$, and  let $(R^{1+\e^{1500}})^{-1}=0$ for convenience. Thus, by Minkowski's inequality, 
\begin{equation}
\nonumber
    \|Sf\|_{L^p(U)}^p=\int\Id_U|\sum_{V\in\cv}m\ast(f\Id_V)|^p\leq \Big(\sum_{k\geq0}\sum_{V\in\cv_k}\Big(\int\Id_U |m\ast(f\Id_V)|^p\Big)^{1/p}\Big)^p.
\end{equation}
Via a simple translation argument, we can use the hypothesis in Lemma \ref{local-lemma1} to bound those $V\in \cv_0$, and use \eqref{kernel-decay} to bound the other $V$, so that
\begin{equation}
    \|Sf\|_{L^p(U)}^p\leq\Big(\sum_{V\in\cv_0}C_1\|f\Id_V\|_p+\sum_{k\geq1}R^{-1000k}\sum_{V\in\cv_k}\|f\Id_V\|_p\Big)^p.
\end{equation}
Here we use the volume estimate $|U|\lesssim R^3$ and the Hausdorff-Young inequality. Since $|\cv_k|\leq R^{3k}(R^{\e^{1400}})^{3k+3}$, $p<4$, 
\begin{equation}
    \|Sf\|_{L^p(U)}^p\leq \Big(C_NC_1\sum_{k\geq0}R^{\e^{1300}}R^{-900k}\Big(\sum_{V\in \cv_k}\|f\Id_V\|_p^p\Big)^{1/p}\Big)^p,
\end{equation}
which is further bounded by 
\begin{equation}
    C_N^pC_1^pR^{\e^{1300}}\sum_{k\geq0}R^{-800k}\sum_{V\in \cv_k}\|f\Id_V\|_p^p.
\end{equation}
Summing up all $U\in\cu$ we finally have
\begin{equation}
    \|Sf\|_p^p\leq C_N'^pC_1^p R^{\e^{1200}}\sum_V\|f\Id_V\|_p^p=C_N'^pC_1^p R^{\e^{1200}}\|f\|_p^p.
\end{equation}
Noticing that $R^{\e^{1200}}\leq R^{p\be}$, we finish the proof of Lemma \ref{local-lemma1}.
\end{proof}

\section{Wave packet decomposition}
Wave packet decomposition is a standard tool in modern harmonic analysis. There are many elegant expositions on such decomposition, see for instance, \cite{Tao-bilinear} \cite{Guth-restriction-R3}. We will repeatedly use wave packet decomposition at different scales, so it is worthwhile to discuss this idea in a separate section.

\vspace{3mm}

Recall the equation \eqref{schwartz-fcn-theta} in Section 2
\begin{equation}
    Sf=\sum_{\theta\in\Theta} f_\theta.
\end{equation}

We start from the largest scale $R$.  For each $\theta$, we let $\ZT_\theta$ be the collection of lattice rectangular tubes dual to $\theta$. Let $\{\Id_T^\ast\}_{T\in\ZT_\theta}$ be a smooth partition of unity such that $\wh{\Id_T^\ast}$ is supported in $\theta$, $\Id_T^\ast\geq c$ on $3T$ and $\Id_T^\ast\leq C_\e w_{T,E_\e}$. As a result, we can write $f_\theta$ as a sum of wave packets 
\begin{equation}
    f_\theta=\sum_{T\in\ZT_\theta}f_\theta\Id^\ast_T:=\sum_{T\in\ZT_\theta}f_{\theta,T}.
\end{equation}

It follows that, for $1\leq p\leq100$ and any $\ZT_\theta'\subset\ZT_\theta$, we have
\begin{equation}
    \sum_{T\in\ZT_\theta'}|\Id_T^\ast|^p\leq C_\e,
\end{equation}
which implies
\begin{equation}
\label{wave-packet-orthogonality}
    \sum_{T\in\ZT_{\theta}'}\|f_{\theta,T}\|_p^p\leq C_\e\|f_\theta\|_p^p.
\end{equation}
Conversely, we have
\begin{equation}
\label{lp-almost-orthogonality}
    \big\|\sum_{T\in\ZT_\theta'}f_{\theta,T}\big\|_p^p\leq C_\be R^{\be}\sum_{T\in\ZT_\theta'}\|f_{\theta,T}\|_p^p+O\big(R^{-E_\e^{1/2}}\|f_\theta\|_2^p\big).
\end{equation}
This follows from a similar argument in the proof of Lemma \ref{local-lemma1}. When $p=2$, we can sum up all the $\theta\in\Theta$ and invoke Plancherel's theorem to have the almost $L^2$-orthogonality
\begin{equation}
\label{L2-orthogonality}
    \big\|\sum_{\theta\in\Theta}\sum_{T\in\ZT_\theta'}f_{\theta,T}\big\|_2^2\leq C_\be R^{\be}\sum_{\theta\in\Theta}\sum_{T\in\ZT_\theta'}\|f_{\theta,T}\|_2^2+O\big(R^{-E_\e^{1/2}}\|Sf\|_2^2\big).
\end{equation}

The Fourier support of each wave packet $f_{\theta,T}$ is a subset of $2\theta$. We let $\ZT=\cup_\theta\ZT_\theta$ be the union of all possible tubes. Sometimes we will write $f_T=f_{\theta,T}$ simply because the information of $\theta$ is implicitly contained in $T$. Also, for any tube $T\in\ZT$, we would write $T_\theta=T$ to indicate that $T$ is dual to $\theta$. We define the direction of a wave packet $f_{T}$ by the direction of the longer side of $T$. Heuristically, we can think of a wave packet $f_{\theta,T}$ as $\|f_{\theta,T}\|_\infty\chi_T$. 

We will repeatedly use \eqref{wave-packet-orthogonality} and \eqref{L2-orthogonality} in the rest of the paper. Since the loss $R^\be$ from Schwartz tails in our setting is negligible, we will use \eqref{L2-orthogonality} without mentioning the loss $R^\be$. At this stage, we finish the wave packet decomposition for the function $Sf$ at the scale $R$. 

\vspace{3mm}

For any intermediate scale $R^\e\leq r\leq R$, assuming with the same $\Theta$, we have another function 
\begin{equation}
    g=\sum_{\theta\in\Theta} g_\theta.
\end{equation}
We also assume that the Fourier support of each $g_\theta$ is contained in $N_{r^{-1}}(2\theta)$. We let $\cq^{\bf omega}=\{Q_\om\}$ be the collection of lattice $r^{-1/2}$ vertical rectangular tubes of infinite length. Then, as we seen before, those $\theta$ who fall into a $Q_\om$ are automatically contained in a rectangular tube $\om$ of dimensions roughly equal to $r^{-1/2}\times r^{-1/2}\times r^{-1}$. We define $g_\om$ to be the sum of functions $g_\theta$ whose Fourier supports are contained in $\om$, and possibly some extra functions $g_\theta$ whose Fourier supports intersect the boundary of $\om$, as we require
\begin{equation}
    g=\sum_{\om}g_\om.
\end{equation}
We will use $\theta\sim\om$ to indicate that $g_\theta$ is added in the function $g_\om$. The exact formula for $g_\om$ is not important to us. We will only use the properties that $g_\om$ is a sum of $\sim Rr^{-1}$ many functions $g_\theta$, and that for any two rectangular tubes $\theta,\om$ with $\theta\sim\om$, the directions of the shortest sides of both $\theta$ and $\om$ make an angle $\lesssim r^{-1/2}$.

For each $\om$, we let $\ZI_\om$ be the collection of lattice rectangular tubes dual to $\om$. Let $\{\Id_I^\ast\}_{I\in\ZI_\om}$ be a smooth partition of unity such that $\wh{\Id_I^\ast}$ is supported in $\om$, $|\Id_I^\ast|\leq C_\e w_{I,E_\e}$ and $|\Id_I^\ast|\geq c$ on $3I$. Consequently, for $1\leq p\leq100$,
\begin{equation}
    \sum_I|\Id_I^\ast|^p\leq C_\e.
\end{equation}
We write
\begin{equation}
    g_\om=\sum_{I\in\ZI_\om}g_\om\Id^\ast_I:=\sum_{I\in\ZI_\om}g_{\om,I}.
\end{equation}
The Fourier support of each $g_{\om,I}$ is contained in $2\om$. Similarly, for any $\ZI_\om'\subset\ZI_\om$, 
\begin{equation}
    \sum_{I\in\ZI_{\om}'}\|g_{\om,I}\|_p^p\leq C_\e\|g_\om\|_p^p,
\end{equation}
\begin{equation}
    \big\|\sum_{I\in\ZI_\om'}g_{\om,I}\big\|_p^p\leq C_\be r^{\be}\sum_{I\in\ZI_\om'}\|g_{\om,I}\|_p^p+O\big(r^{-E_\e^{1/2}}\|g_\om\|_2^p\big),
\end{equation}
and
\begin{equation}
\label{L2-orthogonality2}
    \big\|\sum_{\om}\sum_{I\in\ZI_\om'}g_{\om,I}\big\|_2^2\leq C_\be r^{\be}\sum_{\om}\sum_{I\in\ZI_\om'}\|g_{\om,I}\|_2^2+O\big(r^{-E_\e^{1/2}}\|g\|_2^2\big).
\end{equation}
Since $r^{-E_\e^{1/2}}\leq R^{-\e E_\e^{1/2}}\leq R^{-E_\e^{1/4}}$, the error term is always negligible. For convenience, if $\|g\|_2\lesssim_\e R^{10}\|Sf\|_2$, we will similarly drop the error term automatically afterwards. We let $\ZI=\cup_\om\ZI_\om$  and finish the wave packet decomposition for the function $g$ at the scale $r$.

\section{More on polynomial partitioning}
In this section, we will state and prove a modification of Guth's original polynomial partitioning argument in \cite{Guth-restriction-R3}, and use this modified polynomial partitioning in the rest of the paper.

First, we make some definitions regarding to the zero sets of polynomials.
\begin{definition}
Suppose $Q_1,\ldots,Q_k$ are polynomials in $\ZR^n$. We say $Z(Q_1,\ldots,Q_k)$ is a transverse complete intersection if for any $x\in Z(Q_1,\ldots,Q_k)$, the vectors $\nabla Q_1(x),\ldots,\nabla Q_k(x)$ are linearly independent.
\end{definition}
\begin{definition}
We say a polynomial $P$ in $\ZR^n$ is non-singular, if $\nabla P(x)\not=0$ for any $x\in Z(P)$.
\end{definition}

The main result of this section is the following proposition.
\begin{proposition}
\label{final-partition}
Let $g$ be a non-negative $L^1$ function in $\ZR^n$, supported in the ball $B^n(0,R)$. Then for any $d\in\ZZ^+$, there exists a polynomial $P$ with degree $O(d)$, such that
\begin{enumerate}
    \item There are $\sim d^n$ many cells $O$ contained in $\ZR^n\setminus Z(P)$, satisfying 
    \begin{equation}
        \int_{O} g\sim d^{-n}\int_{\ZR^n}g\,.
    \end{equation}
    \item Each of these cells $O$ in {\rm (1)} lies in a cube in $B^n(0,2R)$ of diameter $\sim Rd^{-1}$.
\end{enumerate}
\end{proposition}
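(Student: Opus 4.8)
The plan is to take $P=P_1P_2$ as a product of two polynomials, each of degree $O(d)$: a ``ham‑sandwich'' factor $P_1$ which guarantees that no cell of $\ZR^n\setminus Z(P)$ can carry more than $\sim d^{-n}\int g$ of $g$‑mass, and a ``grid'' factor $P_2$ which forces every cell to sit inside a cube of side $\sim Rd^{-1}$. The $\sim d^n$ cells required by item (1) are then not produced as an honest equipartition but simply \emph{found} among the components of $\ZR^n\setminus Z(P)$ by counting; this is the device that keeps $\deg P=O(d)$, since $P_1$ needs only to bound the mass of each cell from above, not to equidistribute it (the latter would naively cost an extra $\log d$ factor in the degree). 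We may assume $\int g>0$.

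To build $P_1$ I would run the usual iterated polynomial ham‑sandwich procedure. With $\mu=g\,dx$, bisect $\mu$ by a nonzero linear polynomial $Q_1$; apply the polynomial ham‑sandwich theorem (via Borsuk--Ulam) to the two sign regions of $Q_1$ to bisect both simultaneously by a polynomial $Q_2$ of degree $O(2^{1/n})$; continuing, after $j:=\lceil n\log_2 d\rceil$ rounds one obtains $P_1:=Q_1\cdots Q_j$, of degree $\lesssim\sum_{i=1}^{j}2^{i/n}\lesssim 2^{j/n}\sim d$, such that the $2^j\sim d^n$ sign regions $\{\,s_1Q_1>0,\dots,s_jQ_j>0\,\}$, $s_i\in\{\pm1\}$, each have $\mu$‑measure exactly $2^{-j}\int g\le 2d^{-n}\int g$. (Absolute continuity of $\mu$ gives $\mu(Z(Q_i))=0$ at each stage, so the bisections are exact, and the $Q_i$ are nonzero.) Since $Z(P_1)=\bigcup_i Z(Q_i)$, every connected component of $\ZR^n\setminus Z(P_1)$ lies in a single sign region, hence has $g$‑mass $\le 2d^{-n}\int g$.

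For $P_2$ I would take the grid polynomial $P_2(x)=\prod_{i=1}^{n}\prod_{|k|\le 2d}\big(x_i-kRd^{-1}\big)$, of degree $O(d)$, whose zero set is a grid of axis‑parallel hyperplanes partitioning $[-2R,2R]^n$ into $\sim d^n$ congruent open cubes of side $Rd^{-1}$. A component $O$ of $\ZR^n\setminus Z(P_1P_2)$ is contained both in a sign region of $P_1$ and in one of these cubes; moreover if $\int_O g>0$ then $O$ meets $\mathrm{supp}\,g\subset B^n(0,R)$, so the cube containing $O$ is one of those meeting $[-R,R]^n$ and therefore lies in $B^n(0,2R)$ with diameter $\sim Rd^{-1}$. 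This is item (2) for every cell of positive mass, in particular for the cells produced next.

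Finally, the counting step. By the standard bound on the number of connected components of the complement of a real hypersurface of degree $D$ (of Milnor--Thom--Warren type), $\ZR^n\setminus Z(P_1P_2)$ has at most $C_n(\deg P_1P_2)^n\le C_1d^n$ components, and each of them has $g$‑mass $\le 2d^{-n}\int g$ by the second paragraph. Call a component \emph{heavy} if its $g$‑mass is $\ge c\,d^{-n}\int g$, where $c:=(2C_1)^{-1}$. The non‑heavy components together carry at most $C_1d^n\cdot c\,d^{-n}\int g=\tfrac12\int g$, so the heavy ones carry $\ge\tfrac12\int g$; since each carries $\le 2d^{-n}\int g$, there are at least $\tfrac14d^n$ of them and at most $C_1d^n$. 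These $\sim d^n$ heavy cells each satisfy $\int_O g\sim d^{-n}\int g$ and, having positive $g$‑mass, lie in cubes of $B^n(0,2R)$ of diameter $\sim Rd^{-1}$ — exactly items (1) and (2). I expect the degree bookkeeping in the second paragraph to be the only genuinely delicate point: the tension is between equidistributing the mass (which would take $\sim\log d$ bisection rounds and blow the degree up by a logarithm) and controlling diameters; the resolution is precisely to extract equal‑mass cells by counting rather than to build a true equipartition, so that $P_1$ is only asked for the uniform upper bound $2^{-j}\int g$, which the geometric growth $2^{i/n}$ of the round degrees supplies within total degree $O(d)$.
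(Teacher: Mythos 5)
Your proof is correct, and its skeleton (iterated polynomial ham sandwich for the per-cell mass upper bound, an explicit grid of hyperplanes for the diameter control, a count of connected components, and an extraction of a positive proportion of comparable-mass cells) matches the paper's. The one genuinely different choice is the component count, which is the crux: you invoke the Milnor--Thom/Warren bound that $\ZR^n\setminus Z(P)$ has $O((\deg P)^n)$ components, whereas the paper's Section 4 is explicitly organized to \emph{avoid} that theorem. The paper instead perturbs each ham-sandwich polynomial to a non-singular one via Sard's theorem (Lemmas \ref{dense-lemma}--\ref{dense-lemma-2}), arranging all the zero sets into transverse complete intersections at the price of only approximate bisection, and then counts components by enumerating their minimal-dimensional boundary faces with a generalized B\'ezout theorem. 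Your route is shorter and is essentially Wang's original proof, which the paper acknowledges as valid; the paper's buys self-containedness (modulo B\'ezout and Sard) at the cost of the perturbation machinery. Your final extraction also differs slightly — you keep exact bisections (no non-singularity needed), use only the uniform upper bound $2^{-j}\int g$ per component, and select ``heavy'' cells by a mass-counting argument, while the paper double-pigeonholes through the sign regions $U$ — but both arguments rest on the same two inputs (mass upper bound per component plus component count) and both are sound. One cosmetic remark: your closing worry about a $\log d$ loss from ``true equipartition'' is a slight misdiagnosis — the sign regions are exactly equidistributed within degree $O(d)$; the real issue is that sign regions need not be connected, which is precisely what the extraction step repairs — but this does not affect the correctness of the argument you actually give.
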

This proposition was proved by Wang \cite{Wang-restriction-R3}, via the Milnor-Thom theorem. Here we will give another proof of the proposition, by modifying Guth's original argument for polynomial partitioning, so that we will not need to use Milnor and Thom's result. Our argument relies on a generalization of B\'ezout's theorem.

\vspace{3mm}
The rest of this section is devoted to the proof of Proposition \ref{final-partition}. We begin with some preparations. Recall the polynomial ham sandwich theorem proved in \cite{Guth-restriction-R3}.
\begin{theorem}
\label{polynomial-sandwich}
Let $g_1,\ldots,g_N\in L^1(\ZR^n)$. Then there exist a polynomial $P$ of degree $\leq C_n N^{1/n}$ such that for each $g_j$,
\begin{equation}
    \int_{\{P>0\}}g_j=\int_{\{P<0\}}g_j\,.
\end{equation}
\end{theorem}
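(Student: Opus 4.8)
The plan is to follow the classical route to the polynomial ham sandwich theorem via the Borsuk--Ulam theorem together with a Veronese-type embedding, exactly as hinted in the introduction. First I would fix the degree: let $D$ be the least positive integer for which $\binom{n+D}{n}-1\geq N$. Since $\binom{n+D}{n}\sim D^n/n!$, this forces $D\leq C_nN^{1/n}$, which is precisely the degree bound claimed. Write $M=\binom{n+D}{n}-1$ and let $v_1,\ldots,v_M$ be an enumeration of all monomials in $x\in\ZR^n$ of (total) degree between $1$ and $D$. To each unit vector $a=(a_0,a_1,\ldots,a_M)\in S^M\subset\ZR^{M+1}$ I associate the polynomial $P_a(x)=a_0+\sum_{i=1}^{M}a_iv_i(x)$, of degree $\leq D$. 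The key structural observation is that $P_{-a}=-P_a$, so $\{P_{-a}>0\}=\{P_a<0\}$ and $\{P_{-a}<0\}=\{P_a>0\}$.

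Next I would define $F\colon S^M\to\ZR^N$ by $F_j(a)=\int_{\{P_a>0\}}g_j-\int_{\{P_a<0\}}g_j$. The sign flip above immediately gives $F(-a)=-F(a)$, so $F$ is odd. The crucial analytic point is the continuity of $F$. Here I would use that the functions $1,v_1,\ldots,v_M$ are linearly independent on $\ZR^n$, so $P_a\not\equiv 0$ for every $a\in S^M$; hence the zero set $Z(P_a)$ has Lebesgue measure zero. If $a_k\to a$ in $S^M$, then for each $x\notin Z(P_a)$ we have $P_{a_k}(x)\to P_a(x)$ with the same sign eventually, so $\Id_{\{P_{a_k}>0\}}\to\Id_{\{P_a>0\}}$ pointwise almost everywhere; since $g_j\in L^1$, dominated convergence (with dominating function $|g_j|$, independent of $k$) yields $\int_{\{P_{a_k}>0\}}g_j\to\int_{\{P_a>0\}}g_j$, and similarly for the $\{P_a<0\}$ integral. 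Thus $F$ is continuous.

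Finally I would invoke Borsuk--Ulam. Because $M\geq N$ by our choice of $D$, every continuous odd map $S^M\to\ZR^N$ must vanish somewhere: otherwise $F/|F|$ would be a continuous odd map $S^M\to S^{N-1}\hookrightarrow S^{M-1}$, contradicting the non-existence of a continuous odd map $S^M\to S^{M-1}$. Choosing $a^\ast$ with $F(a^\ast)=0$, the polynomial $P=P_{a^\ast}$ has degree $\leq D\leq C_nN^{1/n}$ and satisfies $\int_{\{P>0\}}g_j=\int_{\{P<0\}}g_j$ for every $j$, which is the assertion of the theorem.

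The main obstacle is not conceptual but the continuity verification in the second step: one must pin down that the only points where $\Id_{\{P_{a_k}>0\}}$ can fail to converge to $\Id_{\{P_a>0\}}$ lie in $Z(P_a)$, that this set is null because $P_a$ is a nonzero polynomial, and that the dominated convergence hypothesis holds uniformly in $k$. Everything else is bookkeeping: the binomial-coefficient asymptotics $\binom{n+D}{n}\sim_n D^n$ giving the degree bound, and the standard reduction of the $\ZR^N$-valued Borsuk--Ulam statement to the sphere-to-sphere version. I should also remark that, since each $g_j$ is absolutely continuous with respect to Lebesgue measure and $Z(P)$ is null, the conclusion is equivalent to the bisection statement $\int_{\{P>0\}}g_j=\tfrac12\int g_j$, which is the form used later.
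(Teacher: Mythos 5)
Your proof is correct, and it is essentially the standard Stone--Tukey argument (Veronese-type parametrization of degree-$\leq D$ polynomials by a sphere, oddness, continuity via dominated convergence off the null set $Z(P_a)$, then Borsuk--Ulam), which is exactly the proof in \cite{Guth-restriction-R3} that the paper relies on; the paper itself states Theorem \ref{polynomial-sandwich} without proof, citing that reference. No substantive differences to report.
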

Let ${\rm Poly}_d(\ZR^n)$ be the vector space of polynomials in $\ZR^n$ with degree at most $d$. Guth showed in \cite{Guth-restriction-R3} that non-singular polynomials with degree at most $d$ are dense in ${\rm Poly}_d(\ZR^n)$. As a generalization, we have the following lemma
\begin{lemma}
\label{dense-lemma}
Let $M$ be a smooth manifold in $\ZR^n$ with dimension at least 1. Assuming $V$ is an affine subspace of ${\rm Poly}_d(\ZR^n)$ such that $V+c=V$ for any $c\in\ZR$. Let $V'\subset V$ be the set of polynomials that $\nabla_MP(x)\not=0$ for any $P\in V'$ and any $x\in M\cap Z(P)$. Here $\nabla_M$ is the gradient operator on the smooth manifold $M$. Then $V'$ is a dense subset in $V$. Also, the complement of $V'$ with respect to $V$ has measure 0.
\end{lemma}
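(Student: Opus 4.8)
The plan is to deduce the lemma from Sard's theorem applied to a suitable incidence variety, following (and generalizing) Guth's density argument in \cite{Guth-restriction-R3}, of which this is the special case $M=\ZR^n$, $V={\rm Poly}_d(\ZR^n)$. The goal is to show that for almost every $P\in V$ the restriction $P|_M$ has $0$ as a regular value on $M$; the hypothesis $V+c=V$ will be the one structural input that puts the relevant evaluation map in general position.

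First I would introduce the smooth evaluation map $F\colon M\times V\to\ZR$, $F(x,P)=P(x)$, and check that it is a submersion. For $(x,P)\in M\times V$, $v\in T_xM$, and $\dot P$ in the direction space $\vec V$ of the affine subspace $V$, the differential is $dF_{(x,P)}(v,\dot P)=\nabla_M P(x)\cdot v+\dot P(x)$; since $V+c=V$ for every $c\in\ZR$, the space $\vec V$ contains all constant polynomials, so taking $\dot P\equiv 1$ already gives a nonzero value. Hence $F$ is a submersion, so $\Sigma:=F^{-1}(0)=\{(x,P):x\in M\cap Z(P)\}$ is a smooth submanifold of $M\times V$ of codimension $1$, and it is second countable since it sits inside $\ZR^n\times V$.

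Next I would apply Sard's theorem to the projection $\pi\colon\Sigma\to V$, $(x,P)\mapsto P$, after checking the usual dictionary that the critical values of $\pi$ are exactly $V\setminus V'$. For this: $T_{(x,P)}\Sigma=\ker dF_{(x,P)}=\{(v,\dot P):\nabla_M P(x)\cdot v+\dot P(x)=0\}$ and $d\pi_{(x,P)}(v,\dot P)=\dot P$; if $\nabla_M P(x)\neq 0$ then the functional $v\mapsto\nabla_M P(x)\cdot v$ on $T_xM$ is onto $\ZR$, so every $\dot P\in\vec V$ is hit by $d\pi_{(x,P)}$, whereas if $\nabla_M P(x)=0$ for some $x\in M\cap Z(P)$ then the image of $d\pi_{(x,P)}$ omits every $\dot P$ with $\dot P(x)\neq 0$, and such $\dot P$ (e.g.\ a nonzero constant) lies in $\vec V$. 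Thus a polynomial $P\in V$ is a regular value of $\pi$ precisely when it lies in $V'$ (polynomials not in the image of $\pi$, i.e.\ with $M\cap Z(P)=\varnothing$, are vacuously regular values and also lie in $V'$). Sard's theorem then gives that $V\setminus V'$ has measure zero in $V$, and since $V$ is finite dimensional its complement $V'$ is dense; both assertions of the lemma follow.

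I do not expect a deep obstruction, but three points need care. The only place the hypothesis $V+c=V$ is used is the submersion claim (and, equivalently, the ``onto'' direction of the regular-value dictionary); without it the lemma can fail, so this must be handled cleanly. Second, to invoke Sard on $\pi$ one needs $C^k$ regularity with $k\geq\dim M=\dim\Sigma-\dim V+1$, which is automatic because polynomials are $C^\infty$ and $M$ is smooth; compactness of $M$ is not needed, $\sigma$-compactness (automatic for a submanifold of $\ZR^n$) suffices for the measure-zero conclusion. Third, the hypothesis $\dim M\geq 1$ is essential, since it guarantees $\nabla_M$ is not the zero operator; for a $0$-dimensional $M$ the statement is simply false.
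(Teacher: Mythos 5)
Your proof is correct and is essentially the paper's argument: both are the Sard-type parametric transversality argument applied to the evaluation map on $M\times V$, with the hypothesis $V+c=V$ supplying the constant directions in $\vec V$. The only difference is packaging: the paper applies Sard to $(x,P)\mapsto(P(x),P)\in\ZR\times V$ and then uses Fubini together with the translation $P\mapsto P-c$ at the end, whereas you verify up front that $(x,P)\mapsto P(x)$ is a submersion and apply Sard to the projection of the incidence variety $F^{-1}(0)$ onto $V$.
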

\begin{proof}
We consider the map $\cp:M\times V\to \ZR\times V$, $(x,P)\mapsto (P(x),P)$. Since $M$ is a smooth manifold, the map $\cp$ is smooth. By Sard's theorem, regular values of $\cp$ are dense in $\ZR\times V$. Notice that $(c,P)$ is a regular value of $\cp$ if and only if $P(x)=c$ and $\nabla_MP(x)\not=0$. By Fubini's theorem, there is a number $c\in\ZR$ such that for almost every polynomial $P\in V$, $(c,P)$ is a regular value of $\cp$. This implies $\nabla_M(P(x)-c)\not=0$ on the set $\{x\in M:P(x)-c=0\}$, for almost every polynomial $P\in V$. Since the polynomial $P-c$ also belongs to $V$, we can conclude that $V'$ is dense and its complement with respect to $V$ has measure 0. 
\end{proof}
When $M$ is a point, similar to Lemma \ref{dense-lemma}, we have
\begin{lemma}
\label{dense-lemma-2}
Assume that $M=(y_1,\ldots,y_n)$ is a point in $\ZR^n$. Let $V$ be a subset of ${\rm Poly}_d(\ZR^n)$ such that $P(M)\not=0$ for any polynomial $P\in V$. Then $V$ is dense in ${\rm Poly}_d(\ZR^n)$. Also, the complement of $V$ with respect to ${\rm Poly}_d(\ZR^n)$, $V^c$, has measure 0.
\end{lemma}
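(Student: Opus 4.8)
The plan is to bypass the Sard-theorem argument of Lemma \ref{dense-lemma}, which degenerates when the ``manifold'' $M$ is a single point (the tangential gradient $\nabla_M$ becomes vacuous), and instead to reduce the statement to a one-line fact of linear algebra. As in Lemma \ref{dense-lemma}, where $V'$ was understood to be the set of \emph{all} polynomials with $\nabla_M P\ne 0$ on $M\cap Z(P)$, here $V$ should be read as the set of all polynomials in ${\rm Poly}_d(\ZR^n)$ that do not vanish at $M$.

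First I would consider the evaluation functional $\ell\colon {\rm Poly}_d(\ZR^n)\to\ZR$, $\ell(P)=P(M)$. This is linear in the coefficients of $P$, hence continuous, and it is not identically zero since the constant polynomial $1\in{\rm Poly}_d(\ZR^n)$ satisfies $\ell(1)=1$. Therefore $V^c=\{P: P(M)=0\}=\ker\ell$ is a linear subspace of ${\rm Poly}_d(\ZR^n)$ of codimension $1$, in particular a proper closed subspace.

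Next I would use that ${\rm Poly}_d(\ZR^n)$ is finite-dimensional (of dimension $\binom{n+d}{n}$), identified with a Euclidean space through the coefficient vector, so that ``measure'' means Lebesgue measure. A proper linear subspace of a finite-dimensional Euclidean space is closed, nowhere dense, and Lebesgue-null; hence $V^c$ has measure $0$, and $V$, being the complement of a closed proper subspace, is open and dense. This gives both conclusions of the lemma.

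There is essentially no obstacle here: the only point requiring care is recognizing that the genericity mechanism of Lemma \ref{dense-lemma} collapses, when $\dim M=0$, to the nonvanishing of a single linear functional, after which the claim is immediate. (Alternatively, one can stay within the Sard-type framework of Lemma \ref{dense-lemma}: the differential of the linear map $\ell$ is surjective at every point, so $0$ is a regular value of $\ell$ and $\ell^{-1}(0)=V^c$ is a smooth hypersurface through the origin; but the direct linear-algebra argument above is shorter.)
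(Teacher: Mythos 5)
Your proof is correct, and it takes a different (and in fact cleaner) route than the paper's. The paper argues that if $P(M)=0$ then $P$ must be divisible by $(x_i-y_i)$ for some $i$, and hence that $V^c$ sits inside a union of $n$ affine subspaces of dimension $\binom{n+d-1}{n}$. That factorization claim is only valid for $n=1$ (for $n\geq 2$ take $M=0$ and $P=x_1+x_2$, which vanishes at $M$ but is divisible by neither coordinate), and the stated dimension count does not match the actual set $\{P:P(M)=0\}$, which is a hyperplane of dimension $\binom{n+d}{n}-1$. Your argument identifies $V^c$ exactly as $\ker\ell$ for the evaluation functional $\ell(P)=P(M)$, which is nonzero because $\ell(1)=1$; a proper linear subspace of a finite-dimensional space is closed, nowhere dense, and Lebesgue-null, so both conclusions follow at once. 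This is the standard argument, it repairs the gap in the paper's own proof, and your reading of the (loosely worded) hypothesis --- that $V$ is \emph{the} set of all $P$ with $P(M)\neq 0$ rather than an arbitrary subset of it --- is clearly the intended one, consistent with how $V'$ is used in Lemma \ref{dense-lemma} and in the construction of the cutting polynomials $Q_{ij}$.
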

\begin{proof}
Note that as a vector space, ${\rm Poly}_d(\ZR^n)$ has dimensions $\binom{n+d}{n}$. If $P\in {\rm Poly}_d(\ZR^n)$ satisfying $P(M)=0$, then $P(x)$ must have $(x_i-y_i)$ as its factor, for an $i\in\{1,\ldots,n\}$. This shows that $V^c$ is contained in the union of $n$ affine subspaces of ${\rm Poly}_d(\ZR^n)$, each of which has dimensions $\binom{n+d-1}{n}<\dim({\rm Poly}_d(\ZR^n))$. Therefore, $V^c$ has measure 0, implying that $V$ is dense in ${\rm Poly}_d(\ZR^n)$.
\end{proof}

We also need a generalization of B\'ezout's theorem.
\begin{lemma}
Let $P_1,\ldots,P_k\in{\rm Poly}_d(\ZR^n)$ be a sequence of non-singular polynomials, such that $Z(P_1,\ldots,P_k)$ is a transverse complete intersection. If we set $\#(Z(P_1,\ldots,P_k))$ be the number of connected components of $Z(P_1,\ldots,P_k)$, then 
\begin{equation}
    \label{generalized-bezout}
    \#(Z(P_1,\ldots,P_k))\leq\prod_{i=1}^k\deg(P_k).
\end{equation}
\end{lemma}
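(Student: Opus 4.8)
The plan is to deduce the bound on the number of connected components of a transverse complete intersection from the classical Bézout theorem over $\mathbb C$, by a Morse-theoretic argument on the real variety. First I would observe that, since $Z(P_1,\ldots,P_k)$ is a transverse complete intersection, the implicit function theorem shows it is a smooth submanifold of $\mathbb R^n$ of dimension $n-k$; denote it $Z$. The strategy is to count connected components of $Z$ by exhibiting, on each component, a point at which a generic linear functional attains a local extremum when restricted to $Z$, and then to bound the number of such critical points.

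The key steps, in order, are as follows. Step 1: Compactness reduction. The components of $Z$ need not be compact, so I would first intersect with a large ball, or better, replace the naive linear functional by a generic rational/polynomial function that is proper on each component — concretely, pick a generic linear form $\ell(x)=v\cdot x$ and on each connected component $Z'$ of $Z$ choose a point where $\ell|_{Z'}$ is maximized; if $Z'$ is unbounded one instead uses that $\ell|_{Z'}$ restricted to $Z'\cap \overline{B^n(0,R)}$ attains its max on the interior for $R$ large since $g$ (hence the relevant part of $Z$) lives in $B^n(0,R)$ — more cleanly, one perturbs so that each component that matters is compact, or one uses a generic quadratic $\ell(x)=|x-p|^2$ for generic $p$, whose restriction to any nonempty closed component attains a minimum. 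Step 2: Each such extremal point $x_0\in Z'$ is a critical point of $\ell|_Z$, meaning $\nabla\ell(x_0)$ lies in the span of $\nabla P_1(x_0),\ldots,\nabla P_k(x_0)$; equivalently $x_0$ is a solution of the system $P_1=\cdots=P_k=0$ together with the vanishing of all $(k+1)\times(k+1)$ minors of the matrix with rows $\nabla\ell,\nabla P_1,\ldots,\nabla P_k$ — a polynomial system. So every connected component of $Z$ contributes at least one distinct point to the (finite, for generic $\ell$) set of critical points of $\ell|_Z$. Step 3: Bound the number of critical points. For a generic choice of $\ell$ (using Lemma \ref{dense-lemma} with $M=Z$, which gives that the restricted gradient is generically nonvanishing, i.e. critical points are nondegenerate and isolated), one complexifies: the critical point equations cut out a zero-dimensional subscheme of $\mathbb C^n$, and by the classical Bézout theorem its cardinality is at most the product of the degrees of the defining equations. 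The cleanest way to land exactly on $\prod_{i=1}^k \deg(P_i)$ is to use the Lagrange-type formulation but track degrees carefully, or to argue component-by-component by induction on $k$: slice $Z(P_1,\ldots,P_{k-1})$, whose components number at most $\prod_{i<k}\deg P_i$ by induction, by the hypersurface $Z(P_k)$, and bound how the component count can grow using that on each existing component $P_k$ restricted has at most $\deg(P_k)$ sign changes / connected pieces of its complement. The transversality hypothesis is exactly what makes each slicing step behave.

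The main obstacle I expect is honestly the bookkeeping in Step 3: getting the clean product bound $\prod_{i=1}^k\deg(P_i)$ rather than something like $d\prod\deg(P_i)$ or a constant times it. The naive Morse-theory count via a linear functional introduces an extra factor from $\deg(\nabla\ell)=0$ interacting with the minors, and the honest route is the inductive slicing argument where one must carefully justify, using non-singularity and transversality, that intersecting with each new hypersurface $Z(P_k)$ multiplies the component count by at most $\deg(P_k)$ — this is where one genuinely needs a real-algebraic input (the fact that a smooth connected variety is cut by a degree-$e$ polynomial into at most $e$ pieces along the variety, a consequence of the intermediate value theorem plus a generic-line degree count). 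I would present the induction on $k$ as the backbone, invoking the density lemmas already established to arrange non-degeneracy at each stage, and cite the classical Bézout theorem over $\mathbb C$ only as the base mechanism controlling degrees.
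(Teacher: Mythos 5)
Your approach is genuinely different from the paper's, and it runs into exactly the obstacle you flag at the end --- which is fatal rather than a matter of bookkeeping. The paper's proof is a short complexification argument: embed ${\rm Poly}_d(\ZR^n)$ into ${\rm Poly}_d(\ZC^n)$, bound the number of \emph{irreducible} components of the complex variety $Z(P_1,\ldots,P_k)\subset\ZC^n$ by $\prod_i\deg(P_i)$ via iterated B\'ezout (Theorem 7.7 of \cite{Hartshorne}), and then pass from irreducible components over $\ZC$ to connected components over $\ZR$. Your Morse-theoretic route is essentially the Milnor--Thom argument, which is precisely what the paper states it is trying to avoid (it is the route Wang used for Proposition \ref{final-partition}). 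The reason it cannot give the clean product bound: the critical locus of a generic $\ell$ on $Z$ is cut out by $P_1=\cdots=P_k=0$ together with the $(k+1)\times(k+1)$ minors, each of degree roughly $\sum_i(\deg P_i-1)+\deg(\nabla\ell)$, and one needs $n-k$ of them to reach dimension zero; B\'ezout over $\ZC$ then yields a bound of the shape $\prod_i\deg(P_i)\cdot\bigl(\sum_i\deg P_i\bigr)^{n-k}$, i.e.\ the Oleinik--Petrovsky/Milnor--Thom bound, which is exponentially worse in $n$ than $\prod_i\deg(P_i)$ whenever $k<n$. This is not a constant that can be cleaned up afterwards.

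Your fallback, the inductive slicing step, is also unjustified as stated: it is not true that a degree-$e$ hypersurface cuts a smooth connected real variety into at most $e$ pieces (a degree-$e$ polynomial restricted to an irreducible real curve of degree $D$ can change sign up to $eD$ times, and on higher-dimensional components the count is worse). Indeed Harnack's theorem shows a smooth real plane curve of degree $e$ can have as many as $\binom{e-1}{2}+1$ connected components, so the bound you are trying to prove for $k=1$, $n=2$ cannot be reached by any purely real critical-point or sign-change count; the product bound is only accessible by counting irreducible components of the complexification, as the paper does. (Your instinct that there is a genuine obstacle here is sound: even in the paper's argument the final passage from irreducible components over $\ZC$ to connected components over $\ZR$ is the delicate point, since real irreducible varieties need not be connected.) The missing idea in your proposal is thus the complexification plus Hartshorne's refined B\'ezout, in place of the real Morse-theoretic count.
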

\begin{proof}
We embed ${\rm Poly}_d(\ZR^n)$ into ${\rm Poly}_d(\ZC^n)$. Let $Z_1,\ldots,Z_l$ be the irreducible components of $Z(P_1,\ldots,P_k)$, under the algebraic closed field $\ZC$. Then, by repeatedly using Theorem 7.7 in \cite{Hartshorne}, we have
\begin{equation}
\label{big-Bezout}
    l\leq \prod_{i=1}^k\deg(P_k).
\end{equation}
Since a irreducible component of $Z(P_1,\ldots,P_k)$ under $\ZC$ is always a irreducible component of $Z(P_1,\ldots,P_k)$ under $\ZR$, and since under $\ZR$, a irreducible component of $Z(P_1,\ldots,P_k)$ is always a connected component of $Z(P_1,\ldots,P_k)$, we can deduce \eqref{generalized-bezout} from \eqref{big-Bezout}.
\end{proof}

Now we can begin our proof of Proposition \ref{final-partition}. The idea is to use Theorem \ref{polynomial-sandwich}, Lemma \ref{dense-lemma} and Lemma \ref{dense-lemma-2} inductively to construct a desired polynomial. In the first step, using Theorem \ref{polynomial-sandwich}, we can find a polynomial $Q_1$ such that
\begin{equation}
    \int_{\{Q_1>0\}}g=\int_{\{Q_1<0\}}g=2^{-1}\int g.
\end{equation}
If we take $M=\ZR^n$ in Lemma \ref{dense-lemma}, we know that non-singular polynomials are dense in ${\rm Poly}_d(\ZR^n)$. Thus, by dominate convergence theorem, we can find a non-singular polynomial $P_1$ such that
\begin{equation}
    (1-d^{-10n})\int_{\{P_1>0\}}g\leq\int_{\{P_1<0\}}g\leq(1+d^{-10n})\int_{\{P_1>0\}}g.
\end{equation}

Next, we let $g_1=\Id_{\{P_1>0\}}g$ and $g_2=\Id_{\{P_1<0\}}g$, and apply Theorem \ref{polynomial-sandwich} again so that we can find a polynomial $Q_2$ satisfying 
\begin{equation}
    \int_{\{Q_2>0\}}g_i=\int_{\{Q_2<0\}}g_i=2^{-1}\int g_i
\end{equation}
for $i=1,2$. We take $M=Z(P_1)$ in Lemma \ref{dense-lemma}, combining the fact that non-singular polynomials are dense in ${\rm Poly}_d(\ZR^n)$ and the dominate convergence theorem to conclude the following: There is a non-singular polynomial $P_2$ such that $Z(P_1,P_2)$ is a transverse complete intersection, and for $i=1,2$,
\begin{equation}
    (1-d^{-10n})\int_{\{P_2>0\}}g_i\leq\int_{\{P_2<0\}}g_i\leq(1+d^{-10n})\int_{\{P_2>0\}}g_i.
\end{equation}

Generally, suppose that we are in the inductive step $k$. Since a countable union of sets of measure zero has measure zero, we can similarly use Theorem \ref{polynomial-sandwich}, Lemma \ref{dense-lemma} and Lemma \ref{dense-lemma-2} to find a non-singular polynomial $P_k$ so that
\begin{enumerate}
    \item Up to a factor $(1-d^{-10})$, $Z(P_k)$ bipartite the $L^1$ norm of $g$ in each of the precedent $2^{k-1}$ cells \begin{equation}
    \nonumber
        \bigcap_{j=1}^{k-1}\{(-1)^\si P_j>0\},\hspace{1cm}\si\in\{0,1\}.
    \end{equation}
    \item $Z(P_{i_1},\ldots,P_{i_l})$ is a transverse complete intersection for any sequence of polynomials $P_{i_1},\ldots,P_{i_l}\in \{P_1,\ldots,P_k\}$.
\end{enumerate}
We point out that Lemma \ref{dense-lemma-2} is used in (2) when $l\geq n$.

Theorem \ref{polynomial-sandwich} tells us the degree of $P_k$ is $O(2^{k/n})$, which implies the degrees of the product $P_1\cdots P_k$ is $O(2^{k/n})$. We choose a natural number $k$ obeying $d/2<2^{k/n}\leq d$, and let $P_0=P_1\cdots P_k$. As a result, $\deg(P_0)\sim d$, $Z(P_0)$ partition $\ZR^n$ into $\sim d^n$ many cells $U$, and up to a factor 2, the $L^1$ norm of $g$ in each cell $U$ is the same. We denote by $\cu$ the collection of these cells $U$. Since the $L^1$ function $g$ is supported in $B^n(0,R)$, any cell $U\in\cu$ must intersects the ball $B^n(0,R)$.

Finally, we want to use some appropriate hyperplanes to cut the ball $B^n(0,2R)$ into $\sim d^n$ many cells of diameter $\sim R/d$. Specifically, for each variable $x_i$, $i=1,\ldots,n$, we choose $4d$ polynomials $Q_{ij}(x)=x_i-c_{ij}$, $j=1,\ldots,4d$, such that $Rd^{-1}/2\leq|c_{ij}-c_{ij'}|\leq Rd^{-1}$ if $|j-j'|=1$, and $Z(P_{l_1},\ldots,P_{l_h})$ is a transverse complete intersection for any $P_{l_1},\ldots,P_{l_h}\in \{P_1,\ldots,P_k, \{Q_{ij}\}_{i,j}\}$. The existence of $Q_{ij}$ is guaranteed by Lemma \ref{dense-lemma}, with $V$ taken to be the set $\{x_i+c:c\in\ZR\}$. Note that $\{Z(Q_{ij})\}_{i,j}$ cuts $B^n(0,2R)$ into $\sim d^n$ many cells with diameter $\leq Rd^{-1}$. The polynomial $P=P_0\cdot(\prod_{i,j}Q_{ij})$ is what we are looking for in Proposition \ref{final-partition}.

\vspace{3mm}

We claim that the number of connected components of $\ZR^n\setminus Z(P)$ is $O(d^n)$. Supposing at first the claim is justified. Let $\co$ be the collection of connected components of $\ZR^n\setminus Z(P)$ contained in $B^n(0,3R/2)$. Then by the construction of the polynomial $P$ above, each cell $O\in\co$ is contained in a cube in $B^n(0,2R)$ of diameter $\sim Rd^{-1}$. Notice that any set $U\in\cu$ contains at least one cell in $\co$. Since $|\cu|\sim d^n$, by pigeonholing, there exists a subcollection $\cu'\subset \cu$ with $|\cu'|\sim d^n$ such that any $U\in \cu'$, $U$ contains $O(1)$ many cells in $\co$. Since $\int_U g\sim d^{-n}\int g$, by pigeonholing again, for each $U\in \cu'$, we can find a cell $O\in\co$, such that $O\subset U$ and $\int_{O}g\sim d^{-n}\int g$. This proves Proposition \ref{final-partition}.

It remains to prove $|\co|=O(d^n)$. The idea is to calculate the number of appropriate $i$-th dimensional boundaries of the sets in $\co$. For convenience, we rewrite the sequence of polynomials $\{P_1,\ldots,P_k, \{Q_{ij}\}_{i,j}\}$ as $\{P_1,\ldots,P_k, P_{k+1},\ldots, P_m\}$, so that $\sum_{l\leq m}\deg(P_l)=\deg(P)\sim d$. For $i=0,1,\ldots,n-1$, we let $\Pi_i$ be the collection of $i$-th dimensional manifolds defined as 
\begin{equation}
    \Pi_i=\{Z(P_{l_1},\ldots,P_{l_{n-i}})\not=\varnothing:P_{l_1},\ldots,P_{l_{n-i}}\in\{P_1,\ldots, P_m\}\}.
\end{equation}
Observe that an element in $\Pi_i$ is a union of $i$-th dimensional connected components, each of which may serve as $i$-th dimensional boundaries for an $O\in\co$.

Next, we partition $\co$ into $n$ disjoint subcollections $\co_i$, $i=0,1,\ldots,n-1$, such that for any $O\in \co_i$, the minimal dimensions of the boundaries of $O$ is $i$. Notice that if $O\in\co_i$ has an $i$-th dimensional boundary which is also connected, then this boundary is a subset of an element in $\Pi_i$. Since $Z(P_{l_1},\ldots,P_{l_{h}})$ is a transverse complete intersection, it follows that for two multi-indices $(l_1,\ldots, l_h)\not=(l_1'\ldots,l_h')$, $Z(P_{l_1},\ldots,P_{l_{h}})\not=Z(P_{l_1'},\ldots,P_{l_{h}'})$. Thus, for each connected component of an $i$-th dimensional manifold $Z(P_{l_1},\ldots,P_{l_{n-i}})$, there are at most $2^{n-i}$ many cells $O\in\co_i$ that use this connected component as their boundary. This is because there is a one-to-one map between these cells $O$, and the sets\footnote{See also Lemma 1 in \cite{Warren}.}
\begin{equation}
    \bigcap_{j=1}^{n-i}\{(-1)^\si P_{l_j}>0\}, \hspace{1cm}\si\in\{0,1\}.
\end{equation}

As a result, recalling $\#(Z(P_{l_1},\ldots,P_{l_{n-i}}))$ is the number of connected components of $Z(P_{l_1},\ldots,P_{l_{n-i}})$, the cardinality of $\co$ is bounded above by
\begin{equation}
    \sum_{i=0}^{n-1}2^{n-i}\sum_{Z(P_{l_1},\ldots,P_{l_{n-i}})\in\Pi_i}\#(Z(P_{l_1},\ldots,P_{l_{n-i}})).
\end{equation}
By \eqref{generalized-bezout}, $\#(Z(P_{l_1},\ldots,P_{l_{n-i}}))\leq\deg(P_{l_1})\cdots\deg(P_{l_{n-i}})$. It follows that
\begin{equation}
    |\co|\leq \sum_{i=0}^{n-1}2^{n-i}\sum_{Z(P_{l_1},\ldots,P_{l_{n-i}})\in\Pi_i}\deg(P_{l_1})\cdots\deg(P_{l_{n-i}}),
\end{equation}
which is bounded by
\begin{equation}
    n2^n\max_i\sum_{P_{l_1},\ldots,P_{l_{n-i}}\in\{P_1,\ldots,P_m\}}\deg(P_{l_1})\cdots\deg(P_{l_{n-i}}).
\end{equation}
We further bound the above quantity by
\begin{equation}
    n2^n\Big(\sum_{l=1}^m\deg(P_l)\Big)^n\lesssim n2^nd^n.
\end{equation}
This shows that $|\co|=O(d^n)$. \qed

\section{Iterated polynomial partitioning algorithm}
We will prove Theorem \ref{broad-theorem} in the next three sections. By homogeneity, we assume $\|f\|_2=1$. We also assume $R>N^{N^N}$ with $N=E_\e^{E_\e}$ for convenience. Consequently, to prove Theorem \ref{broad-theorem}, it suffices to consider those points $x\in B_R$ such that ${\rm Br}_\al Sf(x)\geq R^{-10}$, and assume $\|{\rm Br}_\al Sf\|_{L^p(B_R)}\geq R^{-10}$.
\vspace{3mm}

\subsection{One-step polynomial partitioning}\hfill

We apply Proposition \ref{final-partition} to the $L^1$ function $|{\rm Br}_\al Sf|^p$, so that we have a polynomial $P$ of degree $O(d)$ and a collection of cells $\co$ with $|\co|\sim d^3$, satisfying the conditions that
\begin{equation}
    \int_{O}|{\rm Br}_\al Sf(x)|^p\sim d^{-3}\int_{B_R}|{\rm Br}_\al Sf(x)|^p
\end{equation}
for any cell $O\in\co$, and $O$ is contained in an $Rd^{-1}$ cube in $B_{2R}$. Let $W$ be the $R^{1/2+\be}$ neighborhood of $Z(P)$ contained in $B_{2R}$, and let $O'=O\setminus W$, so
\begin{equation}
    \int_{B_R}|{\rm Br}_\al Sf|^p\lesssim\int_W|{\rm Br}_\al Sf|^p+\sum_{O\in\co}\int_{O'}|{\rm Br}_\al Sf|^p.
\end{equation}

We do wave packet decomposition for the function $Sf$, and use $f^O$ to denote the sum of wave packets $f_{T}$ with $O'\cap R^\be T\not=\varnothing$. That is, if we let $\ZT_{O}$ be the collection of tubes such that $O'\cap R^\be T\not=\varnothing$, then $f^O=\sum_{T\in\ZT_{O}} f_T$. As proved in \cite{Guth-restriction-R3}, we have
\begin{lemma}
\label{1-2-broadness-cell}
For any point $x\in O'$, we have
\begin{equation}
    {\rm Br}_\al Sf(x)\leq {\rm Br}_{2\al} f^{O}(x).
\end{equation}
\end{lemma}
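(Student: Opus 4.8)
The plan is to exploit the defining inequalities of broadness together with the fact that, away from the variety $W=N_{R^{1/2+\be}}(Z(P))$, only wave packets in $\ZT_O$ can contribute nontrivially at $x$. More precisely, fix $x\in O'$ and write $Sf(x)=\sum_{T\in\ZT}f_T(x)$. I would first split this sum as $Sf(x)=f^O(x)+\sum_{T\notin\ZT_O}f_T(x)$, where $f^O=\sum_{T\in\ZT_O}f_T$ and $\ZT_O=\{T:O'\cap R^\be T\neq\varnothing\}$. The key point is that if $T\notin\ZT_O$ then $x\in O'$ lies outside $R^\be T$, so $w_{T,E_\e}(x)\leq R^{-\e E_\e^{1/2}}$ (say), and since $|f_T(x)|\lesssim_\e\|f_T\|_\infty\, w_{T,E_\e}(x)$ and there are only polynomially many tubes, the total contribution $\sum_{T\notin\ZT_O}|f_T(x)|$ is bounded by something like $R^{-100}$, which is far below the threshold $R^{-10}$ we are allowed to ignore (recall we only consider $x$ with ${\rm Br}_\al Sf(x)\geq R^{-10}$). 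Hence, up to this negligible error, $Sf(x)=f^O(x)$ and in particular $|Sf(x)|\leq 2|f^O(x)|$, say.

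Next I would compare the broad pieces. Recall $x$ is $\al$-broad for $Sf$ means $\sup_{\tau\in\cT}|(Sf)_{2\tau}(x)|\leq\al|Sf(x)|$, where $(Sf)_{2\tau}$ collects the wave packets whose frequency support lies in $2\tau$. Since the wave packet decomposition is organized so that each $\theta$ (and hence each $T\in\ZT_\theta$) lies in a unique $\tau\in\cT$ with $\theta\subset 2\tau$, we have $(f^O)_{2\tau}=\sum_{T\in\ZT_O,\ T_\theta\subset 2\tau}f_T$, i.e. $(f^O)_{2\tau}$ is obtained from $(Sf)_{2\tau}$ by dropping the tubes not in $\ZT_O$. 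By the same Schwartz-tail estimate as above, $|(f^O)_{2\tau}(x)-(Sf)_{2\tau}(x)|\leq R^{-100}$ for every $\tau$. Therefore
\begin{equation}
\sup_{\tau\in\cT}|(f^O)_{2\tau}(x)|\leq\sup_{\tau\in\cT}|(Sf)_{2\tau}(x)|+R^{-100}\leq\al|Sf(x)|+R^{-100}\leq\al\cdot 2|f^O(x)|+R^{-100}.
\end{equation}
Since we are at a point where $|Sf(x)|\geq R^{-10}$, hence $|f^O(x)|\gtrsim R^{-10}$, the error $R^{-100}$ is $\leq\al|f^O(x)|$ (as $\al=K^{-\e}\geq R^{-1}$, say), so $\sup_\tau|(f^O)_{2\tau}(x)|\leq 3\al|f^O(x)|\leq 2\cdot(2\al)|f^O(x)|$ — one tightens the constants to land exactly at $2\al$, which is a matter of bookkeeping with the absolute constants in the Schwartz-tail bounds and can be absorbed by taking $R$ large. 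This shows $x$ is $2\al$-broad for $f^O$, and since $|Sf(x)|\leq |f^O(x)|(1+o(1))$, we get ${\rm Br}_\al Sf(x)=|Sf(x)|\leq |f^O(x)|={\rm Br}_{2\al}f^O(x)$ after adjusting constants.

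The main obstacle, such as it is, is purely quantitative: making sure the Schwartz-tail errors coming from tubes $T\notin\ZT_O$ are genuinely negligible compared both to $|Sf(x)|$ (to replace $Sf$ by $f^O$) and to $\al|Sf(x)|$ (to keep the broadness inequality intact with the slightly worse constant $2\al$). This is exactly where the hypotheses $R>N^{N^N}$, the restriction to $x$ with ${\rm Br}_\al Sf(x)\geq R^{-10}$, the $R^\be$-dilation in the definition of $\ZT_O$, and the rapid decay $|\Id_T^\ast|\leq C_\e w_{T,E_\e}$ are used; with $E_\e$ astronomically large this is routine. There is also the minor geometric point that $O'=O\setminus W$ and $W$ is an $R^{1/2+\be}$-neighborhood of $Z(P)$ — this plays no role in the present lemma but guarantees that $O'$ is well inside the cube of diameter $\sim Rd^{-1}$ containing $O$, which is what lets the tube-counting be polynomial. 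I would write the argument exactly as in Guth \cite{Guth-restriction-R3}, citing it for the combinatorial heart and only spelling out the constant-chasing needed to pass from $\al$ to $2\al$.
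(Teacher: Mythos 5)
Your argument is precisely the one the paper intends: it states this lemma without proof, citing \cite{Guth-restriction-R3}, and your reconstruction --- negligibility of the wave packets $f_T$ with $T\notin\ZT_O$ at points of $O'$ via the rapid decay of $\Id_T^\ast$ outside $R^\be T$, applied both to $Sf$ itself and to each piece $(Sf)_{2\tau}$, together with the standing restriction to points where ${\rm Br}_\al Sf(x)\geq R^{-10}$ so that the tails can be absorbed into the relaxation from $\al$ to $2\al$ --- is exactly Guth's proof. The only caveat is the one you already flag yourself: strictly speaking the conclusion holds with an extra factor $1+O(R^{-90})$ coming from the Schwartz tails, which the paper (following Guth's conventions) silently absorbs, so your constant-chasing remark is the honest version of what is being swept under the rug.
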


\vspace{3mm}

Following the idea in \cite{Guth-restriction-R3}, we let $\ZB=\{B_k\}$ be a collection of finitely overlapping $R^{1-\de}$ cubes in $B_{2R}$. For each $B_k$, we define two subcollections of tubes, $\ZT_{k,tang}$ and $\ZT_{k,trans}$ by
\begin{definition}
$\ZT_{k,tang}$ is the set of tubes $T\in\ZT$ obeying the following two conditions:
\begin{enumerate}
    \item [$\bullet$] $R^\be T\cap W\cap B_k\not=\varnothing$.
    \item [$\bullet$] If $z$ is any non-singular point of $Z(P)$ lying in $10 B_k\cap 10R^\be T$, then 
    \begin{equation}
        |{\rm Angle}(v(T),T_zZ(P)|\leq R^{-1/2+\de}.
    \end{equation}
\end{enumerate}
\end{definition}
\begin{definition}
$\ZT_{k,trans}$ is the set of tubes $T\in\ZT$ obeying the following two conditions:
\begin{enumerate}
    \item [$\bullet$] $R^\be T\cap W\cap B_k\not=\varnothing$.
    \item [$\bullet$] There exists a non-singular point $z$ of $Z(P)$ in $10 B_k\cap 10R^\be T$, so that 
    \begin{equation}
    \label{transverse-condition}
        |{\rm Angle}(v(T),T_zZ(P)|>R^{-1/2+\de}.
    \end{equation}
\end{enumerate}
\end{definition}
We let $f_{k,trans}$ be the sum of wave packets $f_{T}$ that $T\in\ZT_{k,trans}$, and let $f_{k,tang}$ be the sum of wave packets $f_T$ that $T\in\ZT_{k,tang}$. We also let $f_{\tau, k, trans}$ be the sum of wave packets $f_{\theta,T}$ such that $T\in\ZT_{k,trans}$ and $\theta\subset2\tau$. For $J$ being a subset of $\cT$, we define $f_{J, k, trans}$ to be the sum of the functions $f_{\tau, k, trans}$ that $\tau\in J$. We can similarly define $f_{\tau,k,tang}, f_{J,k,tang}$.

\vspace{3mm}

Next, we consider the following bilinear operator
\begin{equation}
    {\rm Bil}( f_{k,tang})(x)=\sum_{\tau_1\not=\tau_2,\tau_i\in\cT}|f_{\tau_1,k,tang}(x)|^{1/2}| f_{\tau_2,k,tang}(x)|^{1/2}.
\end{equation}
The following lemma is essentially proved in \cite{Guth-restriction-R3}.
\begin{lemma}
\label{1-2-broadness-wall}
Assume $\al<10^{-5}$. Let $\cT_\tau$ be the subset of $\cT$ that $\cT_\tau\sqcup\tau=\cT$. Then for any $x\in B_k\cap W$, 
\begin{equation}
\label{trans-tangent-broadness}
    {\rm Br}_\al Sf(x)\leq 2\big(\sum_{\tau\in\cT}{\rm Br}_{2\al} f_{\cT_\tau,k,trans}(x)+K^{2}{\rm Bil}(f_{k,tang})(x)\big).
\end{equation}
\end{lemma}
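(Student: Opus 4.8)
\textbf{Proof proposal for Lemma \ref{1-2-broadness-wall}.}

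The plan is to run a two-step broad--narrow dichotomy relative to the wall $W$, exactly parallel to the cell case (Lemma \ref{1-2-broadness-cell}) but now tracking the $\tau$-decomposition. Fix a point $x\in B_k\cap W$. First I would discard the transverse/tangential split for a moment and recall that by definition of $\al$-broadness, $\sup_{\tau\in\cT}|(Sf)_{2\tau}(x)|\leq \al|Sf(x)|$, so that for \emph{every} $\tau$ we have $|Sf(x)|\leq |(Sf)_{2\tau}(x)|+|Sf(x)-(Sf)_{2\tau}(x)| $; the second term is $|f_{\cT_\tau}(x)|$ where $f_{\cT_\tau}=\sum_{\tau'\in\cT_\tau}f_{\tau'}$. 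The first step is therefore to observe that $|Sf(x)|\leq \al|Sf(x)|+\min_\tau|f_{\cT_\tau}(x)|$, hence (for $\al<1/2$) $|Sf(x)|\leq 2\min_{\tau}|f_{\cT_\tau}(x)|$, which immediately upgrades to $|Sf(x)|\leq 2\,\mathrm{Br}_{2\al}f_{\cT_\tau}(x)$ on the $\al$-broad set provided the broadness of $Sf$ transfers to broadness of each $f_{\cT_\tau}$; this transfer is the same computation as in \cite{Guth-restriction-R3}, using that removing one $\tau$ can only shrink the sup over $\tau'$-pieces while the value $|f_{\cT_\tau}(x)|$ stays $\geq \frac12|Sf(x)|$, at the cost of replacing $\al$ by $2\al$.

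The second step is to split each $f_{\cT_\tau}$ into its transverse and tangential parts at $B_k$: writing $f_{\cT_\tau}=f_{\cT_\tau,k,trans}+f_{\cT_\tau,k,tang}+(\text{wave packets with }R^\be T\cap W\cap B_k=\varnothing)$, the last group does not contribute at $x\in B_k\cap W$ by the rapid decay of the wave packet weights $w_{T,E_\e}$ (this is where the $R^\be$-dilation and $R^{-10}$ threshold on $\mathrm{Br}_\al Sf$ are used to absorb the tails). So up to negligible error $|f_{\cT_\tau}(x)|\leq |f_{\cT_\tau,k,trans}(x)|+|f_{\cT_\tau,k,tang}(x)|$, and by averaging over the choice of $\tau$ I get $|Sf(x)|\leq 2\big(\sum_\tau \mathrm{Br}_{2\al}f_{\cT_\tau,k,trans}(x)+\min_\tau|f_{\cT_\tau,k,tang}(x)|\big)$ — the sum over $\tau$ is wasteful but harmless. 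It remains to show $\min_\tau|f_{\cT_\tau,k,tang}(x)|\lesssim K^2\,\mathrm{Bil}(f_{k,tang})(x)$.

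The third and main step is the tangential term. Set $a_\tau=|f_{\tau,k,tang}(x)|$ and $a=\sum_\tau a_\tau$; there are $\lesssim K^2$ values of $\tau\in\cT$ (each $\tau$ is a $K^{-1}\times K^{-1}\times K^{-2}$ tube in the unit-scale strip, so $|\cT|\lesssim K^2$). If two of the $a_\tau$ are each $\gtrsim a/K^2$ for \emph{distinct} $\tau_1\neq\tau_2$, then $\mathrm{Bil}(f_{k,tang})(x)\geq a_{\tau_1}^{1/2}a_{\tau_2}^{1/2}\gtrsim a/K^2$, and since $|f_{\cT_\tau,k,tang}(x)|\leq \sum_{\tau'}a_{\tau'}=a$ we are done. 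Otherwise the mass concentrates on a single $\tau_0$: $a_{\tau_0}\geq a - K^2\cdot(a/K^2)\geq$ a definite fraction of $a$ only fails if there is exactly one large term, i.e. $\sum_{\tau\neq\tau_0}a_\tau\leq K^2\cdot(a/K^2)$ — here one must be a little careful; the clean way is: either there exist $\tau_1\neq\tau_2$ with $a_{\tau_i}\geq a/(2K^2)$, giving the bilinear bound, or all but one $\tau_0$ satisfy $a_\tau< a/(2K^2)$, whence $\sum_{\tau\neq\tau_0}a_\tau< a/2$, so $f_{\cT_{\tau_0},k,tang}(x)$ has $|f_{\cT_{\tau_0},k,tang}(x)|<a/2$, and choosing $\tau=\tau_0$ in the $\min$ makes the tangential term $\leq a/2$ which can then be absorbed into the left side $|Sf(x)|$ after re-running the dichotomy (or folded into the transverse sum). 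Iterating/absorbing this at most $O(\log K)$ times, and tidying the constants, yields \eqref{trans-tangent-broadness}.

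\textbf{Main obstacle.} The genuinely delicate point is not the pigeonholing but the geometric \emph{definition} of $\ZT_{k,tang}$ and $\ZT_{k,trans}$: these are defined via the angle condition at non-singular points $z\in 10B_k\cap 10R^\be T$ of $Z(P)$, and a single tube $T$ could in principle meet $10B_k$ in a way that is tangential near one point of $Z(P)$ and transverse near another. I would handle this exactly as in \cite{Guth-restriction-R3}: $\ZT_{k,trans}$ is defined by the \emph{existence} of one transverse non-singular point, so $\ZT=\ZT_{k,tang}\sqcup\ZT_{k,trans}$ genuinely partitions the relevant tubes, and the point is that on $B_k\cap W$ the value $|f_{\cT_\tau}(x)|$ is, up to the rapidly decaying tails already discarded, controlled by the tubes with $R^\be T\cap W\cap B_k\neq\varnothing$, which is precisely $\ZT_{k,tang}\cup\ZT_{k,trans}$ restricted to $\cT_\tau$. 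Verifying that the tails really are negligible — i.e. that wave packets $f_T$ with $R^\be T$ missing $W\cap B_k$ contribute $\ll R^{-10}$ at $x$, so that they cannot affect the $\al$- versus $2\al$-broadness bookkeeping — is the one computation I would actually need to write out carefully, and it is where the choice $\be=\e^{1000}$, $N=E_\e$ and the standing assumption $\mathrm{Br}_\al Sf(x)\geq R^{-10}$ all get used.
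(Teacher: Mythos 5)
The paper offers no proof of this lemma at all -- it simply states that it ``is essentially proved in \cite{Guth-restriction-R3}'' (it is the analogue of Guth's Lemma 3.8, adapted to the operator $Sf$ and to the variant in which one cap $\tau$ is deleted from the transverse part). So your proposal must stand on its own, and as written it has two genuine gaps, both at the point where the actual content of the lemma lives.

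First, in your second step you pass from $|f_{\cT_\tau,k,trans}(x)|$ to ${\rm Br}_{2\al}f_{\cT_\tau,k,trans}(x)$ with no justification. Since ${\rm Br}_{2\al}g(x)$ equals either $|g(x)|$ or $0$, this substitution goes in the wrong direction unless you \emph{verify} that $x$ is $2\al$-broad for $f_{\cT_\tau,k,trans}$, and that verification is precisely the heart of the lemma. It cannot hold for all $\tau$ simultaneously with the tangential bound: when the tangential wave packets carry most of $f(x)$, the functions $f_{\cT_\tau,k,trans}$ may be small at $x$ and their broad parts may all vanish. The correct structure is a pointwise dichotomy, not a conjunction: at each $x\in B_k\cap W$, \emph{either} two caps $\tau_1\neq\tau_2$ satisfy $|f_{\tau_i,k,tang}(x)|\geq \tfrac12 K^{-2}|Sf(x)|$, in which case $|Sf(x)|\leq 2K^2\,{\rm Bil}(f_{k,tang})(x)$ and one is done without touching the transverse terms; \emph{or} at most one exceptional $\tau_0$ does, in which case $\sum_{\tau\neq\tau_0}|f_{\tau,k,tang}(x)|\leq \tfrac12|Sf(x)|$, whence $|f_{\cT_{\tau_0},k,trans}(x)|\geq(1-\al-\tfrac12-o(1))|Sf(x)|$, and one then checks $2\al$-broadness of $f_{\cT_{\tau_0},k,trans}$ by writing $(f_{\cT_{\tau_0},k,trans})_{2\tau'}(x)=f_{2\tau'}(x)-(\text{tangential and }\tau_0\text{ and far wave packets})_{2\tau'}(x)$ and bounding each subtracted piece; the nonnegative sum $\sum_{\tau}{\rm Br}_{2\al}f_{\cT_\tau,k,trans}(x)$ appears only because one does not know which $\tau_0$ is exceptional.

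Second, your third step sets the dichotomy threshold relative to $a=\sum_\tau|f_{\tau,k,tang}(x)|$ rather than relative to $|Sf(x)|$. In your second branch you obtain $|f_{\cT_{\tau_0},k,tang}(x)|<a/2$ and propose to ``absorb'' this into $|Sf(x)|$; but $a$ has no a priori relation to $|Sf(x)|$ (the tangential pieces can be individually large with cancellation), so $a/2$ is not absorbable, and ``re-running the dichotomy $O(\log K)$ times'' does not correspond to any well-defined iteration -- the case analysis is a one-shot pointwise statement. Setting the threshold at $\tfrac12 K^{-2}|Sf(x)|$ as above removes both problems: the bilinear branch then controls $|Sf(x)|$ directly, and the complementary branch controls the total tangential contribution outside $\tau_0$ by $\tfrac12|Sf(x)|$, which is exactly what the lower bound on $|f_{\cT_{\tau_0},k,trans}(x)|$ and its broadness require. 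Your ``main obstacle'' paragraph about negligible tails is correct but is not where the difficulty lies.
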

It follows that if we define $\cT(B_k)\subset\cT$ to be the set satisfying
\begin{equation}
    \|\Id_{B_k\cap W}{\rm Br}_{2\al} f_{\cT(B_k),trans}\|_p=\max_{\tau\in\cT}\|{\Id_{B_k\cap W}\rm Br}_{2\al} f_{\cT_\tau,k,trans}\|_p,
\end{equation}
then we have
\begin{equation}
\nonumber
    \int_{B_k\cap W} {\rm Br}_\al Sf^p\leq CK^2\int_{B_k\cap W}|{\rm Br}_{2\al} f_{\cT(B_k),trans}|^p+CK^2\int_{B_k\cap W} {\rm Bil}(f_{k,tang})^p.
\end{equation}

When considering all $B_k\in\ZB$, the transverse part in the right hand side of the inequality above is not good enough for iteration. This drawback motivates us to use the dyadic pigeonholing trick to find an appropriate subset of $\ZB$, as shown in the next lemma.
\begin{lemma}
\label{choice-of-mu}
There is a dyadic value ${\mu_1}$ and a set $\ZB_{\mu_1}\subset\ZB$, such that for any $B_k\in\ZB_{\mu_1}$,
\begin{equation}
    \int_{B_k\cap W}|{\rm Br}_{2\al} f_{\cT(B_k),trans}|^p\sim \mu_1.
\end{equation}
Also, we have
\begin{equation}
\nonumber
    \sum_{B_k\in\ZB_{{\mu_1}}}\int_{B_k\cap W}|{\rm Br}_{2\al} f_{\cT(B_k),trans}|^p\gtrsim (\log R)^{-1}\sum_{B_k\in\ZB}\int_{B_k\cap W}|{\rm Br}_{2\al} f_{\cT(B_k),trans}|^p.
\end{equation}
\end{lemma}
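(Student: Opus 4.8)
The statement is a standard dyadic pigeonholing, and the plan is to sort the cubes $B_k\in\ZB$ according to the dyadic size of the integral
\[
a_k:=\int_{B_k\cap W}|{\rm Br}_{2\al} f_{\cT(B_k),trans}|^p,
\]
and then extract the dyadic class that captures a $(\log R)^{-1}$-fraction of the total $A:=\sum_{B_k\in\ZB}a_k$. If $A=0$ the lemma is vacuous, so assume $A>0$.

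First I would record the only external input needed: since $\ZB$ is a family of finitely overlapping $R^{1-\de}$-cubes contained in $B_{2R}$, we have the crude bound $|\ZB|\lesssim R^{3}$ (any polynomial bound suffices). The only subtlety is that some $a_k$ may be tiny or zero, so I would first discard the ``light'' cubes, namely those with $a_k< A/(2|\ZB|)$. Their total contribution is at most $|\ZB|\cdot A/(2|\ZB|)=A/2$, so the surviving ``heavy'' cubes still carry $\sum a_k> A/2$, and crucially each heavy cube satisfies the two-sided bound
\[
\frac{A}{2|\ZB|}\le a_k\le A .
\]

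Next, among the heavy cubes I would split into dyadic classes $\ZB[j]:=\{B_k\ \text{heavy}: 2^{j-1}<a_k\le 2^{j}\}$. By the two-sided bound above, only those $j$ lying in an interval of length $\le\log_2(2|\ZB|)\lesssim\log R$ give a nonempty class. Since the heavy classes together carry more than $A/2$, pigeonholing over these $O(\log R)$ values of $j$ produces some index $j_0$ with $\sum_{B_k\in\ZB[j_0]}a_k\gtrsim(\log R)^{-1}A$. Setting $\mu_1:=2^{j_0}$ and $\ZB_{\mu_1}:=\ZB[j_0]$ then yields both assertions: $a_k\sim\mu_1$ for every $B_k\in\ZB_{\mu_1}$, and $\sum_{B_k\in\ZB_{\mu_1}}a_k\gtrsim(\log R)^{-1}\sum_{B_k\in\ZB}a_k$.

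There is no real obstacle here; the only thing worth double-checking is that the relevant range of dyadic scales has size $O(\log R)$, which reduces to the elementary facts $|\ZB|\lesssim R^{O(1)}$ and (for the heavy cubes) $a_k\ge A/(2|\ZB|)$ — the latter being precisely the reason for including the preliminary discarding step rather than appealing to an a priori lower bound on $A$.
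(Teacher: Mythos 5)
Your proof is correct and follows essentially the same route as the paper: dyadic pigeonholing over $O(\log R)$ classes. The only (minor) difference is how the number of relevant dyadic scales is bounded --- the paper discards classes with $\mu\leq R^{-C}$ by appealing to the negligibility of such contributions (using the normalization $\|f\|_2=1$ and the standing assumption $\|{\rm Br}_\al Sf\|_{L^p(B_R)}\geq R^{-10}$), whereas your preliminary discarding of cubes with $a_k<A/(2|\ZB|)$ is self-contained and avoids any external input beyond $|\ZB|\lesssim R^{O(1)}$.
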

\begin{proof}
We sort 
\begin{equation}
    \int_{B_k\cap W}|{\rm Br}_{2\al} f_{\cT(B_k),trans}|^p
\end{equation}
according to its magnitude. For a dyadic number $\mu\lesssim1$, we let $\ZB_{\mu}$ be the collection of $B_k$ such that
\begin{equation}
    \int_{B_k\cap W}|{\rm Br}_{2\al} f_{\cT(B_k),trans}|^p\sim \mu.
\end{equation}
Notice that we only need to consider those $\mu\geq R^{-C}$. Otherwise, the contributions from the transverse part is negligible. Consequently, there are $O(\log R)$ many possible $\mu$. By pigeonholing, we can choose a dyadic value $\mu_1$ and the corresponding set $\ZB_{\mu_1}$ such that
\begin{equation}
\nonumber
    \sum_{B_k\in\ZB_{\mu_1}}\int_{B_k\cap W}|{\rm Br}_{2\al} f_{\cT(B_k),trans}|^p\gtrsim (\log R)^{-1}\sum_{B_k\in\ZB}\int_{B_k\cap W}|{\rm Br}_{2\al} f_{\cT(B_k),trans}|^p.
\end{equation}
\end{proof}

We also have the following two lemmas essentially concerning the incidences between tubes and domains. Their proofs use the fundamental theorem of algebra, and the Lemma 4.1 in \cite{Guth-restriction-R3} with appropriate choices of $\rho$ and $a$.  
\begin{lemma}
\label{tranverse-lemma2}
Each tube $T$ belongs to $\ZT_O$ for $O(d)$ many $O\in\co$. Combining the almost $L^2$-orthogonality  \eqref{L2-orthogonality}, we have
\begin{equation}
    \sum_{O\in\co}\|f^O\|_2^2\lesssim d\|Sf\|_2^2.
\end{equation}
\end{lemma}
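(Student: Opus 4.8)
The plan is to show that a fixed tube $T$ can meet only $O(d)$ of the dilated cells $R^\be\cdot(\text{stuff})$, and then sum the $L^2$ almost-orthogonality over $\co$. First I would recall the geometry: each cell $O\in\co$ lies in a cube of side $\sim Rd^{-1}$, and the cells in a common cell of the grid $\{Z(Q_{ij})\}$ are separated, while each $O$ is also one of the $\sim d^3$ pieces into which $Z(P_0)$ cuts $B_{2R}$. A tube $T\in\ZT$ has dimensions $R^{1/2}\times R^{1/2}\times R$, so $R^\be T$ is a tube of length $\sim R^{1+\be}$ and radius $\sim R^{1/2+\be}$; since $O'=O\setminus W$ and $W$ is the $R^{1/2+\be}$-neighborhood of $Z(P)=Z(P_0\cdot\prod Q_{ij})$, having $O'\cap R^\be T\neq\varnothing$ forces $R^\be T$ to enter $O'$, i.e. to cross into the interior of the cell past its $R^{1/2+\be}$-collar. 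The key point is to bound how many such cells the line through $T$ (or the core segment of $T$) can visit.

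The main step is the incidence bound. The core of $R^\be T$ is (essentially) a line segment $\ell$ of length $\sim R$. As $\ell$ travels across $B_{2R}$, each time it passes from one cell $O$ to a neighboring one it must cross $Z(P)$; so the number of distinct cells $O$ for which $\ell$ genuinely enters $O'$ (not merely grazes the collar $W$) is at most one more than the number of transverse intersections of $\ell$ with $Z(P)$. By the fundamental theorem of algebra, a line meets $Z(P)$ in at most $\deg(P)\sim d$ points unless it lies inside $Z(P)$; the latter cannot happen for a tube contributing to $f^O$ for any $O$, since then $R^\be T$ would be contained in $W$ and $R^\be T\cap O'=\varnothing$ for every $O$. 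Thus $\ell$ crosses $Z(P)$ at most $O(d)$ times, so $\ell$ — and hence $R^\be T$ after accounting for its $R^{1/2+\be}$ thickness — enters at most $O(d)$ of the $O'$. This is exactly where one invokes Lemma 4.1 of \cite{Guth-restriction-R3} with the parameters $\rho\sim R^{1/2+\be}$ and an appropriate choice of $a$: it converts the thin-tube statement into the clean combinatorial bound $\#\{O\in\co: T\in\ZT_O\}=O(d)$, absorbing the thickening and the finitely-overlapping grid boundaries $\{Z(Q_{ij})\}$ into the constant.

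Given this, the $L^2$ conclusion is immediate. We have $f^O=\sum_{T\in\ZT_O}f_T$, and by the almost $L^2$-orthogonality \eqref{L2-orthogonality},
\begin{equation}
\nonumber
    \|f^O\|_2^2\lesssim\sum_{T\in\ZT_O}\|f_T\|_2^2.
\end{equation}
Summing over $O\in\co$ and interchanging the order of summation,
\begin{equation}
\nonumber
    \sum_{O\in\co}\|f^O\|_2^2\lesssim\sum_{O\in\co}\sum_{T\in\ZT_O}\|f_T\|_2^2=\sum_{T\in\ZT}\#\{O\in\co:T\in\ZT_O\}\,\|f_T\|_2^2\lesssim d\sum_{T\in\ZT}\|f_T\|_2^2\lesssim d\|Sf\|_2^2,
\end{equation}
where the last inequality is \eqref{L2-orthogonality} applied with $\ZT_\theta'=\ZT_\theta$. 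I expect the only real obstacle to be the careful bookkeeping in the incidence step — namely verifying that the $R^{1/2+\be}$-thickening of $T$ together with the collar $W$ does not inflate the count of visited cells beyond $O(d)$, and that a tube whose core lies in $Z(P)$ genuinely contributes to no $\ZT_O$; both are handled exactly as the corresponding estimates in \cite{Guth-restriction-R3}, so no new idea is needed here.
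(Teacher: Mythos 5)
Your argument is correct and is essentially the proof the paper intends: the paper gives no details, deferring to the fundamental theorem of algebra and Guth's incidence lemma, and your count (the core line of $T$ crosses $Z(P)$ at most $\deg P\sim d$ times, the $R^{1/2+\be}$-thickening being absorbed because $W$ is exactly the $R^{1/2+\be}$-collar of $Z(P)$) followed by interchanging the sums is exactly that standard argument. One minor citation slip: the final step $\sum_{T\in\ZT}\|f_T\|_2^2\lesssim\|Sf\|_2^2$ comes from \eqref{wave-packet-orthogonality} with $p=2$ together with Plancherel over the finitely overlapping $\theta$, not from \eqref{L2-orthogonality}, which is the inequality in the opposite direction.
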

\begin{lemma}\footnote{See Lemma 3.5 in \cite{Guth-restriction-R3}.}
\label{transverse-lemma1}
Each tube $T$ belongs to $\ZT_{k,trans}$ for $O({\rm Poly}(d))$ many $k$. Combining the almost $L^2$-orthogonality  \eqref{L2-orthogonality}, we have
\begin{equation}
    \sum_{B_k\in\ZB_{\mu_1}}\|f_{\cT(B_k),trans}\|_2^2\lesssim{\rm Poly}(d)\|Sf\|_2^2.
\end{equation}
\end{lemma}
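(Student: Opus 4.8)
The plan is to establish the lemma in two stages: first the combinatorial assertion built into the statement, that every $T\in\ZT$ belongs to $\ZT_{k,trans}$ for at most $O({\rm Poly}(d))$ indices $k$ with $B_k\in\ZB$, and then the asserted $L^2$ bound, which follows mechanically from the first stage together with the wave-packet orthogonality \eqref{L2-orthogonality}. For the second stage, fix $B_k\in\ZB_{\mu_1}$; unwinding the definitions in Section 5, $f_{\cT(B_k),trans}$ is a partial sum of $\sum_{T\in\ZT_{k,trans}}f_T$ (the terms with $T$ dual to some $\theta\subset 2\tau$, $\tau\in\cT(B_k)$), so \eqref{L2-orthogonality} (whose $R^\be$ Schwartz-tail loss we suppress as usual) gives $\|f_{\cT(B_k),trans}\|_2^2\lesssim\sum_{T\in\ZT_{k,trans}}\|f_T\|_2^2$. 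Summing over $B_k\in\ZB_{\mu_1}$, interchanging the order of summation and using the combinatorial assertion,
\[
\sum_{B_k\in\ZB_{\mu_1}}\|f_{\cT(B_k),trans}\|_2^2\lesssim\sum_{T\in\ZT}\#\{k:\,T\in\ZT_{k,trans}\}\,\|f_T\|_2^2\lesssim{\rm Poly}(d)\sum_{T\in\ZT}\|f_T\|_2^2,
\]
and a final appeal to \eqref{wave-packet-orthogonality} and Plancherel bounds $\sum_{T\in\ZT}\|f_T\|_2^2\lesssim\|Sf\|_2^2$, completing this stage.

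The content is the combinatorial assertion, which is Guth's Lemma 3.5 in \cite{Guth-restriction-R3}; I would reprove it as follows. Fix $T\in\ZT$ with unit direction $v(T)$, set $\wt T=10R^\be T$ (dimensions $\sim R^{1/2+\be}\times R^{1/2+\be}\times R$), let $\pi$ be orthogonal projection onto $v(T)^\perp$ so that $\pi(\wt T)$ is a disk $D$ of radius $\sim R^{1/2+\be}$, and for $p\in D$ let $\ell_p$ be the line through $p$ in direction $v(T)$. With $V=Z(P)$, a hypersurface of degree $O(d)$, let
\[
S=\{z\in\wt T:\,P(z)=0,\ \nabla P(z)\neq 0,\ {\rm Angle}(v(T),T_zV)>R^{-1/2+\de}\}
\]
be the transverse nonsingular locus appearing in the definition of $\ZT_{k,trans}$. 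Two inputs drive the proof. First, the fundamental theorem of algebra controls the fibres of $\pi|_S$: if $z\in S$ then $\partial_{v(T)}P(z)=\langle\nabla P(z),v(T)\rangle=|\nabla P(z)|\,|\langle n(z),v(T)\rangle|\neq 0$ (with $n(z)$ the unit normal), so $P$ restricted to $\ell_{\pi(z)}$ is a nonzero univariate polynomial of degree $\leq\deg P$ having $z$ as a simple zero; hence every line $\ell_p$ meets $S$ in at most $O(d)$ points. Second, the quantitative transversality makes $S$ a tame union of graphs: near any $z\in S$, $V$ is a graph $s=\phi(u)$ in coordinates where $v(T)$ is the $s$-axis, and the angle bound is equivalent to $1+|\nabla\phi|^2\lesssim R^{1-2\de}$, i.e.\ $|\nabla\phi|\lesssim R^{1/2-\de}$; so each connected component of $S$ is a graph over a connected subset of $D$ with slope $\lesssim R^{1/2-\de}$, hence has extent $\lesssim R^{1/2+\be}\cdot R^{1/2-\de}=R^{1-\de+\be}$ along $v(T)$ and $\lesssim R^{1/2+\be}$ transversally.

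To conclude, a connected component of $S$ fits in a box of dimensions $\lesssim R^{1/2+\be}\times R^{1/2+\be}\times R^{1-\de+\be}$ and so meets only $O(R^\be)$ of the $R^{1-\de}$-cubes $B_k$; since $\be=\e^{1000}<\e^6$ we have $R^\be<R^{\e^6}=d$, so each component meets $O(d)$ cubes. The number of connected components of $S$ is $O({\rm Poly}(d))$: $S$ is the degree-$O(d)$ variety $V\cap\wt T$ with the singular locus of $V$ and the algebraic tangency set $\{P=0,\ \langle\nabla P,v(T)\rangle^2=\sin^2(R^{-1/2+\de})|\nabla P|^2\}$ (both of degree $O({\rm Poly}(d))$) deleted, and a Milnor--Thom / B\'ezout-type bound (compare \eqref{generalized-bezout} and the polynomial-geometry input of \cite{Guth-restriction-R3}) controls the number of components of the resulting arrangement. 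Since each $B_k$ with $T\in\ZT_{k,trans}$ supplies a point of $S$ lying in $10B_k$, and the cubes $B_k$ have bounded overlap, the number of such $k$ is at most a constant multiple of the number of components of $S$ times $O(R^\be)$, hence $O({\rm Poly}(d))$. (One may instead simply quote Guth's Lemma 3.5 together with Lemma 4.1 of \cite{Guth-restriction-R3}, applied with $\rho\sim R^{-1/2+\de}$ and $a\sim R^{1/2}$, as the remark after the statement indicates.)

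The main obstacle is the component count in the last step: the fundamental-theorem-of-algebra bound on a single line and the local graph description are elementary, but organizing the transverse locus $S$ into $O({\rm Poly}(d))$ genuinely connected sheets of small longitudinal extent — in particular controlling how $V$ re-enters $\wt T$ and where the transversality degenerates — is exactly where the real polynomial geometry enters. The rest (the reduction to the combinatorial assertion and the $L^2$ bookkeeping) is routine.
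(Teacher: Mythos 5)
Your reduction of the lemma to the combinatorial claim (each $T$ lies in $\ZT_{k,trans}$ for $O(\mathrm{Poly}(d))$ many $k$), and the subsequent $L^2$ bookkeeping via \eqref{L2-orthogonality}, interchange of summation, \eqref{wave-packet-orthogonality} and Plancherel, are correct and are exactly what the paper intends. Your fallback --- quoting Guth's Lemma 3.5 together with his cylinder lemma applied with $\rho\sim R^{1/2+\be}$ and $a\sim R^{-1/2+\de}$ --- is literally the paper's proof, which consists of that citation and nothing more. The slope computation ($|\nabla\phi|\lesssim R^{1/2-\de}$ from the angle threshold) and the resulting longitudinal extent $R^{1/2+\be}\cdot R^{1/2-\de}=R^{1-\de+\be}$, which fits inside $O(R^{O(\be)})$ of the $R^{1-\de}$-cubes, are also the right quantitative inputs.

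However, your attempted self-contained proof of the combinatorial claim has a genuine gap at the step ``each connected component of $S$ is a graph over a connected subset of $D$.'' Being locally a graph with uniformly bounded slope plus connectedness does not make the projection $\pi|_C$ injective: a connected surface can wind around a line parallel to $v(T)$ many times while its tangent plane stays uniformly transverse to $v(T)$ (a helicoid-type sheet does this with slope bounded by $1$). Without injectivity, the longitudinal extent of a component is bounded by (slope) $\times$ (length of a projected path), not by (slope) $\times$ (diameter of $D$), and the path length inside the disk $D$ is not a priori controlled; the fact that vertical lines meet $Z(P)$ in $\leq\deg P$ points limits the multiplicity of $\pi|_C$ but does not by itself bound path lengths. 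Controlling this winding is precisely the content of Guth's cylinder lemma, so your sketch does not replace it. Separately, your component count for $S$ appeals to \eqref{generalized-bezout}, but that lemma of the paper counts components only of transverse complete intersections, whereas $S$ is a variety with two algebraic subsets \emph{deleted} (a semialgebraic set); bounding its components requires a Milnor--Thom/Oleinik--Petrovsky-type bound, which this paper deliberately avoids and replaces by the citation to Guth. So the proposal is acceptable only in its ``quote Guth'' form; the self-contained version as written is incomplete at exactly the step you flagged as the main obstacle.
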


We conclude what we did above into a lemma for clarity:
\begin{lemma}
\label{one-step-partitioning}
There exists:
\begin{enumerate}
    \item A polynomial $P$ with degree $O(d)$;
    \item A collection of cells $\co$ with $|\co|\sim d^3$ such that any $O\in\co$, $O$ is contained in an $Rd^{-1}$ cube in $B_{2R}$;
    \item A wall $W=N_{R^{1/2+\be}}Z(P)\cap B_{2R}$, two collections of $R^{1-\de}$ cubes $\ZB$, $\ZB_{\mu_1}$, and a number $N_1=|\ZB_{\mu_1}|$;
    \item Several functions $f^O$, $f_{k,tang}$, $f_{k,trans}$, $f_{\tau,k,tang}$, $f_{\cT(B_k),trans}$ and several collections of tubes $\ZT_{O}$, $\ZT_{k,trans}$, $\ZT_{k,tang}$, $\ZT_{\theta,k,tang}$, etc.;
\end{enumerate}
such that
\begin{equation}
    \sum_{O\in\co}\|f^O\|_2^2\lesssim d\|Sf\|_2^2;
\end{equation}
\begin{equation}
    \sum_{B_k\in\ZB_{\mu_1}}\|f_{\cT(B_k),trans}\|_2^2\lesssim {\rm Poly}(d)\|Sf\|_2^2.
\end{equation}
What is more, we have 
\begin{eqnarray}
\label{cell-trans-tang1}
\nonumber
    \int_{B_R}\!\!|{\rm Br}_\al Sf|^p\!\!\!\!\!\!\!\!\!\!\!&&\leq C_\e K^{10}\sum_{O\in\co}\int_{\co'}|{\rm Br}_{2\al} f^O|^p+C_\e K^{10}\sum_{B_k\in\ZB}\int_{B_k\cap W}{\rm Bil}(f_{k,tang})^p\\ \nonumber
    &&+~C_\e K^{10}\sum_{B_k\in\ZB_{\mu_1}}\int_{B_k\cap W}|{\rm Br}_{2\al} f_{\cT(B_k), trans}|^p\\ 
    &&:=I+II+III.
\end{eqnarray}
\end{lemma}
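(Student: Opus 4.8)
The plan is to assemble the statement directly from the results already established in this subsection; no genuinely new ingredient is required. First I would apply Proposition~\ref{final-partition} to the non-negative $L^1$ function $|{\rm Br}_\al Sf|^p$, which is supported in $B_R\subset B^3(0,R)$, with the fixed parameter $d$. This produces the polynomial $P$ of degree $O(d)$ and the collection $\co$ of $\sim d^3$ cells, each contained in an $Rd^{-1}$ cube inside $B_{2R}$ and carrying $\int_O|{\rm Br}_\al Sf|^p\sim d^{-3}\int_{B_R}|{\rm Br}_\al Sf|^p$. Setting $W=N_{R^{1/2+\be}}(Z(P))\cap B_{2R}$ and $O'=O\setminus W$, I split
\begin{equation}
\nonumber
\int_{B_R}|{\rm Br}_\al Sf|^p\lesssim \int_W|{\rm Br}_\al Sf|^p+\sum_{O\in\co}\int_{O'}|{\rm Br}_\al Sf|^p,
\end{equation}
and, using the wave packet decomposition of $Sf$ from Section~3, I define the tube sets $\ZT_O=\{T:O'\cap R^\be T\neq\varnothing\}$ and the functions $f^O=\sum_{T\in\ZT_O}f_T$. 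Lemma~\ref{1-2-broadness-cell} then upgrades ${\rm Br}_\al Sf$ to ${\rm Br}_{2\al}f^O$ pointwise on each $O'$, producing the term $I$.

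For the wall part I would cover $W$ by the finitely overlapping family $\ZB=\{B_k\}$ of $R^{1-\de}$ cubes, so that $\int_W|{\rm Br}_\al Sf|^p\lesssim\sum_{B_k\in\ZB}\int_{B_k\cap W}|{\rm Br}_\al Sf|^p$. Since $\al=K^{-\e}<10^{-5}$ for $R$ large, Lemma~\ref{1-2-broadness-wall} gives the pointwise inequality \eqref{trans-tangent-broadness} on $B_k\cap W$. Raising it to the $p$-th power, applying H\"older to the $\tau$-sum together with the cardinality bound $|\cT|\lesssim K^2$, and choosing $\cT(B_k)$ to be the maximizer of $\|\Id_{B_k\cap W}{\rm Br}_{2\al}f_{\cT_\tau,k,trans}\|_p$ over $\tau\in\cT$, I obtain for each $B_k$ a bound of the form
\begin{equation}
\nonumber
\int_{B_k\cap W}|{\rm Br}_\al Sf|^p\leq C K^{10}\Big(\int_{B_k\cap W}|{\rm Br}_{2\al}f_{\cT(B_k),trans}|^p+\int_{B_k\cap W}{\rm Bil}(f_{k,tang})^p\Big),
\end{equation}
where, since $p<10/3$, the powers of $K$ coming from Lemma~\ref{1-2-broadness-wall} and from H\"older are all absorbed into $K^{10}$. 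Summing over $B_k\in\ZB$ yields the full bilinear term $II$ together with a transverse sum over all of $\ZB$; a single application of the dyadic pigeonholing Lemma~\ref{choice-of-mu} then replaces that transverse sum by the one over $\ZB_{\mu_1}$, at the cost of a harmless $\log R$ factor, and I set $N_1=|\ZB_{\mu_1}|$. This is exactly \eqref{cell-trans-tang1}.

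It remains to record the two $L^2$ estimates, and these are precisely Lemma~\ref{tranverse-lemma2} and Lemma~\ref{transverse-lemma1}. The first combines the incidence fact that each tube $T$ lies in $\ZT_O$ for only $O(d)$ cells $O$ with the almost $L^2$-orthogonality \eqref{L2-orthogonality}, giving $\sum_{O\in\co}\|f^O\|_2^2\lesssim d\|Sf\|_2^2$; the second combines the fact that each $T$ lies in $\ZT_{k,trans}$ for only ${\rm Poly}(d)$ values of $k$ with \eqref{L2-orthogonality}, giving $\sum_{B_k\in\ZB_{\mu_1}}\|f_{\cT(B_k),trans}\|_2^2\lesssim{\rm Poly}(d)\|Sf\|_2^2$. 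Since the statement is essentially bookkeeping, the only points needing care are that all multiplicative losses incurred along the way — the $\log R$ from pigeonholing and the assorted powers of $K$ — fit inside the single factor $C_\e K^{10}$, and that the asymmetry between $II$ (summed over all of $\ZB$) and $III$ (summed over $\ZB_{\mu_1}$ only) is intentional, since $\ZB_{\mu_1}$ is exactly the collection on which the transverse term will be iterated below.
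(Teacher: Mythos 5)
Your proposal is correct and follows essentially the same route as the paper: Proposition~\ref{final-partition} applied to $|{\rm Br}_\al Sf|^p$, the cell/wall splitting with Lemma~\ref{1-2-broadness-cell}, Lemma~\ref{1-2-broadness-wall} plus the maximizing choice of $\cT(B_k)$ on the wall, the dyadic pigeonholing of Lemma~\ref{choice-of-mu} to produce $\ZB_{\mu_1}$, and the incidence counts of Lemmas~\ref{tranverse-lemma2} and~\ref{transverse-lemma1} for the two $L^2$ bounds. The bookkeeping of the $K$-powers and the $\log R$ loss matches the paper's treatment, so nothing further is needed.
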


\subsection{Two-step polynomial partitioning}\hfill

We start from \eqref{cell-trans-tang1}. We set $\co_1=\co$, $W^1=W$ and $\ZB^1=\ZB$ in \eqref{cell-trans-tang1} so that $\co_1$ represents cells deriving from the one-step polynomial partitioning. In order to adapt the idea in the last subsection smoothly, we will alter \eqref{cell-trans-tang1} a little. For each cell $O_1\in\co_1$, let $\bar B_{O_1}$ be the $Rd^{-1}$ cube containing $O_1$ and let $B_{O_1}=2\bar B_{O_1}$. We point out that for different $O_1\in\co_1$, the associated cube $B_{O_1}$ can highly overlap. 

We choose two smooth cutoff functions $\vp_{O_1}$, $\vp_{B_k}$ associated to each $O_1\in\co_1$ and each $B_k\in\ZB_{\mu_1}$, respectively. Here we require:
\begin{enumerate}
    \item $\vp_{O_1}\geq c$ on $B_{O_1}$ and $\vp_{B_k}\geq c$ on $B_k$.
    \item ${\rm supp}(\wh\vp_{O_1})\subset B(0,3dR^{-1})$, $|\wh{\vp}_{O_1}|\geq c$ on $B(0,dR^{-1})$ and ${\rm supp}(\wh\vp_{B_k})\subset B(0,3R^{\de-1})$, $|\wh{\vp}_{B_k}|\geq c$ on $B(0,R^{\de-1})$ for a small constant $c$. 
\end{enumerate}
Therefore, from the inequality \eqref{cell-trans-tang1} we have
\begin{eqnarray}
\label{cell-trans-tang2}
\nonumber
    \int_{B_R}\!\!|{\rm Br}_\al Sf|^p\!\!\!\!\!\!\!\!\!\!\!&&\leq C_\e K^{10}\sum_{O_1\in\co_1}\int_{O_1'}|{\rm Br}_{2\al} (f^{O_1}\vp_{O_1})|^p+C_\e K^{10}\!\!\!\sum_{B_k\in\ZB^1}\!\int_{B_k\cap W^1}\!\!\!\!{\rm Bil}(f_{k,tang})^p\\ \nonumber
    &&+~C_\e K^{10}\sum_{B_k\in\ZB_{\mu_1}}\int_{B_k\cap W^1}|{\rm Br}_{2\al} (f_{\cT(B_k), trans}\vp_{B_k})|^p\\ 
    &&:=I+II+III.
\end{eqnarray}
We will call $I$ cell term, $II$ tangent term, $III$ transverse term.

\vspace{3mm}

Let us see how to use polynomial partitioning for another time. When we say ``the tangent case dominates", we mean that $II\geq\max\{I,III\}$. We can make similar definitions for the other two cases. 

\vspace{3mm}

${\bf 1.~ Tang:}$ When the tangent case dominates, we will stop and do nothing. 

\vspace{3mm}

${\bf 2.~ Cell:}$ Otherwise, assuming the cell case dominates. We set $r_1=Rd^{-1}$, and let $r=r_1$ for brevity. We also define $\co^1$ by
\begin{equation}
    \co^1=\{O_1':O_1\in\co_1\},
\end{equation}
to record the collection of sets that we encounter at step 1.

Notice that the Fourier support of $f^{O_1}\vp_{O_1}$ is contained in $N_{Cr^{-1}}(\Ga)$. For each $O_1\in\co_1$, we define $g_{O_1}=f^{O_1}\vp_{O_1}$ and let $g=g_{O_1}$ in short. Then, we do wave packet decomposition on $g$ at the scale $r$:
\begin{equation}
    g=\sum_{\om}\sum_{I\in\ZI_\om} g_{\om,I}.
\end{equation}

Next, for each $O_1\in\co_1$, we do polynomial partitioning for $\Id_{O_1'}|{\rm Br}_{2\al}g|^p$ to obtain a polynomial $P$ of degree $O(d)$ and a wall $W=N_{r^{1/2+\be}}Z(P)\cap B_{O_1}$. For some technical reasons, we need to modify the definitions of ``transverse" and ``tangent". Let $\ZB^2=\{B_k\}$ be a collection of finitely overlapping $rR^{-\de}$ cubes that $B_k\subset B_{O_1}$. For each $B_k\in\ZB^2$, we define $\ZI_{k,tang},\ZI_{k,trans}$ by:
\begin{definition}
\label{tangent-definition}
$\ZI_{k,tang}$ is the set of tubes $I\in\ZI$ obeying the following two conditions:
\begin{enumerate}
    \item [$\bullet$] $r^\be I\cap W\cap B_k\not=\varnothing$.
    \item [$\bullet$] If $z$ is any non-singular point of $Z(P)$ lying in $10B_k\cap 10r^\be I$, then 
    \begin{equation}
        |{\rm Angle}(v(I),T_zZ(P)|\leq r^{-1/2}R^\de.
    \end{equation}
\end{enumerate}
\end{definition}
\begin{definition}
$\ZI_{k,trans}$ is the set of tubes $I\in\ZI$ obeying the following two conditions:
\begin{enumerate}
    \item [$\bullet$] $r^\be I\cap W\cap B_k\not=\varnothing$.
    \item [$\bullet$] There exists a non-singular point $z$ of $Z(P)$ in $10 B_k\cap 10r^\be I$, so that
    \begin{equation}
    \label{transverse-condition2}
        |{\rm Angle}(v(I),T_zZ(P)|>r^{-1/2}R^{\de}.
    \end{equation}
\end{enumerate}
\end{definition}
Since $r$ is much larger than $R^\de$, we still can apply the Lemma 4.1 in \cite{Guth-restriction-R3}. Henceforth, similar to Lemma \ref{one-step-partitioning}, we can obtain
\begin{enumerate}
    \item A polynomial $P$ with degree $O(d)$;
    \item A collection of cells $\co_2$ with $|\co_2|\sim d^3$ such that any $O_2\in\co_2$, $O_2$ is contained in an $rd^{-1}$ cube in $B_{O_1}$;
    \item A wall $W^2=N_{r^{1/2+\be}}Z(P)\cap B_O$, two collections of $rR^{-\de}$ cubes $\ZB^2$, $\ZB_{\mu_2}$,  and a number $N_2=|\ZB_{\mu_2}|$;
    \item Several collections of functions $g^{O_2}$, $g_{k,tang}$, $g_{k,trans}$, $g_{\tau,k,tang}$, $g_{\cT(k), trans}$ and several collections of sets $\ZI_{O_2}$, $\ZI_{k,trans}$, $\ZI_{k,tang}$; $\ZI_{\om,k,tang}$ etc. ;
\end{enumerate}
such that
\begin{equation}
    \sum_{O_2\in\co_2}\|g^{O_2}\|_2^2\lesssim d\|g\|_2^2.
\end{equation}
\begin{equation}
    \sum_{B_k\in\ZB_{\mu_2}}\|g_{\cT(B_k),trans}\|_2^2\lesssim {\rm Poly}(d)\|g\|_2^2.
\end{equation}
What is more, we have 
\begin{eqnarray}
\nonumber
\label{one-step-partition-cell}
    \int_{O_1'}\!\!|{\rm Br}_{2\al} g|^p\!\!\!\!\!\!\!\!\!\!\!&&\leq C_\e K^{10}\sum_{O_2\in\co_2}\int_{O_2'}|{\rm Br}_{4\al} g^{O_2}|^p+C_\e K^{10}\sum_{B_k\in\ZB^2}\int_{B_k\cap W^2}{\rm Bil}(g_{k,tang})^p\\ \nonumber
    &&+~C_\e K^{10}\sum_{B_k\in\ZB_{\mu_{2}}}\int_{B_k\cap W^2}|{\rm Br}_{4\al} g_{\cT(B_k),trans}|^p\\  \label{cell-decendent}
    &&:=I_{O_1}+II_{O_1}+III_{O_1}.
\end{eqnarray}

In addition, since $g=f_{O_1}\vp_{O_1}$, and since the Fourier support of $f_{O_1}$ is contained in an $R^{-1}$-neighborhood of $\Ga$, we have from the local $L^2$ estimate \eqref{local-l2} that
\begin{equation}
    \|g\|_2^2\lesssim d^{-1}\|f^{O_1}\|_2^2,
\end{equation}
which implies
\begin{equation}
    \sum_{O_2\in\co_2}\|g^{O_2}\|_2^2\lesssim \|f^{O_1}\|_2^2
\end{equation}
and
\begin{equation}
    \sum_{B_k\in\ZB_{\mu_{2}}}\|g_{\cT(B_k),trans}\|_2^2\lesssim {\rm Poly}(d)d^{-1}\|f^{O_1}\|_2^2.
\end{equation}

The dyadic number $\mu_2$, as well as the number $N_2$, depend on the cell $O_1\in\co_1$. However, since $\mu_2, N_2\in[R^{-C},R]$, we can assume that $\mu_2, N_2$ are two uniform numbers for all $O_1\in\co_1'$, via two dyadic pigeonholing tricks. Here $\co_1'$ is a subset of $\co_1$, such that $|\co_1|\lesssim (\log R)^2|\co_1'|$. Since a loss of $(\log R)^2$ is negligible in our argument, without loss of generality, we assume $\co_1'=\co_1$.

Since the sets $W^2$, $\ZB_{\mu_2}$ depend on $O_1$ implicitly, and since $O_1$ and $O_1'$ are one-to-one, we use $W^2(O_1')$, $\ZB_{\mu_2}(O_1')$ to indicate the dependence between the set $O_1'\in\co^1$ and the two sets $W^2$, $\ZB_{\mu_2}$.

\vspace{3mm}

At this stage, we can call $\sum_{O_1\in\co_1}I_{O_1}$ cell term, $\sum_{O_1\in\co_1}II_{O_1}$ tangent term, and $\sum_{O_1\in\co_1}III_{O_1}$ transverse term. We remark that the number of sets in the cell term is $\sim d^{6}$, and the number of sets in the transverse term is $\sim d^3N_2$.

\vspace{3mm}

${\bf 3. ~Trans:}$ Otherwise, assuming the transverse case dominates. We set $r_1=R^{1-\de}$ and let $r=r_1$ for brevity. Similarly, we define $\co^1$ by
\begin{equation}
    \co^1=\{B\cap W^1:B\in\ZB_{\mu_1}\},
\end{equation}
to record the collection of sets that we encounter at step 1.

For any cube $B\in\ZB_{\mu_1}$, we define $g_B=f_{\cT(B),trans}\vp_{B}$ and let $g=g_B$ in short. Then, we do wave packet decomposition at the scale $r$ for $g$ and polynomial partitioning for $\Id_{W^1\cap B}{\rm Br}_{2\al}g^p$ similarly to have
\begin{enumerate}
    \item A polynomial $P$ with degree $O(d)$;
    \item A collection of cells $\co_2$ with $|\co_2|\sim d^3$ such that any $O_2\in\co_2$, $O_2$ is contained in an $rd^{-1}$ cube in $B_{O_1}$;
    \item A wall $W^2=N_{r^{1/2+\be}}Z(P)\cap B$, two collections of $rR^{-\de}$ cubes $\ZB^2$, $\ZB_{\mu_2}$, and a number $N_2=|\ZB_{\mu_2}|$. Here we assume that the numbers $\mu_2,N_2$ are uniform for $B\in\ZB_{\mu_1}$, for the same reason mentioned at the end of cell case;
    \item Several collections of functions $g^{O_2}$, $g_{k,tang}$, $g_{k,trans}$, $g_{\tau,k,tang}$, $g_{\cT(k), trans}$ and several collections of sets $\ZI_{O_2}$, $\ZI_{k,trans}$, $\ZI_{k,tang}$; $\ZI_{\om,k,tang}$ etc. ;
\end{enumerate}
such that
\begin{equation}
    \sum_{O_2\in\co_2}\|g^{O_2}\|_2^2\lesssim d\|g\|_2^2.
\end{equation}
\begin{equation}
    \sum_{B_k\in\ZB_{\mu_2}}\|g_{\cT(B_k),trans}\|_2^2\lesssim {\rm Poly}(d)\|g\|_2^2.
\end{equation}
What is more, we have 
\begin{eqnarray}
\nonumber
    \int_{B\cap W^1}\!\!|{\rm Br}_{2\al} g|^p\!\!\!\!\!\!\!\!\!\!\!&&\leq C_\e K^{10}\sum_{O_2\in\co_2}\int_{O_2'}|{\rm Br}_{4\al} g^{O_2}|^p+C_\e K^{10}\!\!\!\sum_{B_k\in\ZB^2}\int_{B_k\cap W^2\cap W^1}\!\!\!\!\!\!\!\!{\rm Bil}(g_{k,tang})^p\\ \nonumber
    &&+~C_\e K^{10}\sum_{B_k\in\ZB_{\mu_{2}}}\int_{B_k\cap W^2\cap W^1}|{\rm Br}_{4\al} g_{\cT(B_k),trans}|^p\\  \label{transverse-decendent}
    &&:=I_B+II_B+III_B.
\end{eqnarray}
In addition, we have 
\begin{equation}
    \sum_{O_2\in\co_2}\|g^{O_2}\|_2^2\lesssim dR^{-\de}\|f_{\cT(B),trans}\|_2^2
\end{equation}
and
\begin{equation}
    \sum_{B_k\in\ZB_{\mu_2}}\|g_{\cT(B_k),trans}\|_2^2\lesssim {\rm Poly}(d)R^{-\de}\|f_{\cT(B),trans}\|_2^2
\end{equation}
from the local $L^2$ estimate
\begin{equation}
    \|g\|_2^2\lesssim R^{-\de}\|f_{\cT(B),trans}\|_2^2.
\end{equation}

Similarly, since the sets $W^2$, $\ZB_{\mu_2}$ depend on $B$ implicitly, and since the set $B$ and the set $B\cap W^1$ are one-to-one, we use $W^2(B\cap W^1)$, $\ZB_{\mu_2}(B\cap W^1)$ to indicate the dependence between the set $B\cap W^1\in\co^1$ and the two sets $W^2$, $\ZB_{\mu_2}$.

\vspace{3mm}

Now we can call $\sum_{B\in\ZB_{\mu_1}}I_{B}$ cell term, $\sum_{B\in\ZB_{\mu_1}}II_{B}$ tangent term, $\sum_{B\in\ZB_{\mu_1}}III_{B}$ transverse term. We remark that the number of sets in the cell term is $\sim d^{3}N_1$, and the number of sets in the transverse term is $\sim N_1N_2$. 

\subsection{Iterated polynomial partitioning}\hfill

Generally, assume $m\geq2$ and assume that we have done polynomial partitioning for $m$ times. Then we do the following:
\begin{enumerate}
    \item If the cell case dominates, and if $r_{m-1}d^{-1}>R^{\e/4}$, we first set $r_m=r_{m-1}d^{-1}$. Then, we define $\co^m$ by
    \begin{equation}
        \co^m=\{O_m':O_m\in\co_m\},
    \end{equation}
    to record the sets we encounter at the step $m$.
    After that, similar to what we did in the last subsection, we do wave packet decomposition for each cell function $g_{O_m}$ at the scale $r_{m}$, and do polynomial partitioning on each function $\Id_{O_m'}|{\rm Br}_{2^m}g_{O_m}|^p$. As a result, we will have collections of functions, sets, tubes, as well as several estimates for the step $m$.
    \item If the transverse case dominates, and if  $r_{m-1}R^{-\de}>R^{\e/4}$, we similarly set $r_m=r_{m-1}R^{-\de}$ and define 
    \begin{equation}
        \co^m:=\{B\cap O\cap W^{m}(O):B\in\ZB_{\mu_{m}}(O),~O\in\co^{m-1}~{\rm is~the~parent~of~}B\},
    \end{equation}
    to record the sets we encounter at the step $m$.
    Then, we do wave packet decomposition for each transverse function $g_B$ at the scale $r_{m}$, and do polynomial partitioning on each function $\Id_{B\cap O\cap W^m(O)}|{\rm Br}_{2^m}g_B|^p$. As a result, we will have collections of functions, sets, tubes, as well as several estimates for the step $m$.
    \item If we are in neither of the two cases above, we stop.
\end{enumerate}

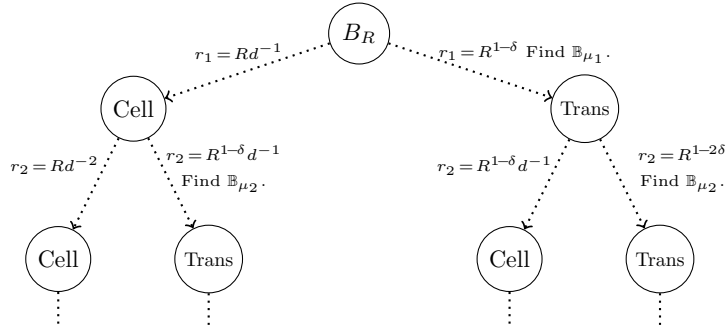
\begin{figure}
\begin{tikzpicture}

\node[shape=circle,draw=black,scale=.9] (0) at (0,0) {$B_R$};
\node[shape=circle,draw=black,scale=.9] (1-1) at (-3,-1) {Cell};
\node[shape=circle,draw=black,scale=.75] (1-2) at (3,-1) { Trans};
\node[shape=circle,draw=black,scale=.9] (2-1) at (-4,-3) {Cell};
\node[shape=circle,draw=black,scale=.75] (2-2) at (-2,-3) { Trans};
\node[shape=circle,draw=black,scale=.9] (2-3) at (2,-3) {Cell};
\node[shape=circle,draw=black,scale=.75] (2-4) at (4,-3) { Trans};

\node (3-1) at (-4,-4) {};
\node (3-2) at (-2,-4) {};
\node (3-3) at (2,-4) {};
\node (3-4) at (4,-4) {};

\path [->,draw, dotted, thick](0) edge node[left] {} (1-1);
\path [->,draw, dotted, thick](0) edge node[left] {} (1-2);
\path [->,draw, dotted, thick](1-1) edge node[left] {} (2-1);
\path [->,draw, dotted, thick](1-1) edge node[left] {} (2-2);
\path [->,draw, dotted, thick](1-2) edge node[left] {} (2-3);
\path [->,draw, dotted, thick](1-2) edge node[left] {} (2-4);
\path [draw, dotted, thick](2-1) edge node[left] {} (3-1);
\path [draw, dotted, thick](2-2) edge node[left] {} (3-2);
\path [draw, dotted, thick](2-3) edge node[left] {} (3-3);
\path [draw, dotted, thick](2-4) edge node[left] {} (3-4);

\node at (-1.6,-0.25) {\tiny $r_1\!=\!Rd^{-1}$};
\node at (2.2,-0.25) {\tiny $r_1\!=\!R^{1\!-\!\de}$ Find $\ZB_{\mu_1}$.};
\node at (-4.05,-1.75) {\tiny $r_2\!=\!Rd^{-2}$};
\node at (-1.8,-1.6) {\tiny $r_2\!=\!R^{1\!-\!\de}d^{-1}$};
\node at (-1.8,-2) {\tiny Find $\ZB_{\mu_2}$.};
\node at (1.8,-1.75) {\tiny $r_2\!=\!R^{1\!-\!\de}d^{-1}$};
\node at (4.3,-1.6) {\tiny $r_2\!=\!R^{1\!-\!2\de}$};
\node at (4.3,-2) {\tiny Find $\ZB_{\mu_2}$.};

\end{tikzpicture}
\caption{A diagram for the iterated polynomial partitioning algorithm.}
\label{figure-1}
\end{figure}

Supposing when we stop, we find that we have run polynomial partitioning for cell case for $s_c$ times, and transverse case for $s_t$ times. We let $s=s_c+s_t$ and let $r=Rd^{-s_c}R^{-s_t\de}$ for brevity. 

Notice that when we stop, we will have a collection of sets $\cf=\{F_j\}$. We explain what $\cf$ is here: Suppose that we stop because $rd^{-1}<R^{\e/4}$, then $\cf$ is the set $\co^s$. Otherwise, suppose that we stop at the tangent case, then $\cf$ is the set defined as
\begin{equation}
    \cf:=\{B\cap O\cap W^{s+1}(O):B\in\ZB^{s+1}(O),~O\in\co^s~{\rm is~the~parent~of~}B\}.
\end{equation}
Here $W^{s+1}$ and $\ZB^{s+1}$ are derived from the last polynomial partitioning. We remark that we only encounter the tangent case at the last step.

The diameter of $F_j$ has an upper bound $Rd^{-s_c}R^{-s_t\de}$ when we stop because $r\leq R^{\e/4}$, and $Rd^{-s_c}R^{-(s_t+1)\de}$ when we stop at the tangent case. Also, if we assume that we are facing the transverse case at the step $m_l$, we have an approximation of the cardinality of $\cf$ with
\begin{equation}
\label{captical-f}
    |\cf|\sim  R^{O(\de)}d^{3n}\prod_{l=1}^{s_t}|N_{m_l}|.
\end{equation}

Since $d=R^{\e^6}$, $\de=\e^2$, we get $s\leq\e^{-6}$. As a result, $2^s\al=2^s K^{-\e}\leq\e^{\e^{-100}}<10^{-10}$, which justifies the validity of \eqref{trans-tangent-broadness} throughout our iterated polynomial partitioning algorithm.

\vspace{3mm}
At the end of this section, we will obtain \eqref{broad-estimate} for stopping because $r\leq R^{\e/4+\de}$. We will discuss the tangent case in the next section.

For each $F_j$, from the iteration algorithm we know that there is a function $g_j$, whose Fourier support is contained in the ball $B^3(0,2)$, such that
\begin{equation}
\nonumber
    \int_{B_R}|{\rm Br}_\al Sf|^p \leq (C_\e K^{10})^{s}\sum_{F_j\in\cf}\int_{F_j}{\rm Br}_{2^s\al}(g_{j})^p\sim(C_\e K^{10})^{s} |\cf|\int_{F_j}{\rm Br}_{2^s\al}(g_{j})^p.
\end{equation}
We also have an $L^2$ estimate
\begin{equation}
    \sum_{F_j\in\cf}\|g_{j}\|_2^2\lesssim \big({\rm Poly}(d)\big)^{s_t} R^{-s_t\de}\|Sf\|_2^2.
\end{equation}
Since $\|g_j\|_\infty\lesssim1$, we obtain $\|{\rm Br}_{2^s\al}g_j\Id_{F_j}\|_p^p\lesssim\|g_j\|_2^p$. Hence by pigeonholing, we can find a particular set $F_j$ such that
\begin{eqnarray}
    \int_{B_R}|{\rm Br}_\al Sf|^p &\lesssim& (C_\e K^{10})^{s}|\cf|\int_{F_j}{\rm Br}_{2^s\al}(g_{j})^p\\ \nonumber
    &\lesssim&(C_\e K^{10})^{s}\big({\rm Poly}(d)\big)^{\frac{ps_t}{2}} R^{-\frac{ps_t\de}{2}}|\cf|^{1-\frac{p}{2}}\|Sf\|_2^p.
\end{eqnarray}
The coefficient $(C_\e K^{20})^{s}\big({\rm Poly}(d)\big)^{\frac{ps_t}{2}}$ is bounded above by $O(R^{\e^2})$, so
\begin{equation}
    \int_{B_R}|{\rm Br}_\al Sf|^p\leq C_\e R^{\e^2} R^{-\frac{ps_t\de}{2}}|\cf|^{1-\frac{p}{2}}\|Sf\|_2^p.
\end{equation}
Since $|\cf|$ has a lower bound in \eqref{captical-f} that $|\cf|\gtrsim d^{3s_c}$, and since $d^{s_c}R^{s_t\de}=Rr^{-1}\geq R^{1-{\e/4}}$, combining with $\|Sf\|_2\leq\|f\|_2\leq R^{\frac{3}{2}-\frac{3}{p}}\|f\|_p$ which is obtained by Plancherel and H\"older's inequality, we have for $3<p<10/3$,
\begin{equation}
    \int_{B_R}|{\rm Br}_\al Sf|^p\leq C_\e R^{\e^2+p\e/4}R^{p-3}\|f\|_p^p.
\end{equation}
Taking the $p$-th root to both sides we get \eqref{broad-estimate}. \qed

\section{Tangential contributions}
Suppose that the iterated polynomial partitioning algorithm stops at the tangent case. Then for each $F_j\in\cf$, we have a polynomial $P_j$ of degree $O(d)$, a function $g_{j,tang}$ and a collection of tubes $\ZI_{j,tang}$ associated with it. The polynomial $P_j$, the function $g_j$ and the set $\ZI_{j,tang}$ have the following simple properties: Each tube in $\ZI_{j,tang}$ has dimensions $\sim r^{1/2}\times r^{1/2}\times r$; If we let $B_{F_j}$ be the $rR^{-\de}$ cube containing $F_j$, then the wall $W_j:=N_{r^{1/2+\be}}Z(P_j)\cap B_{F_j}$ contains the set $F_j$; The function $g_{j,tang}$ is a sum of wave packets $g_I$ at the scale $r$. If we define a set of ``directions"
\begin{equation}
\label{la-s-j}
    \La_{s,j} =\{\om_s:\exists I\in\ZI_{j,tang},I~{\rm dual~to~}\om_s\},
\end{equation}
we can write down the explicit formula of each wave packet $g_I$ for $I\in\ZI_{j,tang}$ by $g_I=g_{\om_s,O_s}\Id_{I}^\ast$, $\om_s\in\La_{s,j}$, where
\begin{equation}
\nonumber
    g_{\om_s,O_s}=\Big[\sum_{\substack{\om_{s-1}\\\sim\om_s}}\sum_{\substack{I_{s-1}\\ \in\ZI_{\om_{s-1}}}}\!\!\!\big[\cdots\sum_{\om_1\sim\om_2}\sum_{I_1\in\ZI_{\om_1}}[\sum_{\theta\sim\om_1}f_\theta\sum_{T\in\ZT_\theta'}\Id_T^\ast]\vp_{O_1}\Id_{I_1}^\ast\cdots\big]\Id_{I_{s-1}}^\ast\vp_{O_{s-1}}\Big]\vp_{O_s}.
\end{equation}
Here the sets $O_u\in\co^u$, $1\leq u\leq s$, are uniquely determined by $g_{I}$. The set $\ZI_{\om_u}$ is a collection of tubes dual to $\om_u$ that was chosen from the step $u$ in our iteration. We remark that $\ZI_{\om_u}$ depends on the set $O_u$ implicitly. $\ZT_\theta'\subset\ZT_\theta$ is the collection of tubes we chose when applying polynomial partitioning at the very first time.

We can also write $g_{\om_s,O_s}$ in an inductive manner. For each $1\leq u\leq s-1$, if $O_{u+1}$ is a child of a set $O_u\in\co^{u}$, we let
\begin{equation}
\label{tangent-inductive-1}
    g_{\om_{u+1},O_{u+1}}=\vp_{O_{u+1}}\sum_{\substack{\om_{u} \sim\om_{u+1}}}g_{\om_u,O_u}\sum_{{I_{u}\in\ZI_{\om_{u}}}}\Id_{I_{u}}^\ast;
\end{equation}
For $g_{\om_1}$, we let
\begin{equation}
\label{tangent-inductive-2}
    g_{\om_1,O_1}=\vp_{O_1}\sum_{\theta\sim\om_1}f_\theta\sum_{T\in\ZT_\theta'}\Id_T^\ast.
\end{equation}

Recall that from the iterated polynomial partitioning algorithm, we have the following estimates:
\begin{enumerate}
    \item A broad estimate
    \begin{equation}
    \label{broad-tangent-square-fcn}
        \int_{B_R}|{\rm Br}_\al Sf|^p \leq (C_\e K^{10})^{s}\sum_{F_j\in\cf}\int_{F_j}{\rm Bil}(g_{j,tang})^p.
    \end{equation}
    \item An $L^2$ estimate
    \begin{equation}
    \label{tangent-l2-estimate}
        \sum_{F_j\in\cf}\|g_{j,tang}\|_2^2\lesssim \big({\rm Poly}(d)\big)^{s_t} R^{-s_t\de}\|Sf\|_2^2.
    \end{equation}
\end{enumerate}
Since we only encounter the tangent case at the last step, by a dyadic pigeonholing argument, there is a set $\cf'\subset\cf$ satisfying $|\cf|\lesssim(\log R)^2|\cf'|$, such that when $F_j\in\cf'$, $\|\Id_{F_j}{\rm Bil}(g_{j,tang})\|_p^p$ are the same up to a constant factor, and their sum dominates $(\log R)^{-1}(C_\e K^{10})^{-s}\|\Id_{B_R}{\rm Br}_\al Sf\|_p^p$. Since a loss of $(\log R)^2$ is negligible, without loss of generality, we assume $\cf'=\cf$, so that for all $F_j\in\cf$, $\|\Id_{F_j\cap W}{\rm Bil}(g_{j,tang})\|_p$ are the same up to a constant factor. Also, since each $O_s\in\co^s$ has at most $R^{3\de}$ children $F_j\in\cf$, at a cost of $R^{3\de}$, we can assume that each $O_s$ has exactly one child $F_j$, by a pigeonholing argument.

\vspace{3mm}

The next lemma states that tubes in $\ZI_{j,tang}$ are well localized. Recall that each set $F_j\in\cf$ is contained in the $rR^{-\de}$ cube $B_{F_j}$.
\begin{lemma}
\label{location-lemma}
Fix $F_j\in\cf$. For any $I\in\ZI_{j,tang}$, we define $I^{\rm cut}$ to be the portion of $I$ inside $B_{F_j}$, so $I^{\rm cut}$ is a tube with dimensions $\sim r^{1/2}\times r^{1/2}\times rR^{-\de}$. Then the tube $I^{\rm cut}$ is contained in the set $N_{r^{1/2}R^{O(\de)}}Z(P_j)$.
\end{lemma}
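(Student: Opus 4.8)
The plan is to reduce the statement to a one-dimensional continuity argument along the core of $I^{\rm cut}$. Fix $F_j\in\cf$ and $I\in\ZI_{j,tang}$, let $\gamma:[0,L]\to\ZR^3$ be the unit-speed parametrization of (a slight extension of) the core segment of $I^{\rm cut}$, so that $L\lesssim rR^{-\de}$, $\gamma'\equiv v(I)$, and $\gamma([0,L])$ is essentially contained in $B_{F_j}$. Set $\phi(t)={\rm dist}(\gamma(t),Z(P_j))$. Since $I\in\ZI_{j,tang}$, with the relevant $rR^{-\de}$ cube being $B_{F_j}$, the first tangency condition $r^\be I\cap W_j\cap B_{F_j}\neq\varnothing$ produces a parameter $t_0\in[0,L]$ with $\phi(t_0)\lesssim r^{1/2+\be}$, and the second tangency condition gives $\angle(v(I),T_zZ(P_j))\leq r^{-1/2}R^{\de}$ for every non-singular $z\in Z(P_j)\cap 10B_{F_j}\cap 10r^\be I$. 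It then suffices to show $\phi(t)\lesssim r^{1/2}R^{O(\de)}$ on $[0,L]$: since $I^{\rm cut}$ has cross-section $\sim r^{1/2}$, this places all of $I^{\rm cut}$ inside $N_{r^{1/2}R^{O(\de)}}Z(P_j)$.

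The key ingredient is the derivative estimate $|\phi'(t)|\leq r^{-1/2}R^{\de}$, valid whenever $\phi$ is differentiable at $t$ (it is $1$-Lipschitz, so this holds for a.e. $t$), $\phi(t)\lesssim r^{1/2+\be}$, and the nearest point $z(t)\in Z(P_j)$ to $\gamma(t)$ is non-singular. This follows from the first-variation identity $\phi'(t)=\big\langle\gamma'(t),\,(\gamma(t)-z(t))/|\gamma(t)-z(t)|\big\rangle$: for non-singular $z(t)$ the unit vector $(\gamma(t)-z(t))/\phi(t)$ is normal to $T_{z(t)}Z(P_j)$, so $|\phi'(t)|=\sin\angle(v(I),T_{z(t)}Z(P_j))$, and when $\phi(t)\lesssim r^{1/2+\be}$ the point $z(t)$ lies within $O(r^{1/2+\be})$ of the core of $I$, hence inside $10r^\be I\cap 10B_{F_j}$, so the tangency angle bound applies. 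A bootstrap/continuity argument then propagates $\phi\lesssim r^{1/2+\be}$ outward from $t_0$ across any subinterval of $[0,L]$ on which the nearest point stays non-singular: the accumulated change there is at most $r^{-1/2}R^\de\cdot L\sim r^{1/2}$, which is absorbed by the bound $\lesssim r^{1/2+\be}$ being maintained (recall $\be=\e^{1000}\ll\e^2=\de$, so $r^{1/2+\be}\leq r^{1/2}R^{O(\de)}$).

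The remaining — and main — obstacle is the behaviour near the singular locus $\Si:={\rm Sing}(Z(P_j))$, where the angle condition is unavailable. Two features rescue the argument. First, $\Si\subset Z(P_j)$, so $\phi(t)\leq{\rm dist}(\gamma(t),\Si)$ always; hence whenever $\gamma(t)$ lies within the target distance $\sim r^{1/2}R^{O(\de)}$ of $\Si$ it automatically lies that close to $Z(P_j)$. Second, $\Si$ is an algebraic set of dimension $\leq 1$ cut out by $P_j$ together with its partial derivatives, so it has degree ${\rm Poly}(d)$; in the present setting, where $P_j$ is assembled from the non-singular factors and transverse complete intersections furnished by Proposition \ref{final-partition}, $\Si$ is in fact a union of ${\rm Poly}(d)$ smooth curves of degree ${\rm Poly}(d)$. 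By the fundamental theorem of algebra, restricting the relevant polynomials to the line $\gamma$ partitions $[0,L]$ into ${\rm Poly}(d)=R^{O(\de)}$ subintervals on each of which either $\gamma(t)$ stays within $\sim r^{1/2}R^{O(\de)}$ of $\Si$ (and $\phi$ is controlled for free) or the nearest point on $Z(P_j)$ is non-singular throughout (and the derivative estimate of the previous paragraph applies). Chaining the bound across these subintervals — the drift contributions totalling at most $\sim r^{1/2}$ since the subintervals partition $[0,L]$, and the ``restart'' values at transitions being bounded by the fixed near-$\Si$ threshold $\sim r^{1/2}R^{O(\de)}$ rather than accumulating — yields $\phi(t)\lesssim r^{1/2}R^{O(\de)}$ throughout, and adding the $r^{1/2}$ cross-section of $I^{\rm cut}$ completes the proof. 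I expect the delicate part to be fixing the bootstrap thresholds consistently across the alternating regimes and verifying that $\gamma$ cannot escape to distance $\gg r^{1/2}R^{O(\de)}$ from $Z(P_j)$ in between; this parallels the tangential-tube estimates in \cite{Guth-restriction-R3}.
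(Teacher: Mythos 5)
Your core mechanism --- that the tangency condition forces $|\phi'(t)|\leq r^{-1/2}R^{\de}$ for $\phi(t)={\rm dist}(\gamma(t),Z(P_j))$, so that over the length $rR^{-\de}$ of $I^{\rm cut}$ the total drift is $O(r^{1/2})\leq r^{1/2+\be}$ and the bootstrap threshold is preserved --- is sound, and it is the dual of what the paper does. The paper starts from a point $x\in Z(P_j)$ with $|x'|\lesssim r^{1/2+\be}$ and walks \emph{along the variety}: the tangent plane at each (non-singular) point makes angle $\leq r^{-1/2}R^{\de}$ with $v(I)$, so by Taylor's theorem the zero set continues forward in the $v(I)$ direction with lateral drift $\leq 2r^{-1/2}R^{\de}$ per unit length, and iterating produces points of $Z(P_j)$ within $10r^{1/2+\be}$ of the core line along the whole length of $I^{\rm cut}$. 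You instead control the tube's distance to the variety; the numerology is identical. The paper's formulation has the advantage that it never needs to locate the \emph{nearest} point of $Z(P_j)$ to $\gamma(t)$, which is exactly where your version gets into trouble.

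The genuine gap is in your treatment of the singular locus. Your dichotomy on each of the ${\rm Poly}(d)$ subintervals is: either ${\rm dist}(\gamma(t),\Si)\leq r^{1/2}R^{C\de}$ (so $\phi$ is bounded for free), or the nearest point $z(t)$ is non-singular (so the derivative bound applies). But the derivative bound does not merely need $z(t)$ non-singular; it needs $z(t)\in 10r^{\be}I\cap 10B_k$, i.e.\ $\phi(t)\lesssim r^{1/2+\be}$, because Definition \ref{tangent-definition} constrains angles only at non-singular points inside that region. When $\gamma$ exits a near-$\Si$ interval, all you know is $\phi\leq r^{1/2}R^{C\de}$, and since $r^{\be}\leq R^{\be}\ll R^{\de}$ this restart value can greatly exceed the $10r^{1/2+\be}$ cross-section of $10r^{\be}I$; at such $t$ the nearest point, though non-singular, lies outside the region where the angle condition holds, $|\phi'|$ is only bounded by $1$, and $\phi$ could in principle grow to $\sim rR^{-\de}$ over the remainder of the tube. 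So the chaining across regimes, which you yourself flag as ``delicate,'' is where the argument actually breaks, and the ${\rm Poly}(d)$-interval decomposition does not close it. The way to repair this is to move to the variety side, as the paper implicitly does: write $P_j=\prod_i P_i$ with each factor non-singular and the pairwise zero sets transverse complete intersections (Proposition \ref{final-partition}), and continue the chain through a crossing by following the sheet $Z(P_i)$ one is currently on, whose tangent planes obey the angle bound by continuity from nearby non-singular points of $Z(P_j)$. This keeps the quantity being propagated at the scale $r^{1/2+\be}$ throughout, with no large ``restart'' values.
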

\begin{proof}
For any $I\in\ZI_{j,tang}$, we know from Definition \ref{tangent-definition} that $r^\be I\cap W_j\not=\varnothing$.  We assume that the core line of $I$ is the $e_1$ axis, so there is a point $x\in Z(P_j)$ with $|x'|\leq 2r^{1/2+\be}$. This implies ${\rm Angle}(T_x Z(P),e_1)\leq r^{-1/2}R^\de$. By Taylor's theorem, there is another point $y\in Z(P)$ such that $y_1> x_1$, $|y'|\leq 2r^{-1/2}R^{\de}(y_1-x_1)+|x'|$. Notice that as long as $|y'|\leq10r^{1/2+\be}$, we can replace $x$ by $y$ in the above argument to find another point $z\in Z(P_j)$ with $z_1> y_1$ and $|z'|\leq2r^{-1/2}R^{\de}(z_1-y_1)+|y'|$. Therefore, we can repeat the argument above and eventually find a point $u\in Z(P_j)$ with $u_1-x_1\geq rR^{-\de}$ and $|u'|\leq10r^{1/2+\be}$.

Similarly, on the other direction, we can find a point $v\in Z(P_j)$ with $x_1-v_1\geq rR^{-\de}$ and $|v'|\leq10r^{1/2+\be}$. The existence of the points $u$ and $v$ proves that $I^{\rm cut}\subset N_{r^{1/2}R^{O(\de)}}Z(P_j)$, since $R\geq r$ and $\de>\be$.
\end{proof}

We also have a sharp estimate on each tangent part. Intuitively, we may assume that all tubes in the set $\ZI_{j,tang}$ come from one planar slab of thickness $r^{1/2}$, so that the C\'ordoba-Fefferman $L^4$ observation\footnote{See for example \cite{Fefferman-spherical} and \cite{Cordoba-B-R}.} is applicable. Guth essentially proved in \cite{Guth-restriction-R3} that
\begin{equation}
\label{cordoba-type-lemma}
    \int_{F_j}{\rm Bil}(g_{j,tang})^p\lesssim r^{\frac{5}{2}-\frac{5p}{4}}R^{O(\de)}\|g_{j,tang}\|_2^p.
\end{equation}

We make a remark on a possible refinement when summing up \eqref{cordoba-type-lemma} with all $F_j\in\cf$. We will not use the refinement in the rest of our argument.
\begin{remark}
{\rm
If we sum up all the $F_j\in\cf$ using \eqref{cordoba-type-lemma} directly, then generally we will face a huge loss, because the sets of tubes $\ZI_{j,tang}$ can highly overlap. Instead, we can prove the following refinement
\begin{equation}
\label{refinement-cordoba-type}
    \sum_{F_j\in\cf}\int_{F_j}{\rm Bil}(g_{j,tang})^p\lesssim r^{\frac{5}{2}-\frac{5p}{4}}R^{O(\de)}\Big(\sum_{I\in\ZI_{tang}}\|g_I\|_2^2\Big)^{\frac{p}{2}}.
\end{equation}
Here $\ZI_{tang}$ is the union of $\ZI_{j,tang}$. 

{\emph{ Sketch of Proof.} }We will first prove that
\begin{equation}
\label{cordoba-l4}
    \sum_{F_j\in\cf}\int_{F_j}{\rm Bil}(g_{j,tang})^4\lesssim r^{-\frac{5}{2}}R^{O(\de)}\Big(\sum_{I\in\ZI_{tang}}\|g_I\|_2^2\Big)^2.
\end{equation}

Let $\cq$ be a collection of finitely overlapping $r^{1/2}$ cubes in $B_{2R}$. For each $Q\in\cq$, we define $\ZI_{j,Q,tang}$ to be the collection of tubes in $\ZI_{j,tang}$ that intersect $Q$. For any two different caps $\tau_1,\tau_2\in\cT$, similar to Lemma 3.10 in \cite{Guth-restriction-R3}, we have  that for fixed $F_j\in\cf$,
\begin{equation}
\nonumber
    \int_Q|g_{\tau_1,j,tang}|^2|g_{\tau_1,j,tang}|^2\lesssim R^{O(\de)}\sum_{\om_1\subset2\tau_1}\sum_{\om_2\subset2\tau_2}\sum_{I_1,I_2\in\ZI_{j,tang}}\int_Q|g_{\om_1,I_1}|^2|g_{\om_2,I_2}|^2,
\end{equation}
which is roughly
\begin{equation}
    \sim R^{O(\de)}\sum_{\om_1\subset2\tau_1}\sum_{\om_2\subset2\tau_2}\sum_{I_1,I_2\in\ZI_{j,tang}}\|g_{\om_1,I_1}\|_\infty^2\|g_{\om_2,I_2}\|_\infty^2\int_Q\Id_{I_1}\Id_{I_2}.
\end{equation}
We let $\chi(Q,F_j)$ be the incidence function that $\chi(Q,F_j)=1$ if $Q\cap F_j\not=\varnothing$ and $\chi(Q,F_j)=0$ otherwise. It follows that
\begin{equation}
\nonumber
    \sum_{F_j\in\cf}\int_{F_j}{\rm Bil}(g_{j,tang})^4\lesssim\sum_{\tau_1,\tau_2}\sum_{Q\in\cq}\sum_{F_j\in\cf}\chi(Q,F_j)\int_Q|g_{\tau_1,j,tang}|^2|g_{\tau_1,j,tang}|^2.
\end{equation}
Combining the three inequalities above, we have
\begin{eqnarray}
\label{refined-cordoba-sum}
    &&\sum_{F_j\in\cf}\int_{F_j}{\rm Bil}(g_{j,tang})^4 \lesssim\\ \nonumber
   && R^{O(\de)}\sum_{\tau_1,\tau_2}\sum_{Q\in\cq}\sum_{F_j\in\cf}\sum_{\om_i\subset\tau_i}\sum_{I_i\in\ZI_{j,tang}}\chi(Q,F_j)\|g_{\om_1,I_1}\|_\infty^2\|g_{\om_2,I_2}\|_\infty^2\int_Q\Id_{I_1}\Id_{I_2}.
\end{eqnarray}
Since for two different sets $F_j,F_j'\in\cf$, either $F_j,F_j'$ are $r^{1/2}$ separated, or $F_j,F_j'$ are contained in two disjoint cubes of diameter greater than $r$, respectively. Therefore, for fixed $Q$, there are $O(1)$ many $F_j\in\cf$ that $\chi(Q,F_j)\not=0$. We use this observation to simplify \eqref{refined-cordoba-sum}, and obtain \eqref{cordoba-l4}.

Finally, it is easy to see that
\begin{equation}
\label{simple-l2}
    \sum_{F_j\in\cf}\int_{F_j}{\rm Bil}(g_{j,tang})^2\lesssim \sum_{I\in\ZI_{tang}}\|g_I\|_2^2.
\end{equation}
Therefore, combining \eqref{cordoba-l4} and \eqref{simple-l2}, via H\"older's inequality, we get \eqref{refinement-cordoba-type}.    \qed

}
\end{remark}

\vspace{3mm}

So far we have broken down the origin operator $|{\rm Br}_\al Sf|^p$ into pieces, as shown in  \eqref{broad-tangent-square-fcn}. By the sharp bilinear estimate \eqref{cordoba-type-lemma}, we can pile our pieces up via $L^2$ space. However, the $L^2$ estimate \eqref{tangent-l2-estimate} from our iteration algorithm does not make use of the tangential information. So next we will careful study each $g_{j,tang}$, and find a proper way to sum them up.

As mentioned in the introduction, we are going to consider the Nikodym maximal function. Note that the direction of any wave packet $f_T$ at the scale $R$ is transverse to the plane $\{x_3=0\}$. We define $L_a$ to be the plane $\{x_3=a\}$, and we are going to consider the intersection between certain tubes $T$ at the scale $R$ and the plane $L_a$, for any $|a|\leq 2R$. This is a standard idea to study the Nikodym maximal function.

The next lemma captures some information of the intersection between the plane $L_a$ and tubes at the scale $r$, coming from $\ZI_{j,tang}$. Its proof uses Wolff's hairbrush argument, similar to the Lemma 4.9 in \cite{Guth-restriction-R3}.
\begin{lemma}
\label{incidence-lemma}
Fix $F_j\in\cf$ and recall that $F_j$ is contained in the $rR^{-\de}$ cube $B_{F_j}$. For each $I\in\ZI_{j,tang}$, we set $\mathring{I}$ to be the tube that has the same cross section as $I$, but is infinity in length. Let $\mathring{\ZI}_{j,tang}$ be the collection of all these $\mathring{I}$. Let $\{y_l\}_{l=1}^M$ be a collection of the largest $10Rr^{-1/2}$ separated points in the set 
\begin{equation}
    L_a\bigcap\big\{\cup_{\mr{I}\in\mr{\ZI}_{j,tang}}\mathring{I}\big\}.
\end{equation}
Then, we have an upper bound for $M$ with
\begin{equation}
\label{separate-pt-estimate}
    M\lesssim R^{O(\de)}r^{1/2},
\end{equation}
uniformly for all $|a|\leq 2R$.
\end{lemma}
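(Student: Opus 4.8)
The plan is to run Wolff's hairbrush argument for tubes tangent to a surface, following Lemma 4.9 of \cite{Guth-restriction-R3}. First I would reduce to a statement about tubes: for each separated point $y_l$ fix one infinite tube $\mr{I}_l\in\mr{\ZI}_{j,tang}$ through $y_l$; after discarding duplicates we may assume the $\mr{I}_l$ are pairwise distinct, and it then suffices to bound their number by $R^{O(\de)}r^{1/2}$. Two geometric facts will drive the whole argument. First, by Lemma \ref{location-lemma} the truncation $I_l^{\rm cut}$ lies in $N_{r^{1/2}R^{O(\de)}}(Z(P_j))\cap B_{F_j}$, so near $B_{F_j}$ each $I_l$ is tangent to the degree-$O(d)$ surface $Z(P_j)$. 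Second, the normalisation $\nabla^2\Phi(0)=-I_2$, $|\nabla^3\Phi|\le10^{-10}$ forces $|\nabla\Phi|\le1$ on $\overline{B^{n-1}(0,1/2)}$, and a tube's direction drifts by only $O(r^{-1/2})$ as one passes down the iteration, so every $\om_s\in\La_{s,j}$ makes an angle $\sim1$ with the plane $L_a$; hence $\mr{I}_l\cap L_a$ sits in a ball of radius $\lesssim r^{1/2}$, and two distinct $\mr{I}_l$ cannot be within $r^{1/2}$ of one another near $L_a$, for otherwise the corresponding $y_l$ would be $\lesssim r^{1/2}<10Rr^{-1/2}$ apart.

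Next I would dispose of the degenerate range $\operatorname{dist}(L_a,B_{F_j})\lesssim rR^{-\de}$. There $L_a$ already meets an $O(1)$-dilate of $B_{F_j}$, so $I_l^{\rm cut}\cap L_a\subset N_{r^{1/2}R^{O(\de)}}\!\big(Z(P_j)\cap L_a\big)$; but $Z(P_j)\cap L_a$ is a plane algebraic curve of degree $O(d)$ inside a disc of radius $\sim rR^{-\de}$, whose $r^{1/2}R^{O(\de)}$-neighbourhood has area $\lesssim d\,r^{1/2}R^{O(\de)}\cdot rR^{-\de}$. Dividing by $(Rr^{-1/2})^2$ and using $r\le R$ and $d=R^{\e^6}\le R^{\de}$ shows that this neighbourhood contains at most $O(R^{O(\de)}r^{1/2})$ points that are $10Rr^{-1/2}$-separated, which is the claim. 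We may therefore assume $\operatorname{dist}(L_a,B_{F_j})\gtrsim rR^{-\de}$ and run the hairbrush inside $B_{F_j}$.

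For the remaining range I would argue by contradiction, assuming $M\ge C_0R^{C_1\de}r^{1/2}$, and apply the standard hairbrush dichotomy to the tubes $\{I_l^{\rm cut}\}$, which lie in the slab $N_{r^{1/2}R^{O(\de)}}(Z(P_j))\cap B_{F_j}$ of volume $\lesssim d\,r^{5/2}R^{O(\de)}$. Either the multiplicity of this family is $\lesssim R^{O(\de)}$, so the trivial packing bound already gives $M\lesssim R^{O(\de)}r^{1/2}$; or some point lies in many of the tubes, giving a bush that we extend to a hairbrush $\ch$ around one tube $\mr{I}_0$ with $|\ch|\gtrsim M R^{-O(\de)}\mu^{-1}$, the multiplicity $\mu$ being controlled against the volume bound above exactly as in Lemma 4.9 of \cite{Guth-restriction-R3}. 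But every tube of $\ch$ is tangent to $Z(P_j)$ within $r^{1/2}R^{O(\de)}$ of $\mr{I}_0$ and passes within $r^{1/2}R^{O(\de)}$ of $\mr{I}_0$; hence $\ch$ lies in the $r^{1/2}R^{O(\de)}$-thickening of the tangent plane of $Z(P_j)$ along $\mr{I}_0$ and fans out from the fixed line $\mr{I}_0$, so it carries at most one tube per $r^{-1/2}$-separated direction in that plane, giving $|\ch|\lesssim r^{1/2}R^{O(\de)}$. Comparing the two bounds for $|\ch|$ forces $M\lesssim r^{1/2}R^{O(\de)}$, contradicting the choice of $C_0$; this proves \eqref{separate-pt-estimate}. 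The bound is uniform over $|a|\le 2R$ because the hairbrush step takes place entirely inside $B_{F_j}$, and $L_a$ enters only through the $10Rr^{-1/2}$-separation of the $y_l$.

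The hard part will be the hairbrush step under the variety constraint: making precise that a bush, hence a hairbrush, of tubes tangent to the two-dimensional $Z(P_j)$ is essentially \emph{planar}, so that only the two-dimensional Kakeya loss $r^{1/2}$ enters rather than the trivial count $\sim r$ of directions, and carrying the multiplicity $\mu$ and every implied constant along so that nothing worse than $R^{O(\de)}$ is lost despite $\deg Z(P_j)$ being as large as $d=R^{\e^6}$. This is exactly what is imported from Lemma 4.9 of \cite{Guth-restriction-R3}, and it is consistent with the parameter choices $d=R^{\e^6}$, $\de=\e^2$, $\be=\e^{1000}$, which keep $d$ and $R^{\be}$ below $R^{O(\de)}$.
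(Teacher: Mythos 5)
Your overall skeleton (reduce to counting the selected tubes, use Lemma \ref{location-lemma} to confine them to $N_{r^{1/2}R^{O(\de)}}Z(P_j)$, transversality to $L_a$, then a Wolff hairbrush) is the same as the paper's, but the decisive step --- bounding the hairbrush --- is wrong as you state it, in two ways. First, the claim that $\ch$ lies in the $r^{1/2}R^{O(\de)}$-thickening of \emph{the} tangent plane of $Z(P_j)$ along $\mr{I}_0$ is false in general: each tube in the hairbrush is only constrained by the tangent plane at the point of $Z(P_j)$ near its own intersection with $\mr{I}_0$, and these tangent planes can rotate as one moves along $\mr{I}_0$ (think of a ruled or twisted degree-$d$ surface containing a line), so the hairbrush is genuinely three-dimensional. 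The paper does not use planarity for the upper bound; it applies Wongkew's theorem to $N_{r^{1/2}R^{O(\de)}}Z(P_j)$ intersected with the fat $\nu r\times\nu r\times r$ tube around $I_1$, getting $|H|\lesssim R^{O(\de)}d\nu r^{5/2}$ (see \eqref{hairbrush-volume-2}); the factor $d$ is harmless, and the factor $\nu$ is needed to cancel the $\nu$ in the lower bound \eqref{number-of-tube}. Your argument has no mechanism producing either factor.

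Second, "at most one tube per $r^{-1/2}$-separated direction" is not justified and fails in the main regime. Two tubes of the hairbrush in the same $r^{-1/2}$-cap of directions can meet $\mr{I}_0$ at different points of its length $\sim rR^{-\de}$; their intersections with $L_a$ then differ by at most $rR^{-\de}+O(Rr^{-1/2})$, so for $r\gtrsim R^{2/3}$ (e.g.\ $r=Rd^{-1}$ after one cell step) up to $\sim r^{3/2}R^{-1-\de}$ of them can simultaneously satisfy the $10Rr^{-1/2}$-separation on $L_a$, and the direction count alone gives only $|\ch|\lesssim r^{2}R^{-1}$, not $r^{1/2}$. This is exactly the point where the separation hypothesis must do real work: the paper decomposes the hairbrush into $\sim r^{1/2}$ planar slabs through $I_1$ and shows, within each slab, that because the points $y_I$ lie in a thin strip of $L_a$ and are $10Rr^{-1/2}R^{\de}$-separated, the (dilated) tubes satisfy $\sum_{J'\neq J}|J\cap J'|\le |J|/2$, i.e.\ they are essentially disjoint; this yields the volume lower bound \eqref{hairbrush-volume-1}, which replaces your direction count. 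You invoke the separation only to make the $\mr{I}_l$ distinct, so in your main count the hypothesis is essentially unused --- a sign the argument cannot close. (Minor further issues: your degenerate-range reduction confuses $N_\rho(Z(P_j))\cap L_a$ with $N_\rho(Z(P_j)\cap L_a)$, and is not needed since the paper's argument makes no case split on $\operatorname{dist}(L_a,B_{F_j})$; and the quantitative form $|\ch|\gtrsim MR^{-O(\de)}\mu^{-1}$ of your dichotomy should be replaced by the Cauchy--Schwarz incidence count with $r^{1/2}$-cubes plus dyadic pigeonholing in the angle, which is what produces the usable lower bound \eqref{number-of-tube}.)
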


\begin{proof}
For some technical reasons, we need to consider $\ZI_{j,tang}^{\rm cut}$, the collection of corresponding tubes $I^{\rm cut}$ for all $I\in\ZI_{j,tang}$. Since a loss of $R^{2\de}$ is acceptable in \eqref{separate-pt-estimate}, we assume that the points $\{y_l\}_{l=1}^M$ are in fact $10Rr^{-1/2}R^{\de}$ separated,  satisfying a slightly stronger separation condition. For convenience, we will use $\ZI_{j,tang}$ to denote the set $\ZI_{j,tang}^{\rm cut}$ in the rest of the proof. 

\vspace{3mm}

For each point $y_l$, we pick one tube $\mathring{I}_l$ satisfying $y_l\in\mr{I}_l$, and denote the collection of all these tubes $\mr{I}_l$ by $\mr{\ZI}_{j,a}$. Since the map $\circ:\ZI_{j,tang}\to\mr\ZI_{j,tang}$ is one-to-one, the set $\mr{\ZI}_{j,a}$ induces a set $\ZI_{j,a}\subset\ZI_{j,tang}$. We point out that if two tubes $I_1,I_2\in\ZI_{j,a}$ intersect, then they make an angle $\gtrsim r^{-1/2}R^\de$. From Lemma \ref{location-lemma}, we know that $I\subset N_{r^{1/2}R^{O(\de)}}Z(P_j)\cap B_{F_j}$ for any $I\in\ZI_{j,a}$. If we let $\sq=\{Q\}$ be a collection of finitely overlapping $r^{1/2}$ cubes that $Q\cap N_{r^{1/2}R^{O(\de)}}Z(P_j)\cap B_{F_j}\not=\varnothing$, by Wongkew's theorem\footnote{See \cite{Guth-restriction-R3} Theorem 4.7.}, we get $\#\{Q\}\lesssim R^{O(\de)}rd$.

For any $Q\in \sq$, $I_1,I_2\in\ZI_{j,a}$, we define an incidence function $\chi(Q,I_1,I_2)$ by letting $\chi(Q,I_1,I_2)=1$  if $2I_1\cap 2I_2\cap Q\not=\varnothing$ and $\chi(Q,I_1,I_2)=0$ otherwise. Then, if we let $Q(I)$ be the number of tubes $I\in\ZI_{j,a}$ that intersect $Q$, by Cauchy-Schwarz inequality, 
\begin{equation}
    \sum_{Q,I_1,I_2}\chi(Q,I_1,I_2)\geq\sum_Q Q(I)^2\geq \#\{Q\}^{-1}\Big(\sum_Q Q(I)\Big)^2\geq R^{-O(\de)}d^{-1}|\ZI_{j,a}|^2.
\end{equation}

Suppose that we already have 
\begin{equation}
    |\ZI_{j,a}|\lesssim R^{O(\de)}r^{1/2},
\end{equation}
then there is nothing to prove. Otherwise, we get
\begin{equation}
    \sum_{I_1\not= I_2}\chi(Q,I_1,I_2)\gtrsim R^{-O(\de)}d^{-1}|\ZI_{j,a}|^2.
\end{equation}
Notice that when $I_1\not=I_2$, we have ${\rm Angle}(I_1,I_2)\geq cr^{-1/2}R^{\de}$. Thus, there exists a dyadic value $\nu\in[cr^{-1/2}R^{\de},1]$, so that
\begin{equation}
    \sum_{{\rm Angle}(I_1,I_2)\sim\nu}\chi(Q,I_1,I_2)\gtrsim R^{-O(\de)}d^{-1}|\ZI_{j,a}|^2.
\end{equation}
By pigeonholing, there exists a tube $I_1\in\ZI_{j,a}$ such that for fixed $I_1$,
\begin{equation}
    \sum_{{\rm Angle}(I_1,I_2)\sim\nu}\chi(Q,I_1,I_2)\gtrsim R^{-O(\de)} d^{-1}|\ZI_{j,a}|.
\end{equation}
We fix this tube $I_1$ from now on, and denote by $\ZI_\nu$ the collection of tubes in $\ZI_{j,a}$ who make an angle $\sim \nu$ with respect to $I_1$. Let $H$ be the hairbrush define by the union of tubes in $\ZI_\nu$. Since for any tube $I_2\in\ZI_\nu$, $2I_2\cap 2I_1$ intersects $\sim \nu^{-1}$ many $Q$, we have
\begin{equation}
\label{number-of-tube}
    |\ZI_\nu|\gtrsim\nu R^{-O(\de)}d^{-1}|\ZI_{j,a}|.
\end{equation}
 
Next, we claim that
\begin{equation}
\label{hairbrush-volume-1}
    |\ZI_\nu|r^2\lesssim R^{O(\de)}(\log r)|H|.
\end{equation}

Since any two tubes $I,I'\in\ZI_\nu$ make an angle $\gtrsim r^{-1/2}R^\de$ if $I\cap I'\not=\varnothing$, we can decompose the set $\ZI_\nu$ into $\sim r^{1/2}R^{-\de}$ subsets $\ZI_{\nu,l}$, such that tubes in $\ZI_{\nu,l}$ are contained in a planar slab $P_l$ of thickness $r^{1/2}$. These planar slabs $P_l$ have $I_1$ as their common intersection, and their normal vectors are $r^{-1/2}R^\de$ separated. 

For each tube $I\in\ZI_{\nu,l}$, we let $\ti I$ be the portion $I\setminus N_{cr\nu}(I_1)$ and let $\ti\ZI_{\nu,l}$ be the collection of these $\ti I$. Notice that $\ti I$ is still a tube of length $\sim rR^{-\de}$, since ${\rm Angle}(I_1,I)\sim\nu$. We define $H_l$ to be the union of tubes in $\ti\ZI_{\nu,l}$, so that the sets $\{H_l\}_l$ are finitely overlapped. Consequently, it suffices to show that in one single planar slab,
\begin{equation}
    |\ti\ZI_{\nu,l}|r^2\lesssim R^{O(\de)}(\log r)|H_l|.
\end{equation}

To save notations, we let $\ZI=\ti\ZI_{\nu,l}$, $H=H_l$ and $P=P_l$ in the rest of the proof. For each $I\in\ZI$, we let $\bar{I}$ be the dilated tube which is also contained in the planar slab $P$, satisfying that $\bar{I}$ has the same core line, center and width as $I$, while the length of $\bar{I}$ is $(2C\log r)R^{-\de}r$ for a big constant $C$. We define $\bar\ZI$ to be the collection of these dilated tubes $\bar I$. Thus, if we let $\bar{H}$ be the union of tubes in $\bar\ZI$, we have $|\bar{H}|\leq(2C\log r)^2|H|$. 


Recall that each tube $I\in\ZI$, or its stretch $\mr I$, corresponds to a point $y_I$ on the plane $L_a$. It follows that all the points $y_I$ lie in a rectangle of width $O(r^{1/2})$ on the plane $L_a$, since each $I$ is contained in the planar slab $P$, and since the planar slab $P$ is transverse to the plane $L_a$. Recall again that the points $\{y_I\}$ are $10Rr^{-1/2}R^{\de}$ separated, we can conclude for a fixed tube $J\in\bar\ZI$,
\begin{equation}
    \sum_{J'\in\bar\ZI,J\not=J'}|J\cap J'|\leq 10(\log r)r^2R^{-\de}\leq|J|/2.
\end{equation}
Consequently, since $|\ZI|=|\bar\ZI|$,  
\begin{equation}
    2|H'|\geq |\ZI|\cdot|J|=|\ZI|\cdot(C\log r)r^2R^{-\de}.
\end{equation}
Combining the calculations above we get
\begin{equation}
    |\ZI|r^2\lesssim R^{O(\de)}(\log r) |H|,
\end{equation}
which proves \eqref{hairbrush-volume-1}.

Finally, since $H$ is contained in a fat tube of dimensions $\sim\nu r\times\nu r\times r$, and since $H$ is also contained in the set $N_{r^{1/2}R^{O(\de)}}Z(P_j)$, by Wongkew's theorem, 
\begin{equation}
\label{hairbrush-volume-2}
    |H|\lesssim R^{O(\de)}d\nu r^\frac{5}{2}.
\end{equation}
Henceforth, combining \eqref{number-of-tube}, \eqref{hairbrush-volume-1}, \eqref{hairbrush-volume-2} and $d<R^{O(\de)}$, we conclude $|\ZI_{j,a}|\lesssim R^{O(\de)}r^{1/2}$. This implies \eqref{separate-pt-estimate}.
\end{proof}

\section{Pile things back}
In this section, we will use Lemma \ref{incidence-lemma} to create a backward algorithm, so that we can pile our wave packets which are summed in the function $g_{j,tang}$ back efficiently. The geometric observation inside the backward algorithm is that, for all $F_j\in\cf$, tubes in $\cup\ZI_{j,tang}$ are either well-separated, or ``related" to a bigger tube. 

\vspace{3mm}
Recall in the last section that our iteration algorithm stops at the step $s$, and recall that $\co^{u}$ is the collection of sets at the step $u$. Now we are going backwardly. 

\subsection{One-step backward algorithm}\hfill

${\bf 1.~Cell:}$ Suppose that at the step $s-1$, we were in the cell case. Then for any $O_{s-1}\in\co^{s-1}$, there are $O(d^3)$ many $F_j\in\cf$ that are the offspring of $O_{s-1}$, since we already assumed that each $O_s\in\co^s$ has at most one child $F_j\in\cf$. In the rest of this section, we will use the convention $F_j=O_s$ if $F_j$ is the unique child of $O_s$.

We fix one cell $O:=O_{s-1}\in\co^{s-1}$ at first, and let $\cf_O$ be the collection of sets $F_j$ that are the offspring of $O$. By the inductive relation \eqref{tangent-inductive-1}, we have
\begin{equation}
\label{tangent-inductive-3}
    g_{\om_{s},F_j}=\vp_{F_j}\sum_{\om_{s-1}\sim\om_{s}}g_{\om_{s-1},O}\sum_{{I_{s-1}\in\ZI_{\om_{s-1}}}}\Id_{I_{s-1}}^\ast.
\end{equation}
We can sum up $\|g_{j,tang}\|_2^2$ for $F_j\in\cf_O$ to get
\begin{equation}
    \sum_{F_j\in\cf_O}\|g_{j,tang}\|_2^2\lesssim\sum_{F_j\in\cf_O}\sum_{\om_s\in\La_{s,j}}\int |g_{\om_s,F_j}|^2\Big(\sum_{I\in\ZI_{\om_s,j,tang}}\Id_{I}^\ast\Big).
\end{equation}
Since the Fourier support of $\vp_{F_j}$ is contained in the $Cr^{-1}$ neighborhood of the origin, and since the Fourier support of the function $g_{\om_{s-1},O}\sum_{{I_{s-1}\in\ZI_{\om_{s-1}}}}\Id_{I_{s-1}}^\ast$ is contained in $N_{Cr^{-1}d^{-1}}(\Ga)$, by the local $L^2$ estimate \eqref{local-l2}, 
\begin{equation}
    \int |g_{\om_s,F_j}|^2\Big(\sum_{I\in\ZI_{\om_s,j,tang}}\Id_{I}^\ast\Big)\lesssim d^{-1}\sum_{\om_{s-1}\sim\om_s}\int|g_{\om_{s-1},O}|^2\Big(\sum_{\substack{I_{s-1}\in \\ \ZI_{\om_{s-1},j,tang}}}\Id_{I_{s-1}}^\ast\Big).
\end{equation}
Here $\ZI_{\om_{s-1},j,tang}\subset\ZI_{\om_{s-1}}$ is the collection of tubes $I_{s-1}$ dual to $\om_{s-1}$, that each $I_{s-1}$ contains at least one tube $I_s\in\ZI_{\om_s,j,tang}$, for $\om_{s-1}\sim\om_s$. We use $I_{s-1}\sim I_s$ to indicate that $I_s\subset I_{s-1}$ and $\om_{I_{s-1}}\sim\om_{I_s}$. Notice that $I_{s}\subset I_{s-1}$ does not imply $\om_{I_{s-1}}\sim\om_{I_s}$ in general.

We combine the two inequalities above, to obtain 
\begin{equation}
    \sum_{F_j\in\cf_O}\|g_{j,tang}\|_2^2\lesssim d^{-1}\sum_{\om_{s-1}}\int|g_{\om_{s-1},O}|^2\Big(\sum_{F_j\in\cf_O}\sum_{\substack{I_{s-1}\in \\ \ZI_{\om_{s-1},j,tang}}}\Id_{I_{s-1}}^\ast\Big).
\end{equation}

Next, for a dyadic number $v\in[1,Cd]$, we define $\ZI_{v,O}$ as the collection of tubes, each of which belongs to $\sim v$ many sets $\ZI_{\om_{s-1},j,tang}$. The point here is that $v$ has an upper bound $O(d)$. This follows from the fundamental theorem of algebra, and the fact that the polynomial we used in each partitioning step has degree $O(d)$. See Lemma \ref{tranverse-lemma2} for an identical explanation.

Hence, $\{\ZI_{v,O}\}_v$ forms a cover of the set $\cup_j\ZI_{\om_{s-1},j,tang}$. By two dyadic pigeonholing arguments, there exists a dyadic number $v_{s-1}$ and a set $\bar\co^{s-1}\subset\co^{s-1}$, such that 
\begin{equation}
    \sum_{O\in\co^{s-1}}\sum_{F_j\in\cf_O}\|g_{j,tang}\|_2^2\lesssim (\log R)\sum_{O\in\bar\co^{s-1}}\sum_{F_j\in\cf_O}\|g_{j,tang}\|_2^2,
\end{equation}
and uniformly for $O\in\bar\co^{s-1}$, 
\begin{equation}
\label{l2-backward-not-pigeonholing}
    \sum_{\om_{s-1}}\int|g_{\om_{s-1},O}|^2\Big(\sum_{F_j\in\cf_O}\sum_{\substack{I_{s-1}\in \\ \ZI_{\om_{s-1},j,tang}}}\Id_{I_{s-1}}^\ast\Big)\lesssim(\log R) v_{s-1}\sum_{\substack{I_{s-1}\in \\ \ZI_{v_{s-1},O}}}\|g_{I_{s-1}}\|_2^2.
\end{equation}
Here we use $g_{I}$ in short of $g_{\om}\Id_{I}^\ast$, where $\om$ is the dual of $I$. Since again, a loss of $\log R$ is negligible in our backward algorithm, we can assume $\bar\co^{s-1}=\co^{s-1}$ without loss of generality. We also define $\ZI_{v_{s-1},j}$ to be the subset of $\ZI_{j,tang}$, that for any tube $I_s\in\ZI_{v_{s-1},j}$, there are at least one $I_{s-1}\in\ZI_{v_{s-1},O}$ satisfying $I_{s-1}\sim I_s$. Intuitively, the set $\ZI_{v_{s-1},O}$ is a collection of bigger tubes determined by $\{\ZI_{j,tang}\}_j$, sets of smaller tubes; and the set $\ZI_{v_{s-1},j}$ is a collection of smaller tubes re-determined by $\ZI_{v_{s-1},O}$.

\vspace{3mm}

Now recall the definitions in Lemma \ref{incidence-lemma}. We claim that the number of the maximal $Rr^{-1/2}d^{-1}$ separated points in the set 
\begin{equation}
\label{plane-tube-incidence}
    L_{a,O}:=L_a\bigcap\big\{\cup_{\mr{I}\in\mr{\ZI}_{v_{s-1},O}}\mathring{I}\big\}
\end{equation}
is bounded above by $R^{O(\de)}r^{1/2}d^{5}v_{s-1}^{-1}$, uniformly for all $|a|\leq2R$.

Indeed, if we let $\{y_l\}_{l=1}^{M_{O}}$ be a collection of maximal $Rr^{-1/2}$ separated points in the set $L_{a,O}$, then for each point $y_l$, we can pick one tube $I_l\in\ZI_{v_{s-1},O}$ such that $y\in L_{a,O}\cap\mr{I}_l$. Since the bigger tube $I_{l}$ belongs to the set $\ZI_{v_{s-1},O}$, we can pick $\sim v_{s-1}$ many sets $\ZI_{v_{s-1},j}$ such that for each $j$, we can find a smaller tube $I_{l,j}\in\ZI_{v_{s-1},j}$ with $I_{l}\sim I_{l,j}$.

Next, we pick a point $z_{l,j}$ in $L_a\cap \mr{I}_{l,j}$ for each $I_{l,j}$. Then the number of points we picked, $\#\{z_{l,j}\}$, is  $\sim v_{s-1}M_O$. We will prove that $\#\{z_{l,j}\}\lesssim R^{O(\de)}r^{1/2}d^{3}$, and this would imply our claim.

The proof of $\#\{z_{l,j}\}\lesssim R^{O(\de)}r^{1/2}d^{3}$ follows easily from lemma \ref{incidence-lemma}. Observe that for fixed $j$, the points $z_{l,j}$ are morally $Rr^{-1/2}$ separated. This is because for fixed $j,l$, the point $z_{l,j}$ is contained in a $CRr^{-1/2}$ neighborhood of $L_a\cap\mr{I}_l$. See Figure \ref{figure-2} for an explanation. We know by Lemma \ref{incidence-lemma} that when $j$ is fixed, the number of points $z_{l,j}$ has an upper bound $CR^{O(\de)}r^{1/2}$. This gives $\#\{z_{l,j}\}\lesssim R^{O(\de)}r^{1/2}d^{3}$, by summing up all the $F_j\in\cf_O$, and the fact $|\cf_O|\lesssim d^3$. 

\begin{figure}
\begin{tikzpicture}
  
\draw[thick] (0,8) -- (8,8);
\node at (4,8.5) {$L_a$};
\draw[dashed] (2,.5) rectangle (6,7);
\draw[thick] (3.5,1.5) rectangle (4.5,6.5);
\draw[rotate around={15:(4,1.2)}] (4.15,2.2) rectangle (4.45,3.2);
\draw[rotate around={15:(4,3.2)}] (4.4,3.7) rectangle (4.1,4.7);
\draw[rotate around={15:(4,5.2)}] (3.95,5.2) rectangle (4.25,6.2);
\draw [dotted ] (2,7.1) -- (2,8);
\draw [dotted ] (6,7.1) -- (6,8);
\draw [loosely dashed, rotate around={15:(4,6)}] (3.92,6) -- (3.92,8);
\draw [loosely dashed, rotate around={15:(4,4)}] (4.0,4.5) -- (4.0,8.05);
\draw [loosely dashed, rotate around={15:(4,2.5)}] (3.95,3) -- (3.95,8.2);
\node at (3.93,2.7) {\footnotesize $I_1$};
\node at (4.01,4.2) {\footnotesize $I_2$};
\node at (3.98,5.7) {\footnotesize $I_3$};
\node at (3.98,1.8) {$I$};
\node at (3.98,1) {$N_{CRr^{-1/2}}(I)$};
\node at (9,5) {$I_j$ are smaller tubes.};
\node at (9,4) {$I$ is a larger tube, that $I\sim I_j$.};

\end{tikzpicture}
\caption{Possible relations between tubes at different scales.}
\label{figure-2}
\end{figure}
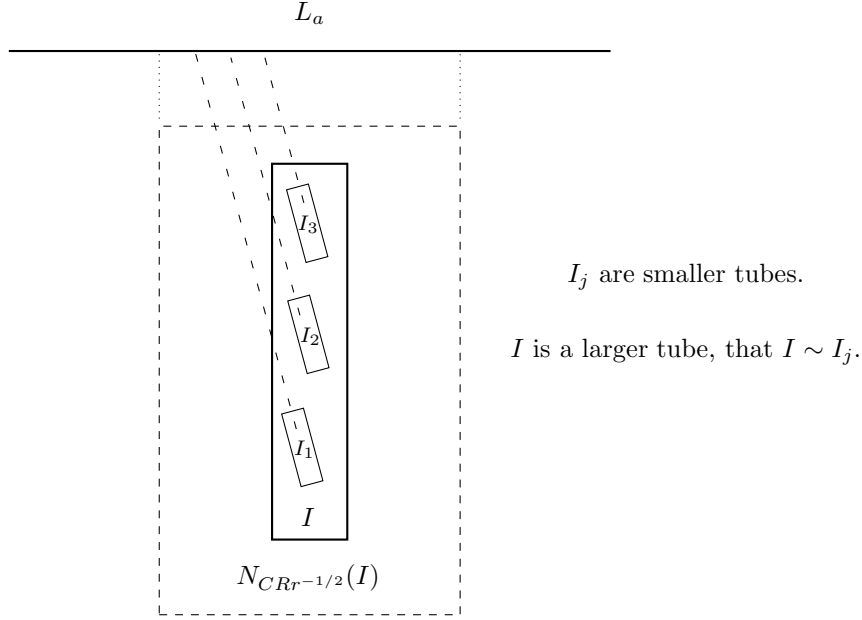

\vspace{3mm}
For each $O\in\co^{s-1}$, we define a sum of wave packets $g_{O,tang}$ as  
\begin{equation}
    g_{O,tang}=\sum_{I_{s-1}\in  \ZI_{v_{s-1},O}}g_{I_{s-1}}.
\end{equation}
Then the arguments above can be concluded into two estimates, which we will use in our backward algorithm. The first one is 
\begin{equation}
\label{backward-l2-1}
    \sum_{O\in\co^{s-1}}\sum_{F_j\in\cf_O}\|g_{j,tang}\|_2^2\lesssim d^{-1} v_{s-1} \sum_{O\in\co^{s-1}}\|g_{O,tang}\|_2^2.
\end{equation}
The second one is, recalling \eqref{plane-tube-incidence},
\begin{equation}
\label{backward-l1-1}
    \big|N_{Rr_{s-1}^{-1/2}}L_{a,O}\big|\lesssim R^{O(\de)}R^2r^{-1/2}d^{3}v_{s-1}^{-1}. 
\end{equation}

${\bf 2.~Trans:}$ Suppose that at the step $s-1$, we were in the transverse case. Then for any $O_{s-1}\in\co^{s-1}$, there are $O(N_{s-1})$ many sets $F_j\in\cf$ that are the offspring of $O_{s-1}$. The number $N_{s-1}$ was defined in Lemma \ref{one-step-partitioning}. Similar to the cell case above, for any $O\in\co^{s-1}$, we can define a sum of wave packets $g_{O,tang}$ as  
\begin{equation}
    g_{O,tang}=\sum_{I_{s-1}\in  \ZI_{O,tang}}g_{I_{s-1}},
\end{equation}
where $\ZI_{O,tang}$ is the collection of bigger tubes of dimensions $\sim r_{s-1}^{1/2}\times r_{s-1}^{1/2}\times r_{s-1}$ defined as follows: For any bigger tube $I_{s-1}\in\ZI_{O,tang}$, there is at least one smaller tube $I_s\in\ZI_{j,tang}$ among all the offspring $F_j$ of $O$, such that $I_{s-1}\sim I_s$. Also, if we define the set
\begin{equation}
    L_{a,O}:=L_a\bigcap\big\{\cup_{\mr{I}\in\mr{\ZI}_{O,tang}}\mathring{I}\big\},
\end{equation}
we will have two estimates:
\begin{equation}
\label{backward-l2-2}
    \sum_{O\in\co^{s-1}}\sum_{F_j\in\cf_O}\|g_{j,tang}\|_2^2\lesssim {\rm Poly}(d)R^{-\de} \sum_{O\in\co^{s-1}}\|g_{O,tang}\|_2^2,
\end{equation}
and
\begin{equation}
\label{backward-l1-2}
    \big|N_{Rr_{s-1}^{-1/2}}L_{a,O}\big|\lesssim R^{O(\de)}R^2r^{-1/2}N_{s-1}. 
\end{equation}
The first estimate follows from a similar argument as the proof in Lemma \ref{transverse-lemma1}.

\subsection{General backward algorithm}\hfill

Generally, suppose that we are in the step $u\geq1$ of the iterated polynomial partitioning algorithm. We do the following:
\begin{enumerate}
    \item[{\bf Cell: }] If we have encountered the cell case in the step $u-1$, then we can find a dyadic number $v_{u-1}=O(d)$, so that for each cell $O\in\co^{u-1}$, we can define a collection of tubes $\ZI_{v_{u-1},O}$, a sum of wave packets $g_{O,tang}$, and a set
    \begin{equation}
        L_{a,O}:=L_a\bigcap\big\{\cup_{\mr{I}\in\mr{\ZI}_{v_{u-1},O}}\mathring{I}\big\}.
    \end{equation}
    Also, for each $O\in\co^{u-1}$, we define $\co^u_O\subset \co^u$ as the collection of all children of the set $O$. As we explained after the inequality \eqref{l2-backward-not-pigeonholing}, we have two estimates. The first one is
    \begin{equation}
    \label{backward-l2-3}
        \sum_{O\in\co^{u-1}}\sum_{O'\in\co^u_O}\|g_{O',tang}\|_2^2\lesssim d^{-1} v_{u-1} \sum_{O\in\co^{u-1}}\|g_{O,tang}\|_2^2.
    \end{equation}
    The second one is, for each $O\in\co^{u-1}$,
    \begin{equation}
    \label{backward-l1-3}
        \big|N_{Rr_{u-1}^{-1/2}}L_{a,O}\big|\lesssim d^{3}v_{u-1}^{-1}\max_{O'\in\co^u_O}|N_{Rr_{u}^{-1/2}}L_{a,O'}|. 
    \end{equation}
    \item[{\bf Trans: }] If we have encountered the transverse case in the step $u-1$, then similarly for each $O\in\co^{u-1}$, we can define a collection of tubes $\ZI_{O,tang}$, a sum of wave packets $g_{O,tang}$, a set
    \begin{equation}
        L_{a,O}:=L_a\bigcap\big\{\cup_{\mr{I}\in\mr{\ZI}_{O,tang}}\mathring{I}\big\},
    \end{equation}
    and a children set $\co^u_O$, so that 
    \begin{equation}
    \label{backward-l2-4}
        \sum_{O\in\co^{u-1}}\sum_{O'\in\co^u_O}\|g_{O',tang}\|_2^2\lesssim {\rm Poly}(d)R^{-\de} \sum_{O\in\co^{u-1}}\|g_{O,tang}\|_2^2,
    \end{equation}
    and for each $O\in\co^{u-1}$,
    \begin{equation}
    \label{backward-l1-4}
        \big|N_{Rr_{u-1}^{-1/2}}L_{a,O}\big|\lesssim N_{u-1}\max_{O'\in\co^u_O}\big|N_{Rr_{u}^{-1/2}}L_{a,O'}\big|. 
    \end{equation}
\end{enumerate}

We will use the convention $\co^0=\{B_R\}$, meaning that the largest set we would have in the backward algorithm is the ball $B_R$. In accordance to the inductive formula \eqref{tangent-inductive-2}, we define $f_{B_R,tang}$ to be the function $g_{B_R,tang}$, which only appears at the last step of the backward algorithm.

\vspace{3mm}

When the backward algorithm stops, we have a collection of tubes $\ZT_{B_R,tang}$, such that for each tube $T\in\ZT_{B_R,tang}$, the corresponding wave packet $f_T$ is summed in the function $f_{B_R,tang}$. We define
\begin{equation}
    v=\prod_{u}v_u,
\end{equation} 
be the product of $v_{u}$, for all the cell steps $u$. As a result, we can conclude from all the estimates in our backward algorithm, \eqref{backward-l2-1}, \eqref{backward-l2-2}, \eqref{backward-l2-3}, \eqref{backward-l2-4} and \eqref{backward-l1-1}, \eqref{backward-l1-2}, \eqref{backward-l1-3}, \eqref{backward-l1-4}, to obtain two estimates. The first one is
\begin{equation}
\label{backward-l2-final}
    \sum_{F_j\in\cf}\|g_{j,tang}\|_2^2\lesssim R^{O(\de)}d^{-s_c}R^{-s_t\de}v\sum_{T\in\ZT_{B_R,tang}}\|f_T\|_2^2.
\end{equation}
The second one is that, recalling \eqref{captical-f}, uniformly for all $|a|\leq 2R$,
\begin{equation}
\label{backward-l1-final}
    |N_{R^{1/2}}L_{a,B_R}|\lesssim R^{O(\de)}R^2r^{-1/2}|\cf|v^{-1},
\end{equation}
where
\begin{equation}
    L_{a,B_R}:=L_a\bigcap\big\{\cup_{T\in\ZT_{B_R,tang}}T\big\}.
\end{equation}

\subsection{Concluding the proof of Theorem \ref{broad-theorem}}\hfill

Finally, we will combine  \eqref{backward-l2-final}, \eqref{backward-l1-final} and the Littlewood-Paley theorem for translated cubes to conclude \eqref{broad-estimate}. For any $\theta\in\Theta$, we define 
\begin{equation}
    \ZT_{\theta,B_R,tang}:=\{T:T\in\ZT_{B_R,tang},~T{~\rm~dual~to~}\theta\}.
\end{equation}
For convenience, we will use $\ZT_\theta$ in short of $\ZT_{\theta,B_R,tang}$ in this subsection. As a consequence, we have 
\begin{equation}
    \sum_{T\in\ZT_{B_R,tang}}\|f_T\|_2^2\lesssim\sum_{\theta\in\Theta}\int|f_\theta|^2\Big(\sum_{T\in\ZT_{\theta}}\Id_T^\ast\Big).
\end{equation}

Let $B_\theta$ be an $R^{-1/2}$ cube containing $2\theta$ and let $\wh\vp_{B_\theta}$ be a smooth function supported in $2B_{\theta}$ with $\wh\vp_{B_\theta}(\xi)=1$ on $B_\theta$. If we set $f_{B_\theta}=\vp_{B_\theta}\ast f$, we get trivially $f_\theta=\vp_\theta\ast f_{B_\theta}$, implying that
\begin{equation}
    \sum_{\theta\in\Theta}\int|f_\theta|^2\Big(\sum_{T\in\ZT_{\theta}}\Id_T^\ast\Big)=\sum_{\theta\in\Theta}\int|\vp_\theta\ast f_{B_\theta}|^2\Big(\sum_{T\in\ZT_{\theta}}\Id_T^\ast\Big).
\end{equation}
By H\"older's inequality, $|\vp_\theta\ast f_{B_\theta}|^2\lesssim |\vp_\theta|\ast|f_{B_\theta}|^2$, which gives 
\begin{equation}
\label{sq-fcn-1}
    \sum_{\theta\in\Theta}\int|f_\theta|^2\Big(\sum_{T\in\ZT_{\theta}}\Id_T^\ast\Big)\lesssim \sum_{\theta\in\Theta}\int|f_{B_\theta}(y)|^2|\vp_\theta(x-y)|\Big(\sum_{T\in\ZT_{\theta}}\Id_T^\ast(x)\Big)dxdy.
\end{equation}

Let $T_\theta^0$ be the rectangular tube dual to $\theta$, centered at the origin. We introduce the Nikodym maximal function $Mf$,
\begin{equation}
    Mf(y)=\sup_{\theta\in\Theta}\frac{1}{|T_\theta^0|}\int\Id_{T_\theta^0}(x-y)f(x) dx,
\end{equation}
and the smooth Nikodym maximal function $M_sf$,
\begin{equation}
    M_sf(y)=\sup_{\theta\in\Theta}\int\vp_\theta(x-y)f(x) dx.
\end{equation}
Then, if we define the set $X$ as
\begin{equation}
    X=\bigcup_{\theta\in\Theta}\bigcup_{T\in\ZT_\theta}CT,
\end{equation}
from \eqref{sq-fcn-1} we have
\begin{equation}
    \sum_{\theta\in\Theta}\int|f_\theta|^2\Big(\sum_{T\in\ZT_{\theta}}\Id_T^\ast\Big)\lesssim \int\Big(\sum_{\theta\in\Theta}|f_{B_\theta}|^2\Big)M_s\Id_X.
\end{equation}
We invoke H\"older's inequality and the Littlewood-Paley theorem for translated cubes, Theorem \ref{L-P-cubes}, so that for $q$ being the H\"older's conjugate of $p/2$,
\begin{equation}
    \sum_{\theta\in\Theta}\int|f_\theta|^2\Big(\sum_{T\in\ZT_{\theta}}\Id_T^\ast\Big)\lesssim\Big\|\sum_{\theta\in\Theta}|f_{B_\theta}|^2\Big\|_{p/2}\|M_s\Id_X\|_q\lesssim\|f\|_p^2\|M_s\Id_X\|_q.
\end{equation}

\vspace{3mm}

Now we are in a position to apply \eqref{backward-l1-final}. Since $M\Id_X\leq1$, and since $M\Id_X$ is supported in $B_{2R}$, we use the support estimate \eqref{backward-l1-final} to obtain
\begin{equation}
    \|M\Id_X\|_q^q\lesssim R^{O(\de)}R^3\min\{1,r^{-1/2}|\cf|v^{-1}\}.
\end{equation}
Note that $\|M_s\Id_X\|_q\leq C_\e R^\be\|M\Id_X\|_q$. We combine this estimate with the two inequalities above so that
\begin{equation}
    \sum_{\theta\in\Theta}\int|f_\theta|^2\Big(\sum_{T\in\ZT_{\theta}}\Id_T^\ast\Big)\leq C_\e R^{O(\de)}(R^3\min\{1,r^{-1/2}|\cf|v^{-1}\})^{1/q}\|f\|_p^2.
\end{equation}
Plugging this back to \eqref{backward-l2-final}, we finally arrive at
\begin{equation}
\label{final-l2}
    \sum_{F_j\in\cf}\|g_{j,tang}\|_2^2\leq C_\e R^{O(\de)}d^{-s_c}R^{-s_t\de}v(R^3\min\{1,r^{-1/2}|\cf|v^{-1}\})^{1/q}\|f\|_p^2.
\end{equation}

Combining the fact $r=Rd^{-s_c}R^{-s_t\de}$, estimate \eqref{broad-tangent-square-fcn}, the fact that the quantities $\|\Id_{F_j\cap W}{\rm Bil}(g_{j,tang})\|_p$ are the same up to a constant factor,  estimate \eqref{cordoba-type-lemma}, estimate \eqref{final-l2} and a simple pigeonholing argument, we can conclude that 
\begin{equation}
\label{final-broad}
    \int_{B_R}|{\rm Br}_\al Sf|^p\lesssim_\e R^{O(\de)}|\cf|^\frac{2-p}{2}(d^{s_c}R^{s_t\de})^{p-3}v^\frac{p}{2}\min\{r^{1/2},|\cf|v^{-1}\})^\frac{p-2}{2}\|f\|_p^p.
\end{equation}

\vspace{3mm}
Our final task is to optimize \eqref{final-broad}. We expand the minimal function by considering two separate cases.
\subsubsection{Case $r^{1/2}>|\cf|v^{-1}$}\hfill

We rewrite the estimate \eqref{final-broad} as
\begin{equation}
\label{case-1-broad}
    \int_{B_R}|{\rm Br}_\al Sf|^p\leq C_\e R^{O(\de)}(d^{s_c}R^{s_t\de})^{p-3}v\|f\|_p^p.
\end{equation}
Recall that $|\cf|\lesssim d^{3s_c}$ and $v\lesssim d^{s_c}$. The constraint $r^{1/2}>|\cf|v^{-1}$ implies 
\begin{equation}
    R^{s_t\de}d^{7s_c}v^{-2}\leq R.
\end{equation}
We optimize \eqref{case-1-broad} by taking $v\sim d^{s_c}$, so that for $p\geq 3.25$,
\begin{equation}
    \int_{B_R}|{\rm Br}_\al Sf|^p\leq C_\e R^{\e}R^{p-3}\|f\|_p^p.
\end{equation}
This is \eqref{broad-estimate}.  \qed

\subsubsection{Case $r^{1/2}\leq|\cf|v^{-1}$}\hfill

We rewrite the estimate \eqref{final-broad} as
\begin{equation}
\label{case-2-broad}
    \int_{B_R}|{\rm Br}_\al Sf|^p\leq C_\e R^{O(\de)}|\cf|^\frac{2-p}{2}(d^{s_c}R^{s_t\de})^{p-3}v^\frac{p}{2}r^\frac{p-2}{4}\|f\|_p^p.
\end{equation}
We can similarly optimize the above inequalities by taking $v\sim d^{s_c}$ to conclude that for $p\geq3.25$, 
\begin{equation}
    \int_{B_R}|{\rm Br}_\al Sf|^p\leq C_\e R^{\e}R^{p-3}\|f\|_p^p. 
\end{equation}
This is again \eqref{broad-estimate}. \qed

\vspace{0.6cm}

\end{document}